\documentclass{amsart}

\usepackage{xypic,hyperref,todonotes,comment}
\usepackage{graphics,amssymb,amsfonts,amscd,verbatim}
\usepackage{pagecolor,lipsum}
\usepackage[all]{xy}
\usepackage{latexsym}
\usepackage{mathrsfs,mathtools,stmaryrd,tikz-cd}
\xyoption{curve}
\usepackage{hyperref}
\hypersetup{ 
colorlinks,
citecolor=blue,
filecolor=black,
linkcolor=black,
urlcolor=green
}

\newtheorem{theorem}{Theorem}[section]
\newtheorem{lemma}[theorem]{Lemma}
\newtheorem{proposition}[theorem]{Proposition}
\newtheorem{corollary}[theorem]{Corollary}
\theoremstyle{definition}

\newtheorem{example}[theorem]{Example}
\newtheorem{question}[theorem]{Question}

\newtheorem{remark}[theorem]{Remark}

\numberwithin{equation}{section}

\DeclareMathOperator{\id}{id}
\DeclareMathOperator{\Hom}{Hom}
\DeclareMathOperator{\End}{End}

\DeclareMathOperator{\Fun}{Fun}

\DeclareMathOperator{\Rep}{Rep}

\DeclareMathOperator{\Sym}{Sym}

\newcommand{\ot}{\otimes}
\newcommand{\Ver}{\mathscr{V}\!er}
\renewcommand{\1}{\mathbf{1}}
\renewcommand{\O}{\mathscr{O}}
\newcommand{\C}{\mathscr{C}}
\newcommand{\Z}{\mathscr{Z}}

\title[On the Drinfeld center of the Verlinde category $\Ver_p$]{On the Drinfeld center of the Verlinde category}
\date{\today}

\author{Shlomo Gelaki}
\address{Department of Mathematics, Iowa State University, Ames, IA 50011, USA} 
\email{gelaki@iastate.edu}
\author{Victor Ostrik}
\address{Department of Mathematics, University of Oregon, Eugene, OR 97403, USA}
\email{vostrik@uoregon.edu}
\begin{document}

\maketitle

\tableofcontents

\begin{abstract}
We provide some information about the Drinfeld center $\Z(\Ver_p)$ of the Verlinde category $\Ver_p$.
We compute explicitly the Cartan matrix, bound quiver and cohomology of $\Z(\Ver_p^+)$, and prove that the category $\mathscr{Z}(\Ver_p^+)$ is wild for every $p\ge 7$. In the special case $p=5$, we show that $\Z(\Ver_5^+)$ has exactly $10$ non-isomorphic indecomposable objects, classify them and describe the Green ring of $\Z(\Ver_5^+)$, and compute the semisimplification of $\Z(\Ver_5^+)$.
\end{abstract}

\section{Introduction}\label{sec:Introduction}

The goal of this paper is to collect some results about the Drinfeld center of the Verlinde category $\Ver_p$.
Recall that the Verlinde category (first appearing in the work of Gelfand and Kazhdan \cite{GK} and
Georgiev and Mathieu \cite{GM}) is a symmetric fusion category over an algebraically closed field $\mathbf{k}$ of positive characteristic $p>0$;
this category plays an important role in the general theory of symmetric tensor categories, for instance 
it is proved in \cite{CEO} that any semisimple symmetric tensor category of moderate growth admits
a symmetric tensor functor to $\Ver_p$. We recall (see 
\S\ref{sec:The category Verp}) that for 
$p>2$, the fusion category $\Ver_p$ has a fusion subcategory $\Ver_p^+$, such that $\Ver_p=\Ver_p^+\boxtimes sVect$,
where $sVect$ is the category of finite dimensional super vector spaces over $\mathbf{k}$.

The Drinfeld center (see e.g. \cite[7.13]{EGNO}) is a well known construction, which produces a braided tensor
category $\mathscr{Z}(\C)$ starting from an arbitrary tensor category $\C$. It is easy to see that
$\Z(\Ver_p)=\Z(\Ver_p^+)\boxtimes \Z(sVect)$, so that the computation of $\Z(\Ver_p)$ reduces to the 
computation of $\Z(\Ver_p^+)$. 

Since the fusion category $\Ver_p^+$ is symmetric, we have an injective tensor functor
$\Ver_p^+\xrightarrow{1:1} \Z(\Ver_p^+)$. In Theorem \ref{main result} we show that the Drinfeld center $\Z(\Ver_p^+)$ has the following unusual property:
Any simple object of $\Z(\Ver_p^+)$ is contained in $\Ver_p^+\subset \Z(\Ver_p^+)$. In particular,
the category $\Z(\Ver_p^+)$ has the Chevalley property; that is, the semisimple objects are closed under
the tensor product. 
Thus, one can think about the category $\Z(\Ver_p^+)$ as a ``non-semisimple fattening'' of the category
$\Ver_p^+$. In particular, the square of the braiding is unipotent in the category $\Z(\Ver_p^+)$ (see Theorem \ref{main result});
this is an unusual property for a factorizable (see \cite[8.6]{EGNO}) braided tensor category such as $\Z(\Ver_p^+)$.

The focus of this paper is to understand the ``non-semisimple'' nature of the category $\Z(\Ver_p^+)$.
Here are our main results:

In Theorem \ref{mainthm} we compute the Cartan matrix of $\Z(\Ver_p^+)$, and in Theorem \ref{idealofrelations} we compute the {\bf bound quiver} of the category $\Z(\Ver_p^+)$. In Theorem \ref{cohomology ring} we compute the cohomology; that is, Ext-algebra of the unit object of the category $\Z(\Ver_p^+)$ (this computation and its generalization was independently obtained by A. Davydov and P. Etingof). It is interesting that the result does not depend on $p\ge 5$ (and the case $p=3$
is trivial since the category $\Z(\Ver_3^+)$ is equivalent to the category of finite dimensional vector spaces).
In Theorem \ref{reptype7} we prove that the category $\mathscr{Z}(\Ver_p^+)$ is wild for every $p\ge 7$.

In Corollary \ref{DB} we show that $\Z(\Ver_5^+)$ has exactly $10$ non-isomorphic indecomposable objects; we describe them in Theorem \ref{complete list} and the Green ring of $\Z(\Ver_5^+)$ in \S\ref{sec:The Green ring}, and compute the ``semisimplification'' of $\Z(\Ver_5^+)$ in Theorem \ref{semisimp}. Even though the category
$\Z(\Ver_p^+)$ is wild for $p\ge 7$, still it might be interesting to compute the semisimplifications of some pieces
of this category.

\subsection{Acknowledgements} 
We are very grateful to Cris Negron, Dmitry Nikshych, Julia Plavnik, and Siddharth Venkatesh
for their participation in an early stage of this project. We are also very grateful to Pavel Etingof (and his ChatGPT)
and Alexander Kleshchev for helpful discussions.

\section{Preliminaries}\label{sec:Preliminaries}

Throughout $\mathbf{k}$ is an algebraically closed field of characteristic $p>0$.

\subsection{Bound quivers associated to algebra objects}\label{sec:Quivers associated to algebra objects}
Let $\C$ be a fusion category over $\mathbf{k}$, and let $\mathcal{O}(\C)$ be a complete set of representatives of the simple objects of
$\C$. Let $\Fun(\C,\C)$
be the tensor category of $\mathbf{k}$-linear endofunctors of $\C$, with tensor
product given by composition. 

Consider the tensor functor
\begin{equation}\label{thefunF}
F:\C\rightarrow\Fun(\C,\C),
\quad
X\mapsto X\otimes-.
\end{equation}
Choose a generator
\[
P:=\bigoplus_{L\in\mathcal{O}(\C)}L,
\]
and let
\[
S:=\End(P).
\]
Since $\C$ is semisimple, $S\cong \mathbf{k}^n$, where $n:=|\mathcal{O}(\C)|$.

Recall that evaluation on $P$ induces an equivalence of tensor
categories
\begin{equation}\label{theequivF}
\Fun(\C,\C)\simeq {\rm Bimod}(S),
\end{equation}
where ${\rm Bimod}(S)$ is the category of finite dimensional
$S$-bimodules.
Since \(S\cong \mathbf{k}^n\), every $S$-bimodule $M$ decomposes as
\[
M=\bigoplus_{i,j}e_iMe_j,
\]
where \(e_1,\dots,e_n\) are the primitive idempotents of \(S\).
Thus, an \(S\)-bimodule is simply a collection of vector spaces indexed by
ordered pairs of simple objects.

It follows (see e.g. \cite[2.1]{MOV}) that associative algebra objects $A$ in ${\rm Bimod}(S)$ are therefore equivalent to finite
bound quivers (= quivers with relations); that is, the vector space
$e_iAe_j$ is the space of arrows from vertex \(j\) to vertex \(i\), and the
multiplication of \(A\) determines the relations.

\begin{proposition}
\label{prop:quiver-monad}
Let $A\in \C$ be an associative algebra.
Then the associative algebra
$F(A)\in\Fun(\C,\C)$ 
corresponds, under the equivalence (\ref{theequivF})
\[
\Fun(\C,\C)\simeq {\rm Bimod}(S),
\]
to the bound quiver describing the category ${\rm Mod}_{\C}(A)$ of left
\(A\)-modules in \(\C\).
\end{proposition}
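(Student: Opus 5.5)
The plan is to identify ${\rm Mod}_{\C}(A)$ with the category of modules over the finite dimensional algebra $B:=\End_A(A\otimes P)^{\rm op}$, and then to show that $B$ is exactly the algebra that $F(A)$ becomes under (\ref{theequivF}).

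\textbf{Step 1: a projective generator.} Let $P=\bigoplus_{L\in\mathcal{O}(\C)}L$ as above. The free module functor $X\mapsto A\otimes X$ is left adjoint to the forgetful functor ${\rm Mod}_{\C}(A)\to\C$. Since $\C$ is semisimple, the forgetful functor is exact, so every free module $A\otimes X$ is projective. Every $A$-module $M$ is a quotient of $A\otimes M$ via the action map, and $M$ is a direct summand of a multiple of $P$. Hence $A\otimes P$ is a projective generator. By standard Morita theory, $\Hom_A(A\otimes P,-)$ therefore gives an equivalence ${\rm Mod}_{\C}(A)\simeq {\rm Mod}(B)$.

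\textbf{Step 2: compute $B$ and match it with $F(A)$.} The adjunction gives
\[
\Hom_A(A\otimes P,A\otimes P)\cong\Hom_{\C}(P,A\otimes P)=\bigoplus_{i,j}\Hom_{\C}(L_j,A\otimes L_i).
\]
Under (\ref{theequivF}), the functor $F(A)=A\otimes-$ goes to the $S$-bimodule $\Hom_{\C}(P,A\otimes P)$. Its $S$-actions come from $\End(P)$ acting by pre- and post-composition, so $e_iF(A)e_j$ is a space of the form $\Hom(L_i,A\otimes L_j)$ (the exact indexing convention has to be checked). The algebra structure of $F(A)$ as a monad is built from $m\otimes\id\colon A\otimes A\otimes X\to A\otimes X$. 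Transported to ${\rm Bimod}(S)$, the product $F(A)\otimes_S F(A)\to F(A)$ sends $f\otimes g$ to $(m\otimes\id)\circ(\id_A\otimes f)\circ g$. This is exactly the composition of the corresponding $A$-module endomorphisms of $A\otimes P$ under the adjunction, since the endomorphism attached to $f$ is $(m\otimes\id)(\id_A\otimes f)$. So $F(A)$, viewed as an algebra in ${\rm Bimod}(S)$, which is the same as an $S$-ring, equals $\End_A(A\otimes P)$ or its opposite, with $S$ embedded through the idempotents $e_i$ projecting $P$ to $L_i$.

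\textbf{Step 3: the bound quiver.} An $S$-ring with $S\cong\mathbf{k}^n$ is the path algebra modulo relations of the quiver whose arrows are read from the generating bimodule $\mathrm{rad}/\mathrm{rad}^2$, as in \cite[2.1]{MOV}. Combining this with Steps 1 and 2, $F(A)$ corresponds to the bound quiver of ${\rm Mod}_{\C}(A)$. The vertices are the simple $A$-modules, which correspond to the indecomposable projectives, namely the summands of the $A\otimes L_i$.

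\textbf{Main obstacle.} The delicate point is the bookkeeping, not any single argument. Concretely:
\begin{itemize}
\item getting the op-conventions and the $(i,j)$ versus $(j,i)$ orientation consistent between the monad structure and endomorphism composition;
\item checking that (\ref{theequivF}) is monoidal in the right direction, i.e.\ composition of functors goes to $\otimes_S$ in the correct order;
\item handling the case where $A\otimes L_i$ is not indecomposable or several $A\otimes L_i$ are isomorphic. Then the basic algebra differs from $B$, and the statement must be read up to Morita equivalence, meaning one passes to the basic algebra, i.e.\ the quiver of $B$.
\end{itemize}
I would address these by first verifying the orientation on a single arrow, a map $L_j\to A\otimes L_i$, and then fixing the convention once for the rest of the argument.
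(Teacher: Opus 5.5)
Your argument is correct, but it takes a different route from the paper.

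\textbf{The two routes.} The paper never picks a projective generator. It notes that ${\rm Mod}_{\C}(A)$ is by definition the category of modules (Eilenberg--Moore algebras) over the monad $A\otimes-$. Under $\C\simeq{\rm Mod}(S)$, a monad on $\C$ is the same as an algebra $R$ in ${\rm Bimod}(S)$, and its monad-modules are just $R$-modules. So ${\rm Mod}_{\C}(A)\simeq{\rm Mod}(F(A))$ is essentially tautological. You work on the free-module side instead: $A\otimes P$ is a projective generator, Morita theory gives ${\rm Mod}(B)$, and the free--forgetful adjunction identifies $B$ with $\Hom_{\C}(P,A\otimes P)$ as an algebra.

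\textbf{Your orientation question.} Suppose composition $G\circ G'$ corresponds to $f\otimes g\mapsto G(g)\circ f$ for $f\colon P\to G(P)$ and $g\colon P\to G'(P)$. Then the product of $F(A)$ is $f\cdot g=(m\otimes\id_P)(\id_A\otimes g)f$. Put $\varphi_f:=(m\otimes\id_P)(\id_A\otimes f)$. The product $f\cdot g$ then corresponds to $\varphi_g\circ\varphi_f$ in $\End_A(A\otimes P)$. Hence $F(A)\cong B=\End_A(A\otimes P)^{\rm op}$ on the nose, and $\Hom_A(A\otimes P,M)=\Hom_{\C}(P,M)$ is a left $F(A)$-module.

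\textbf{What your route buys.} The vertex idempotent $e_i$ visibly corresponds to the free module $A\otimes L_i$. So $S\to F(A)/{\rm rad}\,F(A)$ is an isomorphism exactly when the $A\otimes L_i$ are indecomposable and pairwise non-isomorphic. Only then is $F(A)$ literally a bound quiver algebra on the vertex set $\mathcal{O}(\C)$.

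The paper's phrase ``hence corresponds to a finite dimensional basic algebra'' tacitly assumes this, and it fails in general. Take the matrix algebra $A=L_3\otimes L_3^*$ in $\Ver_5^+$. There $A\otimes P$ corresponds to $\mathbf{1}\oplus 2L_3$ under ${\rm Mod}_{\C}(A)\simeq\C$, so $F(A)\cong\mathbf{k}\times M_2(\mathbf{k})$. Only the Morita-equivalent basic algebra is a bound quiver algebra, which is exactly what your third caveat anticipates. In the case the paper actually uses, $A=H$, the hypothesis holds by Theorem~\ref{main result}(3).
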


\begin{proof}
The tensor functor $F(X)=X\otimes - $ (\ref{thefunF})
identifies the algebra object \(A\) with the monad
\[
A\otimes -:\C\rightarrow \C.
\]
By definition, the category ${\rm Mod}_{\C}(A)$ of left \(A\)-modules in \(\C\) is precisely the category of
modules over this monad. Moreover, under the equivalence (\ref{theequivF})
\[
\Fun(\C,\C)\simeq {\rm Bimod}(S),
\]
the monad $F(A)\in {\rm Bimod}(S)$ is an associative algebra, hence corresponds to a finite dimensional basic algebra, which may be presented by a bound quiver, and modules over $F(A)$ are
exactly modules over the monad $A\otimes-$, i.e. left $A$-modules in $\C$.
\end{proof}

\subsection{The fundamental group}\label{sec:The fundamental group}
Let $\mathscr{C}$ be a {\bf symmetric} fusion category over $\mathbf{k}$, and let $\mathcal{O}(\mathscr{C})$ be a complete set of representatives of the simple objects of
$\C$. Recall that in $\mathscr{C}$ we have a canonical finite group scheme $\pi(\mathscr{C})$ called the {\em fundamental group} \cite{D}. Namely, given a commutative algebra $A$ in ${\rm Ind}(\mathscr{C})$, let ${\rm Mod}(A)$ be the category of $A$-modules in ${\rm Ind}(\mathscr{C})$, and let 
$$F_A:\mathscr{C}\to {\rm Mod}(A),\quad X\mapsto A\ot X,$$
be the free module functor. Then the functor of points
$$\pi:{\rm CA}({\rm Ind}(\mathscr{C}))\to {\rm Grp},\quad A\mapsto \pi(A):={\rm Aut}^{\ot}(F_A),$$
is representable by the commutative Hopf algebra $\mathscr{O}(\pi(\mathscr{C}))$ in $\mathscr{C}$; that is,
$${\rm Aut}^{\ot}(F_A)=\Hom_{{\rm CA}({\rm Ind}(\mathscr{C}))}(\mathscr{O}(\pi(\mathscr{C})),A).$$
It is known that $\mathscr{O}(\pi(\mathscr{C}))=\bigoplus_{L_i\in \mathcal{O}(\mathscr{C})} L_i\ot L_i^*$ as an object of $\mathscr{C}$, equipped with the  product determined by the obvious morphisms 
$$L_i\otimes L_i^*\ot L_j\otimes L_j^*\to (L_i\otimes L_j)\otimes (L_i\otimes L_j)^*,\quad L_i,L_j\in \mathcal{O}(\mathscr{C}),$$
and coproduct determined by the obvious morphisms 
$$L_i\otimes L_i^*\to (L_i\otimes L_i^*)\otimes (L_i\otimes L_i^*),\quad L_i\in \mathcal{O}(\mathscr{C}).$$
In particular, we have
\begin{equation}\label{fpdimreg}
{\rm FPdim}(\mathscr{O}(\pi(\mathscr{C})))={\rm FPdim}(\mathscr{C}).
\end{equation}

Recall that for each $X\in \mathscr{C}$, the functor of points
$${\rm GL}(X):{\rm CA}({\rm Ind}(\mathscr{C}))\to {\rm Grp},\quad A\mapsto {\rm GL}(X)(A):={\rm Aut}_A(A\ot X),$$
is representable by the commutative Hopf algebra $\mathscr{O}({\rm GL}(X))$ in ${\rm Ind}(\mathscr{C})$. 
Let 
\begin{equation}\label{canac}
\epsilon:\pi\to {\rm GL}(X)
\end{equation}
be the natural transformation defined for $A\in {\rm CA}({\rm Ind}(\mathscr{C}))$ by 
$$\epsilon_A:{\rm Aut}^{\ot}(F_A)\to {\rm Aut}_A(A\ot X),\quad \theta\mapsto \theta_X.$$ 
Then $\epsilon$ is a morphism of affine algebraic group schemes in ${\rm Ind}(\mathscr{C})$, referred to as the {\em natural action of $\pi(\mathscr{C})$ on $X$}.

More generally, let $G$ be a finite group scheme in $\mathscr{C}$, equipped with a group scheme homomorphism $F:\pi(\mathscr{C})\to G$. Let 
$$\mathscr{E}:=\Rep_{\mathscr{C}}(G,F)={\rm Corep}(\O(G),F)_{\mathscr{C}}$$
be the finite symmetric tensor category of left representations of $G$ (equivalently, right $\O(G)$-corepresentations) in $\mathscr{C}$, such that the restriction to $\pi(\mathscr{C})$ via $F$ is the same as the natural action of $\pi(\mathscr{C})$. As $\O(G)$ is a commutative Hopf algebra in $\mathscr{C}$, it is known that the category ${\rm Mod}_{\mathscr{E}}(\O(G))$ of left $\O(G)$-modules in $\mathscr{E}$ forms a finite braided category, with tensor product induced by the tensor product in $\mathscr{E}$ and the Hopf structure on $\O(G)$.
By definition, an object of ${\rm Mod}_{\mathscr{E}}(\O(G))$ is a triple $(X,\rho_X,\mu_X)$, where $X$ is an object of $\mathscr{C}$, equipped with compatible right coaction $\rho_X:X\to X\ot \O(G)$ and left action $\mu_X:\O(G)\ot X\to X$ morphisms in $\mathscr{C}$.

Let $\mathscr{Z}(\mathscr{E})$ be the Drinfeld center of $\mathscr{E}$.

\begin{proposition}\label{etsn}\cite{ES,LZ}
Fix a pair $(G,F)$, and let $\mathscr{E}:=\Rep_{\mathscr{C}}(G,F)$ be as above. 
There is a natural braided equivalence 
$$E:{\rm Mod}_{\mathscr{E}}(\O(G))\simeq \mathscr{Z}(\mathscr{E}).$$
In particular, there is a natural braided equivalence
$${\rm Mod}_{\mathscr{C}}(\O(\pi(\mathscr{C})))\simeq \mathscr{Z}(\mathscr{C}).$$
\end{proposition}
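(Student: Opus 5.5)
We would prove the statement by constructing the functor $E$ directly. Then we reduce the equivalence claim to the classical fact that the Drinfeld center of the category of comodules over a Hopf algebra is the category of Yetter--Drinfeld modules. Here everything is internal to $\mathscr{C}$, and the relevant Hopf algebra $\O(G)$ is commutative, so it has trivial adjoint-type twisting on one side.

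\textbf{Construction of $E$.} Take a triple $(X,\rho_X,\mu_X)$. The coaction $\rho_X$ makes $X$ an object of $\mathscr{E}$; by the compatibility assumption, the restriction along $F$ is the natural $\pi(\mathscr{C})$-action. We define a half-braiding $c_{Y,X}:Y\ot X\to X\ot Y$ for $Y\in\mathscr{E}$ by
\[
Y\ot X\xrightarrow{\;\rho_Y\ot\id\;} Y\ot\O(G)\ot X\xrightarrow{\;\id\ot\mu_X\;}Y\ot X\xrightarrow{\;\sigma\;}X\ot Y,
\]
where $\sigma$ is the symmetric braiding of $\mathscr{C}$. Thus $G$ ``acts'' on $X$ through the $\O(G)$-module structure, parametrized by the coaction on $Y$.

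\textbf{Checks on $E$.} We verify the following, in order.
\begin{enumerate}
\item $c_{Y,X}$ is a morphism in $\mathscr{E}$. This is exactly the compatibility between $\rho_X$ and $\mu_X$, which is a Yetter--Drinfeld condition involving the coadjoint coaction. Commutativity of $\O(G)$, together with the symmetry of $\mathscr{C}$, should reduce it to $\mu_X$ being $G$-equivariant for the conjugation action.
\item The hexagon $c_{Y\ot Z,X}=(c_{Y,X}\ot\id)(\id\ot c_{Z,X})$ holds. This follows from associativity of $\mu_X$ and the fact that the coaction on $Y\ot Z$ uses the multiplication of $\O(G)$.
\item $c_{Y,X}$ is invertible, with inverse built from the antipode.
\item $E$ is monoidal and braided. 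This uses that the action on $X\ot X'$ is defined via the coproduct of $\O(G)$.
\end{enumerate}

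\textbf{Inverse functor.} Given a half-braiding $(X,c)$, we recover $\mu_X$ by evaluating $c$ on the regular object. Concretely, we take $Y=\O(G)$ with its regular (right) coaction, interpreted as an object of $\operatorname{Ind}(\mathscr{E})$. We then set
\[
\mu_X:=(\id_X\ot\varepsilon)\circ c_{\O(G),X}\circ\sigma,
\]
using the counit $\varepsilon$. Naturality of $c$ with respect to the comodule maps $Y\to Y\ot\O(G)$, which are maps in $\operatorname{Ind}(\mathscr{E})$ after forgetting appropriately, shows that $c$ is determined by $\mu_X$. The hexagon axiom gives associativity of $\mu_X$. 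This is the standard argument that $\O(G)$ corepresents the forgetful fiber functor on $\mathscr{E}$ relative to $\mathscr{C}$.

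\textbf{Main obstacle.} The key subtlety, and the place where the hypothesis on $F$ enters, is the reconstruction step. We need $\mathscr{E}$ to be equivalent to comodules over $\O(G)$ \emph{relative to} $\mathscr{C}$, i.e. the forgetful functor $\mathscr{E}\to\mathscr{C}$ should be a central/fiber functor whose coend is $\O(G)$. The condition that the restriction to $\pi(\mathscr{C})$ is the natural action is what makes this coend equal to $\O(G)$ rather than $\O(G)\ot\O(\pi(\mathscr{C}))$-type data. The mechanism is that half-braidings against objects of $\mathscr{C}\subset\mathscr{E}$ are forced to be the symmetric braiding. We would handle this by citing the Tannakian reconstruction in \cite{ES,LZ}.

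\textbf{Special case.} For the last claim, take $G=\pi(\mathscr{C})$ and $F=\id$. Then $\Rep_{\mathscr{C}}(\pi(\mathscr{C}),\id)=\mathscr{C}$, since every object carries its natural action uniquely (this is Deligne's theorem \cite{D}). The equivalence then specializes to ${\rm Mod}_{\mathscr{C}}(\O(\pi(\mathscr{C})))\simeq\mathscr{Z}(\mathscr{C})$.
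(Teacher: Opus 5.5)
Your overall route is the paper's. You use the same formula for $E$ (you write the half-braiding as $Y\ot X\to X\ot Y$, which is immaterial), build the inverse by evaluating the half-braiding at $\O(G)$ and applying $\varepsilon$, and get the special case from $\mathscr{C}=\Rep_{\mathscr{C}}(\pi(\mathscr{C}),\id)$. Your forward checks 1--4 are fine.

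\textbf{The gap is in the inverse step, which fails as written.} In the relative setting, $\O(G)$ with its regular coaction is in general not an object of ${\rm Ind}(\mathscr{E})$. Its restriction along $F$ is a translation action rather than the natural action, and a half-braiding can only be evaluated on objects of $\mathscr{E}$; ``forgetting appropriately'' does not change this.

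Concretely, take $\mathscr{C}=sVect$, $G=\pi(\mathscr{C})=\mathbb{Z}/2$, $F=\id$. Then $\mathscr{E}=sVect$ and $\O(G)=\1\oplus\1$ is purely even. So its only structure as an object of $\mathscr{E}$ is the trivial one, while the regular coaction swaps the two summands.

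Using $\O(G)$ as an object of $\mathscr{E}$ instead, which is what the paper's one-line formula does, does not help. Naturality and $c_{\1,X}=\id$ force the half-braiding of $X$ with $\O(G)\cong\1\oplus\1$ to be the symmetry, so one only recovers the trivial action. But $\mathscr{Z}(sVect)$ consists of super vector spaces with an even involution $\phi$, recorded in $c_{\1^-,X}=\sigma\circ(\id\ot\phi)$.

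The same example refutes your proposed ``mechanism''. Half-braidings against objects of $\mathscr{C}$ are not forced to be the symmetry, since $\phi=-\id$ is allowed. Moreover, $\mathscr{C}$ does not in general sit inside $\mathscr{E}$; only the restriction functor $\mathscr{E}\to\mathscr{C}$ along $F$ is available.

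\textbf{The missing idea.} Read off $\mu_X$ from all components of the half-braiding at once, through the coend $\mathcal{L}=\int^{Y\in\mathscr{E}}Y^*\ot Y$. The maps
\[
Y^*\ot Y\ot X\xrightarrow{\id\ot c_{Y,X}}Y^*\ot X\ot Y\xrightarrow{\sigma\ot\id}X\ot Y^*\ot Y\xrightarrow{\id\ot{\rm ev}_Y}X
\]
are dinatural in $Y$ and define $\mathcal{L}\ot X\to X$. The hexagon axiom gives associativity for the product on $\mathcal{L}$ induced by $(Y^*\ot Y)\ot(Z^*\ot Z)\to(Y\ot Z)^*\ot(Y\ot Z)$. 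Naturality shows that $c$ is recovered from this action. In the example above, the $\1^-$-component returns exactly $\phi$.

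What remains is to identify $\mathcal{L}\cong\O(G)$ as algebras in $\mathscr{E}$, with $\O(G)$ carrying the conjugation coaction. This is where the compatibility of $F$ with the natural action of $\pi(\mathscr{C})$ is really used; it is also what makes $\O(G)$ an algebra in $\mathscr{E}$ in your check 1. For $G=\pi(\mathscr{C})$ it amounts to $\bigoplus_i L_i^*\ot L_i\cong\O(\pi(\mathscr{C}))$ with the product described in \S\ref{sec:The fundamental group}.

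This identification is the genuinely nontrivial input. The paper's sketch does not supply it either and leaves it to \cite{ES,LZ}. You should cite those references for this step, not present the evaluation at a single object as ``the standard argument''.
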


\begin{proof}
Let $(X,\rho_X,\mu_X)$ be an object of ${\rm Mod}_{\mathscr{E}}(\O(G))$. For each object $(Y,\rho_Y)\in \mathscr{E}$ with right coaction morphism $\rho_Y:Y\to Y\ot \O(G)$, define the $\mathscr{E}$-morphism 
$${\rm c}_{(X,\rho_X),(Y,\rho_Y)}:(X,\rho_X)\ot (Y,\rho_Y)\to (Y,\rho_Y)\ot (X,\rho_X)$$ 
to be the composition
$$X\ot Y\xrightarrow{\id_X\ot \rho_Y} X\ot Y\ot \O(G)\xrightarrow{c_{1,23}} Y\ot \O(G)\ot X\xrightarrow{\id_Y\ot \mu_X} Y\ot X,$$
where $c_{1,23}$ is given by the symmetry of $\mathscr{C}$. Then 
$$E(X,\rho_X,\mu_X):=((X,\rho_X),{\rm c}_{(X,\rho_X),-})$$
is an object of $\mathscr{Z}(\mathscr{E})$, so we have defined a functor 
$$E:{\rm Mod}_{\mathscr{E}}(\O(G))\to \mathscr{Z}(\mathscr{E}).$$ 

In the other direction, let $((X,\rho_X),{\rm c}_{(X,\rho_X),-})$ be an object of $\mathscr{Z}(\mathscr{E})$, so that $(X,\rho_X)$ is an object of $\mathscr{E}$ and 
$$\{{\rm c}_{(X,\rho_X),(Y,\rho_Y)}:(X,\rho_X)\ot (Y,\rho_Y)\xrightarrow{\cong} (Y,\rho_Y)\ot (X,\rho_X)\mid (Y,\rho_Y)\in \mathscr{E}\}$$
is the half-braiding. Then
$$\mu_X:\O(G)\ot X\xrightarrow{{\rm c}^{-1}_{(X,\rho_X),\O(G)}}X\ot \O(G)\xrightarrow{\id_X\ot \varepsilon}X\ot \1=X$$ 
is a left action $\mathscr{E}$-morphism, so we have defined a functor 
$$\widetilde{E}:\mathscr{Z}(\mathscr{E})\to {\rm Mod}_{\mathscr{E}}(\O(G)).$$
Moreover, $E$ and $\widetilde{E}$ are braided functors, which are inverse of each other.

Finally, the last claim follows from $\mathscr{C}=\Rep_{\mathscr{C}}(\pi(\mathscr{C}),\id)$.
\end{proof}

\subsection{The Verlinde category $\Ver_p$}\label{sec:The category Verp}
Consider the symmetric fusion category $\Ver_p$ given by the semisimplification of the symmetric tensor category ${\rm Rep}(\mathbb{Z}/p\mathbb{Z})$ over $\mathbf{k}$ \cite{O}. This category has $p-1$ simples $L_1,\dots, L_{p-1}$ and $\mathfrak{sl}(2)$-like fusion rules 
\[
L_1=\1,\ L_2\ot L_i=L_{i-1}\oplus L_{i+1}.
\]
(Here $L_0=0$.)  This category splits $\Ver_p=\Ver_p^+\boxtimes sVect$, where $\Ver_p^+$ has simples $L_i$ with $i$ odd, and $sVect$ is the copy of the symmetric fusion category of super vector spaces generated by the unit and $L_{p-1}$.  For example, $\Ver_5^+$ is the Fibonacci fusion category $\Ver_5^+=\langle \1,L_3\rangle$, where the simple $L_3$ satisfies $L_3\ot L_3=\1\oplus L_3$.

\subsection{The fundamental group of $\Ver_p^+$}\label{sec:The fundamental group of Verp+}
Recall that the Lie algebra $\mathfrak{g}\mathfrak{l}(L_2)$, and its nonzero part Lie subalgebra $\mathfrak{s}\mathfrak{l}(L_2)\subset \mathfrak{g}\mathfrak{l}(L_2)$, are defined by 
$$\mathfrak{g}\mathfrak{l}(L_2)=L_2\ot L_2=\1\oplus L_3,\quad\mathfrak{s}\mathfrak{l}(L_2)=L_3,$$
and that $\mathfrak{s}\mathfrak{l}(L_2)$ is the Lie algebra of the finite connected simple group scheme $SL(L_2)$ in $\Ver_p^+$, with 
$SL(L_2)_0={\rm Spec(\mathbf{k})}$ and $\O(SL(L_2))=(U(\mathfrak{s}\mathfrak{l}(L_2)))^*\in \Ver_p^+$ \cite{V,V2}.

Consider the symmetric algebra ${\rm Sym}(L_3)=\bigoplus_{i=0}^{\infty}S^{i}(L_3)$ in $\Ver_p^+$; it is a graded commutative Hopf algebra with $S^{0}(L_3)=\1$ and $S^{1}(L_3)=L_3$. Recall \cite{EOV,V,V2} that $S^{i}(L_3)=0$ for $i>p-3$, $S^{i}(L_3)\oplus S^{i-1}(L_3)=L_{i+1}\ot L_{i+1}$ for $1\le i\le p-2$, so that we have  
\begin{equation}\label{symmalg}
{\rm Sym}(L_3)=\bigoplus_{i=0}^{p-3}S^{i}(L_3).
\end{equation} 
In particular, $S^{p-3}(L_3)=\1$ and $S^{p-4}(L_3)=L_3$.

\begin{lemma}\label{haH}
Let $H:=\mathscr{O}(\pi(\Ver_p^+))$ be the function algebra of the fundamental group of $\Ver_p^+$. The following hold:
\begin{enumerate}
\item
${\rm FPdim}(H)={\rm FPdim}(\Ver_p^+)=\frac{1}{2}\sum_{i=1}^{p-1} {\rm FPdim}(L_i)^2 = \frac{p}{4\sin^2(\pi/p)}$.
\item
$\pi(\Ver_p^+)\cong SL(L_2)$ as group schemes in $\Ver_p^+$. In particular, $\pi(\Ver_p^+)$ is connected; that is, $H$ is a local commutative Hopf algebra in $\Ver_p^+$, where locality means that the kernel $\mathfrak{m}$ of the counit morphism $\varepsilon:H\to \1$ is the unique maximal ideal in $H$.
\item
There is an isomorphism $H\cong{\rm Sym}(L_3)$ of algebras with counit. In particular, under the identification $H={\rm Sym}(L_3)$, we have that $\mathfrak{m}=\bigoplus_{i=1}^{p-3} S^{i}(L_3)$.
\end{enumerate}
\end{lemma}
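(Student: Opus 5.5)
Part (1) comes directly from \eqref{fpdimreg}, which gives ${\rm FPdim}(H)={\rm FPdim}(\Ver_p^+)$. It remains to compute the FP dimension of $\Ver_p^+$. In $\Ver_p$ we have ${\rm FPdim}(L_i)=[i]_q=\sin(i\pi/p)/\sin(\pi/p)$, because these numbers solve the $\mathfrak{sl}(2)$ fusion recursion and are positive. The factorization $\Ver_p=\Ver_p^+\boxtimes sVect$, with ${\rm FPdim}(sVect)=2$, gives ${\rm FPdim}(\Ver_p^+)=\frac12\sum_{i=1}^{p-1}{\rm FPdim}(L_i)^2$. Finally, $\sum_{i=1}^{p-1}\sin^2(i\pi/p)=p/2$, since $\sum_i\cos(2i\pi/p)=-1$ over $1\le i\le p-1$, and this yields $\frac{p}{4\sin^2(\pi/p)}$.

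For (2), I would use the natural action $\epsilon:\pi(\Ver_p^+)\to {\rm GL}(L_2)$ from \eqref{canac}, applied in $\Ver_p$, where $L_2$ lives. The first step is to check that $\epsilon$ lands in $SL(L_2)$. The determinant-type character comes from $\wedge^2L_2=\1$, and $\pi(\Ver_p^+)$ acts on $\1$ trivially, so it acts trivially on $\wedge^2L_2$. The second step is injectivity. $L_2$ tensor-generates $\Ver_p$ (and $L_3\subset L_2\ot L_2$ generates $\Ver_p^+$), and an automorphism of the fiber functor that is trivial on a tensor generator is trivial. Hence $\epsilon$ is a closed embedding, i.e. $\O(SL(L_2))\to H$ is surjective. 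Here I am using the Tannakian formalism of \cite{D} relative to $\Ver_p^+$. The third step is to compare dimensions. ${\rm FPdim}\,\O(SL(L_2))={\rm FPdim}\,U(\mathfrak{sl}(L_2))^*$, and by the PBW theorem in $\Ver_p$ \cite{EOV,V} this equals ${\rm FPdim}\,{\rm Sym}(L_3)$. Using \eqref{symmalg} and $S^i\oplus S^{i-1}=L_{i+1}^{\ot 2}$, the alternating telescoping sum gives ${\rm FPdim}\,{\rm Sym}(L_3)=\sum_{i\ \mathrm{odd}}{\rm FPdim}(L_i)^2={\rm FPdim}(\Ver_p^+)$. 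A surjection of algebras with equal FP dimension is an isomorphism, so $\pi(\Ver_p^+)\cong SL(L_2)$. Connectedness then follows from $SL(L_2)_0={\rm Spec}(\mathbf{k})$: the coordinate ring has no nontrivial idempotents, and its reduced part is $\mathbf{k}$, so $\mathfrak m$ is nilpotent and is the unique maximal ideal.

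For (3), I would identify $\O(SL(L_2))=U(\mathfrak{sl}(L_2))^*$ with ${\rm Sym}(L_3)$ as algebras with counit. Since the group is infinitesimal (height one in the appropriate sense), the standard argument takes the filtration of $H$ by powers of $\mathfrak m$. The associated graded is a quotient of ${\rm Sym}(\mathfrak m/\mathfrak m^2)={\rm Sym}(L_3^*)={\rm Sym}(L_3)$, and it is all of ${\rm Sym}(L_3)$ by dimension. It remains to lift: choose a splitting $L_3\to\mathfrak m$ of $\mathfrak m\to\mathfrak m/\mathfrak m^2$, which exists by semisimplicity. This induces an algebra map ${\rm Sym}(L_3)\to H$ because $H$ is commutative. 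The map is surjective by Nakayama (as $\mathfrak m$ is nilpotent) and hence bijective by FP dimension. It is compatible with counits because $L_3$ maps into $\mathfrak m$, and so $\mathfrak m$ corresponds to the positive-degree part.

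The main obstacle I expect is step (2), specifically the faithfulness/closed-embedding argument and the PBW dimension count in $\Ver_p$. Everything else is formal, so I would lean on the cited results \cite{V,V2,EOV} for these.
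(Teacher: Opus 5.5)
\textbf{Parts (1) and (3) are fine; (2) has a genuine gap.} In (1) you reproduce the paper's argument and fill in the trigonometric sum. Your telescoping computation ${\rm FPdim}\,\Sym(L_3)={\rm FPdim}(\Ver_p^+)$ is correct. Your $\mathfrak{m}$-adic filtration plus lifting argument for (3) is a valid, more self-contained substitute for the paper's appeal to \cite[Proposition 7.20]{V2}, provided (2) gives you $\mathfrak{m}/\mathfrak{m}^2\cong\mathfrak{sl}(L_2)^*\cong L_3$.

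The paper simply quotes (2) from \cite[Theorem 4.9]{V}. Your derivation of it fails at the first step. The natural action \eqref{canac} of $\pi(\mathscr{C})$ is defined only on objects of $\mathscr{C}$, and $L_2\notin\Ver_p^+$. So there is no map $\pi(\Ver_p^+)\to{\rm GL}(L_2)$; what the Tannakian formalism gives is $\epsilon:\pi(\Ver_p)\to{\rm GL}(L_2)$.

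This is not a harmless relabeling. The group $\pi(\Ver_p)(\mathbf{k})={\rm Aut}^{\ot}(\id_{\Ver_p})=\{\pm1\}$ contains the parity element, which acts by $-1$ on $L_2$ and hence by $(-1)^2=1$ on $\wedge^2L_2=\1$. So the group $\ker(\det)\subset{\rm GL}(L_2)$ singled out by your first step has underlying group $\mu_2$ and is not connected. It is not the group $SL(L_2)$ with $SL(L_2)_0={\rm Spec}(\mathbf{k})$ and $\O(SL(L_2))=U(\mathfrak{sl}(L_2))^*$ that you then use for both the dimension count and connectedness. Correspondingly, ${\rm FPdim}\,\O(\pi(\Ver_p))={\rm FPdim}(\Ver_p)=2\,{\rm FPdim}(\Ver_p^+)$. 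Your conclusion that a surjection between objects of equal FP dimension is an isomorphism is therefore applied to a surjection $\O(SL(L_2))\twoheadrightarrow H$ that your construction never produces.

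\textbf{Repair.} Run your argument for $\pi(\Ver_p)$ and then descend to $\Ver_p^+$.
\begin{enumerate}
\item Every simple object of $\Ver_p$ occurs in a tensor power of $L_2$. Hence $\O(\pi(\Ver_p))=\bigoplus_i L_i\ot L_i^*$ is generated by the matrix coefficients of $L_2$, so $\pi(\Ver_p)\hookrightarrow\ker(\det)$ is a closed embedding.
\item We have $\ker(\det)\cong\mu_2\times SL(L_2)$, because the scalar $-1$ is central and acts trivially on $\mathfrak{sl}(L_2)$. So both sides have FP dimension $2\,{\rm FPdim}(\Ver_p^+)$, and the embedding is an isomorphism.
\item Translation by the parity element acts on $L_i\ot L_i^*\subset\O(\pi(\Ver_p))$ by $(-1)^{i-1}$. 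Hence $\O(\pi(\Ver_p^+))=\bigoplus_{i\ {\rm odd}}L_i\ot L_i^*$ is exactly the $\mu_2$-invariant part, i.e.\ $\pi(\Ver_p^+)=\pi(\Ver_p)/\mu_2\cong\ker(\det)/\mu_2\cong SL(L_2)$.
\item Connectedness then follows from $SL(L_2)_0={\rm Spec}(\mathbf{k})$, as you intended.
\end{enumerate}
Alternatively, you could let $\pi(\Ver_p^+)$ act on $L_3\in\Ver_p^+$. You would then still have to identify its image in ${\rm GL}(L_3)$ with $SL(L_2)$.
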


\begin{proof}
(1) This follows from (\ref{fpdimreg}).

(2) This follows from \cite[Theorem 4.9]{V}.

(3) Since $\overline{H}=\mathbf{k}$ and $\mathfrak{g}:={\rm Lie}(\pi(\Ver_p^+))=\mathfrak{s}\mathfrak{l}(L_2)=L_3$, it follows from \cite[Proposition 7.20]{V2} that $H\cong{\rm Sym}(\mathfrak{g}^*)={\rm Sym}(L_3)$ as algebras with counit.
\end{proof}

\begin{example}\label{example5}
Let $p=5$, and set $X:=L_3$. We have 
$$H={\rm Sym}(X)=\1\oplus X\oplus \1.$$
The maximal ideal $\mathfrak{m}\subset H$ is the sum of $X$ and (the second copy of) $\1$, and the multiplication map
\[
m:X\ot X\to \1,
\]
which is a map in $\Ver_5^+$ is, up to a scaling, the composite of the identification $X\ot X= \1\oplus X$ with the projection $\1\oplus X\twoheadrightarrow \1$. \qed
\end{example} 

\begin{remark}\label{remarkpge7}
Let $p\ge 7$. Recall that ${\rm c}_{L_3,L_3}:\1\oplus L_3\oplus L_5\xrightarrow{\cong}\1\oplus L_3\oplus L_5$ is given by $(-1)^{3-1}{\rm id}_{\1}\oplus (-1)^{3-2}{\rm id}_{L_3}\oplus (-1)^{3-3}{\rm id}_{L_5}$. Thus, we have
$${\rm c}_{L_3,L_3}={\rm id}_{\1}\oplus (-1){\rm id}_{L_3}\oplus {\rm id}_{L_5}:\1\oplus L_3\oplus L_5\xrightarrow{\cong}\1\oplus L_3\oplus L_5
.$$
In particular, $S^{2}(L_3)=(L_3\ot L_3)/({\rm id}-{\rm c}_{L_3,L_3})=\1\oplus L_5$. \qed
\end{remark}

\begin{example}\label{example7}
Let $p=7$, and set $X:=L_3$ and $Y:=L_5$. By Remark \ref{remarkpge7}, we have
$$H={\rm Sym}(X)=\1\oplus X\oplus (\1\oplus Y)\oplus X\oplus \1,$$
the maximal ideal of $H$ is 
$$\mathfrak{m}=S^{1}(X)\oplus S^{2}(X)\oplus S^{3}(X)\oplus S^{4}(X)=X\oplus (\1\oplus Y)\oplus X\oplus \1,$$
and the multiplication map
\[
m:S^{1}(X)\ot S^{1}(X)\to S^{2}(X)
\]
is the composite of the identification $X\ot X= \1\oplus X\oplus Y$ with the projection $\1\oplus X\oplus Y\twoheadrightarrow \1\oplus Y$, and
\[
m:S^{1}(X)\ot S^{2}(X)= X\ot (\1\oplus Y)=X\oplus X\oplus Y\to S^{3}(X)=X
\]
is the projection on the first copy of $X$. \qed
\end{example}

\section{The Drinfeld center $\mathscr{Z}(\Ver_p)$}

Let $\mathscr{Z}(\Ver_p)$ be the Drinfeld center of $\Ver_p$. Clearly, $\mathscr{Z}(\Ver_2) = {\rm Vect} $, and $\mathscr{Z}(\Ver_3)=\mathscr{Z}(sVect)$ is the non-degenerate fusion category $\Rep(\mathbb{Z}/2\mathbb{Z}\times \mathbb{Z}/2\mathbb{Z})$ with braiding given by a non-degenerate form on the character group of $(\mathbb{Z}/2\mathbb{Z})^2$. 

\begin{lemma}\label{nonss}
The following hold:
\begin{enumerate}
\item
The canonical braided tensor functor
\[
\mathscr{Z}(\Ver_p^+)\boxtimes \mathscr{Z}(sVect)\to \mathscr{Z}(\Ver_p)
\]
is an equivalence of braided tensor categories.
\item
For any prime $p > 3$, $\mathscr{Z}(\Ver_p)$ and $\mathscr{Z}(\Ver_p^+)$ are not semisimple.
\end{enumerate}
\end{lemma}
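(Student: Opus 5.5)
For part (1), I would use the general fact that for fusion categories $\C$ and $\mathscr{D}$ there is a canonical braided equivalence $\Z(\C)\boxtimes\Z(\mathscr{D})\simeq\Z(\C\boxtimes\mathscr{D})$. The functor sends $(X,c_X)\boxtimes(Y,c_Y)$ to $X\boxtimes Y$, with half-braiding given componentwise. Applying this to $\Ver_p=\Ver_p^+\boxtimes sVect$ from \S\ref{sec:The category Verp} gives the statement.

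To check that this functor is an equivalence, I would argue as follows.
\begin{enumerate}
\item It is a braided tensor functor; this is a formal check.
\item It is fully faithful. Morphisms in the center are the morphisms commuting with half-braidings. Since $\Hom$ in a Deligne product is the tensor product of $\Hom$ spaces, and compatibility with half-braidings against objects $Z\boxtimes\1$ and $\1\boxtimes W$ separates into the two factors, full faithfulness follows.
\item It is essentially surjective. Restricting a half-braiding on $\C\boxtimes\mathscr{D}$ to the commuting subcategories $\C\boxtimes\1$ and $\1\boxtimes\mathscr{D}$ decomposes it.
\end{enumerate}
Alternatively, one can compare Frobenius–Perron dimensions: both sides have $\mathrm{FPdim}=\mathrm{FPdim}(\Ver_p^+)^2\cdot 4$. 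A faithful, hence injective, tensor functor between finite tensor categories of equal FP dimension is an equivalence \cite{EGNO}.

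A cleaner route, which I would take, uses Proposition \ref{etsn}. The fundamental group of a product is $\pi(\Ver_p)=\pi(\Ver_p^+)\times\pi(sVect)$, so $\O(\pi(\Ver_p))=H\boxtimes \O(\pi(sVect))$. Modules over such an exterior tensor product of algebras in a Deligne product are Deligne products of the module categories. Hence ${\rm Mod}_{\Ver_p}(\O(\pi(\Ver_p)))\simeq{\rm Mod}_{\Ver_p^+}(H)\boxtimes{\rm Mod}_{sVect}(\O(\pi(sVect)))$ as braided categories.

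For part (2), by (1) it suffices to show that $\Z(\Ver_p^+)$ is not semisimple, since a Deligne product with a nonzero factor is semisimple only if each factor is. By Proposition \ref{etsn}, $\Z(\Ver_p^+)\simeq{\rm Mod}_{\Ver_p^+}(H)$. By Lemma \ref{haH}, $H={\rm Sym}(L_3)$ is local with maximal ideal $\mathfrak{m}=\bigoplus_{i\ge1}S^i(L_3)$, and $\mathfrak{m}\neq0$ exactly when $p-3\ge1$, i.e.\ $p>3$ (for $p=5$ see Example \ref{example5}).

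Now consider the regular module $H$. The quotient $H\to H/\mathfrak{m}=\1$ is a surjection of $H$-modules. If the category were semisimple, it would split, exhibiting $\1$ (with trivial action) as a direct summand of $H$, say $H=\1'\oplus\mathfrak{m}$ as modules. The unit $1\in H$ maps to $1$ modulo $\mathfrak{m}$. So the summand would give an idempotent-type splitting, namely an $H$-linear map $s:\1\to H$ with $\mathfrak{m}\cdot s=0$ and $\varepsilon s=\id$.

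Since $H$ is graded and local, $s$ must land in the degree-zero part plus socle parts. The $\mathfrak m$-annihilated part of $H$ is contained in $\mathfrak{m}$: it is the top degree $S^{p-3}(L_3)=\1$, since $\mathfrak{m}$ acts nontrivially on lower degrees, as $H$ is generated by $L_3$ and is nonzero in each degree up to $p-3$. Hence $\varepsilon s=0$, which is a contradiction.

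The main obstacle is justifying that the $\mathfrak m$-invariants of $H$ lie inside $\mathfrak{m}$, i.e.\ that the socle of $H$ sits in positive degree. A simpler way to put this: $\1\subset H$ in degree $0$ is not $\mathfrak{m}$-stable-trivial, because multiplication $L_3\otimes S^0\to S^1=L_3$ is the identity and hence nonzero. Therefore the surjection $H\to\1$ does not split, and the category is not semisimple.
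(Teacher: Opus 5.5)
Your argument is correct, but it runs differently from the paper's, most visibly in (2). The paper disposes of (1) by a bare dimension count: both sides have Frobenius--Perron dimension $4\,\mathrm{FPdim}(\Ver_p^+)^2$. It proves (2) in the opposite direction to yours. It computes $\dim(\Ver_p)=\sum_{i=1}^{p-1}i^2=\frac{p(p-1)(2p-1)}{6}\equiv 0 \pmod p$ and uses the general fact that the center of a fusion category of vanishing global dimension is not semisimple. It then passes from $\Z(\Ver_p)$ down to $\Z(\Ver_p^+)$ via (1) and the semisimplicity of $\Z(sVect)$.

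You instead work in the model $\Z(\Ver_p^+)\simeq{\rm Mod}(H)$ of Proposition \ref{etsn}, also for (1) via $\O(\pi(\Ver_p))=H\boxtimes\O(\pi(sVect))$. You show that the augmentation $H\twoheadrightarrow\1$ does not split, and then go up to $\Z(\Ver_p)$. The paper's route is shorter and works for any fusion category of dimension zero, but it leans on the nontrivial criterion linking semisimplicity of $\Z(\C)$ to $\dim\C\neq 0$. Yours uses only results stated before the lemma (Proposition \ref{etsn}, Lemma \ref{haH}). It also produces an explicit non-split extension, which is essentially the statement $P(\1)=H\neq\1$ of Theorem \ref{main result}(3).

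Two small repairs are needed. First, in your dimension-count alternative for (1), faithfulness is automatic for tensor functors and does not give injectivity. The equal-FPdim criterion needs the functor to be injective (fully faithful) or surjective. Your separate full-faithfulness argument does supply this.

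Second, in (2), your claim that the $\mathfrak{m}$-annihilated part of $H$ is exactly $S^{p-3}(L_3)$ is not justified by what you say. For $p\ge 7$ you would need, e.g., that $L_5\subset S^2(L_3)$ is not killed by $L_3$. The claim is not needed, though. The condition $\varepsilon s=\id$ forces the degree-$0$ component of a splitting $s$ to be the unit. Hence the degree-$1$ component of $L_3\cdot s$ is $\id_{L_3}\neq 0$, contradicting $\mathfrak{m}\cdot s=0$. Alternatively, locality alone suffices: $s(\1)$ would be a proper ideal of the commutative algebra $H$ not contained in $\mathfrak{m}=\ker\varepsilon$, which is impossible when $\mathfrak{m}$ is the unique maximal ideal.
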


\begin{proof}
(1) This follows from a dimension count.

(2) Since $\dim(L_i) = i$ for all $1\le i\le p-1$, it follows that $$\dim(\Ver_p) = \sum_{i = 1}^{p-1} i^2 = \frac{p(p-1)(2p-1)}{6}.$$
Thus, $\dim(\Ver_p)= 0 \pmod{p}$, so $\mathscr{Z}(\Ver_p)$ is not semisimple, and hence by (1), so is $\mathscr{Z}(\Ver_p^+)$.
\end{proof}

Thus, in analyzing the center $\mathscr{Z}(\Ver_p)$ for $p\ge 5$ the factor $\mathscr{Z}(sVect)$ contributes nothing interesting, and understanding $\mathscr{Z}(\Ver_p)$ is reduced to understanding the center of the positive part $\mathscr{Z}(\Ver_p^+)$.

\subsection{The category $\mathscr{Z}(\Ver_p^+)$, $p\ge 5$}\label{sec:The category Verplus} 
Let $H:={\rm Sym}(L_3)$ as in Lemma \ref{haH}. Recall that $H$ is a commutative Hopf algebra in $\Ver_p^+$, so that the category 
$${\rm Mod}(H):={\rm Mod}_{\Ver_p^+}(H)$$
of $H$-modules in $\Ver_p^+$ forms a finite braided category, with tensor product $\ot$ induced by the tensor product in $\Ver_p^+$ and the Hopf structure on $H$. Let
\begin{equation}\label{forfunf}
\widetilde{F}:{\rm Mod}(H)\twoheadrightarrow \Ver_p^+,\qquad 
F:\mathscr{Z}(\Ver_p^+)\twoheadrightarrow \Ver_p^+
\end{equation}
be the forgetful functors, and let
\begin{equation}\label{adjforfunf}
\widetilde{I}:\Ver_p^+\xrightarrow{1:1} {\rm Mod}(H),\qquad I:\Ver_p^+\xrightarrow{1:1} \mathscr{Z}(\Ver_p^+)
\end{equation}
be the adjoints of $\widetilde{F}$ and $F$, respectively. Note that $\widetilde{I}$ is the free module functor
$$\widetilde{I}:\Ver_p^+\xrightarrow{1:1} {\rm Mod}(H),\quad V\mapsto H\ot V.$$

\begin{theorem}\label{main result}
The following hold:
\begin{enumerate}
\item
There is an equivalence of braided tensor categories 
$$E:{\rm Mod}(H)\xrightarrow{\simeq}\mathscr{Z}(\Ver_p^+)$$
for which the diagram
\[
\begin{tikzcd}
  {\rm Mod}(H) \arrow{rr}{E} \arrow[swap]{dr}{\widetilde{F}} & & \mathscr{Z}(\Ver_p^+) \arrow{dl}{F} \\[10pt]
    & \Ver_p^+ 
\end{tikzcd}
\]
commutes.
\item
The $L_i$ for $i$ odd, $1\le i\le p-2$, are the simple objects of $\mathscr{Z}(\Ver_p^+)$. In particular, $\mathscr{Z}(\Ver_p^+)$ has the Chevalley property and $\Ver_p^+$ is the semisimple part of $\mathscr{Z}(\Ver_p^+)$. In other words, 
the section $\Ver_p^+\to \mathscr{Z}(\Ver_p^+)$ provided by the symmetry on $\Ver_p^+$ is an equivalence onto the fusion subcategory of semisimple objects in $\mathscr{Z}(\Ver_p^+)$.
\item
For any $V\in \Ver_p^+$, the free module $H\ot V$ is both the projective cover and injective envelope of $V$ in ${\rm Mod}(H)$. Furthermore, the $P_i:=H\ot L_i$ for $i$ odd, $1\le i\le p-2$, provide a complete listing of the indecomposable projectives in ${\rm Mod}(H)$, and we have $I(L_i)\cong E(H\ot L_i)=E(\widetilde{I}(L_i))$.
\item
The braiding ${\rm c}$ on $\mathscr{Z}(\Ver_p^+)$ is unipotent; that is, for every $V,W\in \mathscr{Z}(\Ver_p^+)$, the operator $\id-{\rm c}^2_{V,W}\in \End(V\ot W)$ is nilpotent. 
\end{enumerate}
\end{theorem}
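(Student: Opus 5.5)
Part (1) is immediate from Proposition \ref{etsn} applied with $\mathscr{C}=\Ver_p^+$ and $(G,F)=(\pi(\Ver_p^+),\id)$, together with Lemma \ref{haH}, which identifies $\O(\pi(\Ver_p^+))$ with $H={\rm Sym}(L_3)$. The equivalence $E$ sends $(X,\rho_X,\mu_X)$ to $X$ with a half-braiding built from $\rho_X$ and $\mu_X$. Since the coaction of $\O(\pi)$ on objects of $\Ver_p^+$ is the natural action, and $\mathscr{E}=\Ver_p^+$, the underlying object is unchanged. Hence $F\circ E=\widetilde F$ on objects and morphisms, and the triangle commutes.

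For (2), I work in ${\rm Mod}(H)$. By Lemma \ref{haH}(3), $H$ is local with maximal ideal $\mathfrak{m}=\bigoplus_{i\ge1}S^i(L_3)$, and $\mathfrak{m}$ is nilpotent because $H$ is graded with $S^i(L_3)=0$ for $i>p-3$. For any module $M$, the sequence $\mathfrak{m}^kM$ is a finite filtration by $H$-submodules, and on each quotient $\mathfrak{m}$ acts by zero. So every simple module is a simple object of $\Ver_p^+$ on which $H$ acts through $\varepsilon$, and conversely each $L_i$ with trivial action is simple. These simples are exactly the image of the section $\Ver_p^+\to\Z(\Ver_p^+)$: under $E$, trivial $H$-action corresponds to the half-braiding $c_{X,Y}=$ symmetry, since $\mu_X=\varepsilon\otimes\id$ turns the composite defining ${\rm c}$ into the symmetry. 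This section is a tensor functor, and a module with trivial action tensored with another such module has trivial action (the action on a tensor product goes through the coproduct, and $\varepsilon$ is multiplicative). Therefore the semisimple objects form the fusion subcategory $\Ver_p^+$, which gives the Chevalley property.

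For (3), I use adjunction. $\Hom_H(H\ot V,M)=\Hom(V,M)$, and $\widetilde F$ is exact with $\Ver_p^+$ semisimple, so $H\ot V$ is projective. Its head is $(H\ot V)/(\mathfrak{m}\ot V)=V$. Hence for $V=L_i$ simple, $H\ot L_i$ has simple top $L_i$ and is the projective cover of $L_i$, and the $P_i$ exhaust the indecomposable projectives. Injectivity is the step that needs an argument. I would use that ${\rm Mod}(H)\simeq\Z(\Ver_p^+)$ is a finite tensor category, so it is Frobenius and projectives are injective. To identify $H\ot L_i$ as the injective envelope of $L_i$, I need its socle to be $L_i$. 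The top degree $S^{p-3}(L_3)=\1$ is the socle of $H$; this is the step I expect to be the main obstacle. I would first try to deduce it from $H$ being a Frobenius algebra in $\Ver_p^+$, which holds because $H$ is a finite Hopf algebra in a fusion category with integral $S^{p-3}=\1$. Then ${\rm soc}(H\ot L_i)\supseteq S^{p-3}\ot L_i=L_i$. Comparing with the injective hull of $L_i$, which is some $P_j$ with socle $L_i$ via the Nakayama permutation, and using that the integral is invariant of trivial type, I expect the Nakayama permutation to be trivial. Finally, $I$ is right adjoint to $F$, and its counterpart under $E$ is the right adjoint of $\widetilde F$, namely the cofree module $\underline{\Hom}(H,V)\cong H^*\ot V$. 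Since $H^*\cong H$ as modules by the Frobenius property, this is $H\ot V$, giving $I(L_i)\cong E(H\ot L_i)$.

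For (4), ${\rm c}^2_{V,W}$ is natural. Filter $V$ and $W$ by their $\mathfrak{m}$-adic filtrations, which have semisimple subquotients. The double braiding respects the tensor product filtration, and on associated graded pieces, which are objects of the symmetric subcategory $\Ver_p^+$, it equals the square of the symmetry, i.e. the identity. Thus $\id-{\rm c}^2$ strictly lowers filtration degree and is nilpotent.
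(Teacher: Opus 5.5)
\textbf{Verdict.} Your parts (1), (2) and (4) follow the paper's argument. In (2) you also check that trivial $H$-action corresponds under $E$ to the symmetric half-braiding, which the paper leaves implicit. One small correction there: closure of trivial modules under $\ot$ uses $(\varepsilon\ot\varepsilon)\circ\Delta=\varepsilon$, not multiplicativity of $\varepsilon$.

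\textbf{Part (3): a different route.} You compute the head of $H\ot V$ directly. This cleanly shows that $H\ot L_i$ is the projective cover of $L_i$. The paper argues differently: $I(L_i)$ is projective and surjects onto $L_i$, so it contains $P_i$. It then uses ${\rm FPdim}(\Z(\Ver_p^+))=\sum_i{\rm FPdim}(P_i){\rm FPdim}(L_i)$ together with ${\rm FPdim}(I(L_i))={\rm FPdim}(I(\1)){\rm FPdim}(L_i)$ to force $P_i=I(L_i)$ and $I(\1)\cong E(H)$.

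\textbf{Part (3): the gap.} As written, your (3) has two unproved steps.
\begin{enumerate}
\item The socle of $H\ot L_i$ is left at ``I expect the Nakayama permutation to be trivial''.
\item The identification $I(L_i)\cong E(H\ot L_i)$ rests on the claim that $H$ is a Frobenius algebra in $\Ver_p^+$ with $H^*\cong H$ as $H$-modules. This is a nontrivial theorem about integrals of Hopf algebras in braided categories, and you neither prove nor cite it.
\end{enumerate}

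\textbf{How to close it.} Neither step needs more than what you already have.
\begin{enumerate}
\item Since $H\ot L_i$ has simple head, it is indecomposable. Being projective in the Frobenius category ${\rm Mod}(H)$, it is an indecomposable injective, so it has simple socle. By the grading, $\mathfrak{m}\cdot S^{p-3}(L_3)=0$. Hence $S^{p-3}(L_3)\ot L_i\cong L_i$ is a submodule with trivial action. Therefore ${\rm soc}(H\ot L_i)=L_i$, and $H\ot L_i$ is the injective envelope of $L_i$. No Nakayama-permutation argument and no Frobenius structure on $H$ is needed.
\item Let $R$ be the right adjoint of the exact functor $\widetilde F$. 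Then $R$ preserves injectives, so it sends every object of the semisimple category $\Ver_p^+$ to an injective. Adjunction gives $\Hom_H(L_j,R(L_i))\cong\Hom(L_j,L_i)$, so ${\rm soc}\,R(L_i)=L_i$. Thus $R(L_i)$ is the injective envelope of $L_i$, that is, $R(L_i)\cong H\ot L_i$. Since $F\circ E=\widetilde F$, this gives $I(L_i)\cong E(H\ot L_i)$.
\end{enumerate}
Alternatively, you can simply run the paper's FPdim count for this last identification.
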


\begin{proof}
(1) This follows from Proposition \ref{etsn} and the preceding remarks.

(2) Note that any object $V$ in $\Ver_p^+$ becomes an object in ${\rm Mod}(H)$, denoted by $T(V)$, via the trivial $H$-action,
\[
H\ot V\xrightarrow{\varepsilon\ot {\rm id}}\1\ot V\cong V,
\]
so that we have a functor
$$T:\Ver_p^+\xrightarrow{1:1} {\rm Mod}(H),\quad V\mapsto T(V).$$
Clearly, $T$ is a tensor embedding, which is a section of the forgetful functor $\widetilde{F}$.
Thus by (1), it suffices to prove that all of the objects in the image of $T$ are semisimple, and this image is precisely the (fusion) subcategory of semisimples in ${\rm Mod}(H)$. To this end, note that since $\widetilde{F}:{\rm Mod}(H)\twoheadrightarrow \Ver_p^+$ is faithful, $L_i=\widetilde{F}(T(L_i))$ is simple for any simple $L_i\in \Ver_p^+$. Moreover, by considering the $\mathfrak{m}$-adic filtration of objects in ${\rm Mod}(H)$, we see that locality of $H$ implies that for any simple $L_i\in \Ver_p^+$, there is a unique simple in ${\rm Mod}(H)$ which maps to $L_i$ under $\widetilde{F}$.

(3) It is clear that for any $V\in \Ver_p^+$, the free module $H\ot V$ is both projective and injective in ${\rm Mod}(H)$ (projectivity follows from adjunction to the exact forgetful functor,
while injectivity follows because finite tensor categories are Frobenius), and $V$ is a quotient of $H\ot V$.

Set $P_i:=P(L_i)$. Since $I(L_i)=I(\1)\ot L_i$ is projective in $\mathscr{Z}(\Ver_p^+)$, and 
$$\Hom_{\mathscr{Z}(\Ver_p^+)}(I(L_i),L_i) \cong \Hom_{\Ver_p^+}(L_i,L_i),$$
it follows that $P_i$ is a direct summand of $I(L_i)$ for each $i$. Thus, we have
\begin{equation*}
\begin{split}
& {\rm FPdim}(\Ver_p^+)^2={\rm FPdim}(\mathscr{Z}(\Ver_p^+))=\sum_i{\rm FPdim}(P_i){\rm FPdim}(L_i)\\
& \le \sum_i{\rm FPdim}(I(L_i)){\rm FPdim}(L_i)=\sum_i{\rm FPdim}(I(\mathbf{1})){\rm FPdim}(L_i)^2\\
& = {\rm FPdim}(\Ver_p^+){\rm FPdim}(\Ver_p^+)={\rm FPdim}(\Ver_p^+)^2,
\end{split}
\end{equation*}
which implies that we must have $P_i=I(L_i)$. Finally, since by Lemma \ref{haH}(1), $H$ and $I(\1)$ have the same Frobenius-Perron dimension, we have $I(\1)\cong E(H)$.

(4) Let $V_i$, $0\le i\le n$, and $W_j$, $0\le j\le m$, be the successive semisimple quotients of the Loewy filtrations of $V$ and $W$. Since by Theorem \ref{main result}, $\mathscr{Z}(\Ver_p^+)$ has the Chevalley property, $V\ot W$ has a filtration with successive semisimple quotients $\bigoplus_{i+j=k} V_i\ot W_j$, $0\le k\le n+m$. Since $\id -{\rm c}_{V_i,W_j}^2=0$ for each $i,j$, it follows from naturality of the braiding that some power of $\id-{\rm c}^2_{V,W}$ is zero.
\end{proof}

\begin{remark}
By Theorem \ref{main result}(2), $\mathscr{Z}(\Ver_p^+)$ is quasisymmetric in the sense of \cite[Definition 2.6]{EG}. \qed
\end{remark}

\subsection{The Cartan matrix of $\mathscr{Z}(\Ver_p^+)$}\label{sec:The Cartan matrix}
Let $F$ and $I$ be as in (\ref{forfunf})-(\ref{adjforfunf}). Then for any $V \in \Ver_p^+$, we have 
$$FI(V) = \bigoplus_{j = 1}^{(p-1)/2} L_{2j-1} \otimes V \otimes L_{2j-1}$$
(see e.g. \cite[Proposition 9.2.2]{EGNO}).
Thus, using the fusion rules for $\Ver_p^+$, we get
\begin{equation*}
\begin{split}
& FI(V) = \bigoplus_{j = 1}^{(p-1)/2} L_{2j-1} \otimes L_{2j-1}\otimes V=\bigoplus_{j = 1}^{(p-1)/2} \left(\bigoplus_{r = 1}^{\min(2j-1,p-2j+1)} L_{2r - 1}\right)\otimes V\\
& =\left(\bigoplus_{s = 1}^{(p-1)/2}\frac{p-2s+1}{2}L_{2s - 1}\right)\otimes V.
\end{split}
\end{equation*}

For every $i\ge j\in\{1,3,\dots,p-2\}$, set $N_{ij}:=\frac{i-j}{2}+\min(j,p-i)$. Clearly,
$$N_{ij}=
\begin{cases}
\frac{i+j}{2},& i+j< p\\
p-\frac{i+j}{2},& i+j>p
\end{cases}.$$

Let ${\rm C}=({\rm C}_{ij})$ be the Cartan matrix of $\mathscr{Z}(\Ver_p^+)$; that is, 
$${\rm C}_{ij}=[P_i:L_j]=\dim\Hom_{\mathscr{Z}(\Ver_p^+)}(P_j,P_i).$$ Recall 
\cite[Theorem 6.6.1]{EGNO} that since by Lemma \ref{nonss}, $\mathscr{Z}(\Ver_p^+)$ is not semisimple for each prime $p\ge 5$, ${\rm C}$ is degenerate; that is, $p$ divides ${\rm det}({\rm C})$.

\begin{theorem}\label{mainthm}
For each prime $p\ge 5$, the following hold:
\begin{enumerate}
\item
For every odd $1\le j\le i\le p-2$, we have 
$${\rm C}_{ij}={\rm C}_{ji}=\frac{j(p-i)}{2}.$$
\item
${\rm det}({\rm C})=p^{\frac{p-3}{2}}$.
\end{enumerate}
\end{theorem}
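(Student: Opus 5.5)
The plan is to compute the Cartan entries directly from the formula for $FI(V)$ displayed just before the statement, and then to evaluate the determinant using the special ``single-pair'' (Green's function) shape of the resulting matrix.

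\textbf{Step 1: reduce ${\rm C}_{ij}$ to a fusion-rule count.} By Theorem \ref{main result}(2), the composition factors of any object of $\mathscr{Z}(\Ver_p^+)$ are the $L_j$ with trivial $H$-action. The forgetful functor $F$ is exact and sends each such simple to $L_j$. Hence ${\rm C}_{ij}=[P_i:L_j]$ equals the multiplicity of $L_j$ in the semisimple object $F(P_i)$. By Theorem \ref{main result}(3), $P_i\cong I(L_i)$, so the displayed computation gives
$$F(P_i)=FI(L_i)=\bigoplus_{s=1}^{(p-1)/2}\tfrac{p-2s+1}{2}\,L_{2s-1}\otimes L_i .$$
Writing $k=2s-1$, we obtain ${\rm C}_{ij}=\sum_{k}\frac{p-k}{2}\,[L_k\otimes L_i:L_j]$, where $k$ runs over odd integers.

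\textbf{Step 2: evaluate the sum.} Assume $j\le i$. By the truncated $\mathfrak{sl}_2$ (Verlinde) fusion rules, $L_j$ occurs in $L_k\otimes L_i$, with multiplicity one, exactly when $k$ is odd and
$$i-j+1\le k\le \min(i+j-1,\,2p-i-j-1).$$
Parity is automatic since $i,j$ are odd. The number of such $k$ is precisely $N_{ij}$, which explains the definition given before the theorem. The summand $\frac{p-k}{2}$ is an arithmetic progression in $k$, so the sum equals $N_{ij}$ times the average term. There are two cases.
\begin{itemize}
\item If $i+j<p$: there are $j$ terms, and the average of $k$ is $i$, so the sum is $j(p-i)/2$.
\item If $i+j>p$: there are $p-i$ terms, and the average of $k$ is $p-j$, so the sum is $(p-i)j/2$.
\end{itemize}
Both cases give $j(p-i)/2$. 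The equality ${\rm C}_{ij}={\rm C}_{ji}$ follows either from the same computation with the roles of $i$ and $j$ swapped (the formula depends only on $\min$ and $\max$), or from ${\rm C}_{ij}=\dim\Hom(P_j,P_i)$ together with the symmetric fusion rules. This proves (1).

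\textbf{Step 3: the determinant.} Put $n=(p-1)/2$ and index by $a,b\in\{1,\dots,n\}$ with $i=2a-1$. Then ${\rm C}=\frac12 D$, where
$$D_{ab}=u_{\min(a,b)}\,v_{\max(a,b)},\qquad u_a=2a-1,\quad v_a=p-2a+1 .$$
For such single-pair matrices one has the standard identity
$$\det D=u_1v_n\prod_{a=1}^{n-1}\bigl(u_{a+1}v_a-u_av_{a+1}\bigr).$$
I would prove this by induction on $n$. Subtract $v_{n}/v_{n-1}$ times row $n-1$ from row $n$. Because the last two rows agree up to scaling in their first $n-1$ columns, this kills every entry of row $n$ except the last one, which becomes $v_n(u_nv_{n-1}-u_{n-1}v_n)/v_{n-1}$. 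Expanding along this row reduces to size $n-1$, where the last diagonal entry $u_{n-1}v_{n-1}$ gets replaced appropriately, and the induction closes. Alternatively, one can note that $D^{-1}$ is tridiagonal.

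Substituting, we get $u_{a+1}v_a-u_av_{a+1}=(i+2)(p-i)-i(p-i-2)=2p$ and $u_1v_n=1\cdot 2=2$. Hence $\det D=2(2p)^{n-1}$, and therefore
$$\det{\rm C}=2^{-n}\cdot 2^n p^{n-1}=p^{(p-3)/2}.$$

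The only genuinely delicate points are getting the boundary of the truncated fusion rule right in Step 2, and carrying out the inductive row reduction for the single-pair determinant cleanly. The rest is bookkeeping.
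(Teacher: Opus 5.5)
Your proposal is correct. Part (1) is the paper's argument. The paper also computes ${\rm C}_{ij}$ as the multiplicity of $L_j$ in $FI(L_i)=\bigoplus_s\frac{p-2s+1}{2}L_{2s-1}\otimes L_i$ and sums the same arithmetic progression over the range allowed by the truncated fusion rules. It reaches that multiplicity through the adjunction $\dim\Hom(I(L_j),I(L_i))=\dim\Hom(L_j,FI(L_i))$ rather than through exactness of $F$, which gives the same count.

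Part (2) takes a different route.
\begin{itemize}
\item The paper does one global integral row reduction. The last row of ${\rm C}$ is $(1,3,\dots,p-2)$. Subtracting $\frac{p-i}{2}$ times it from row $i$ kills the whole lower triangle at once: row $i$ becomes $-p\frac{j-i}{2}$ for $j>i$ and $0$ for $j\le i$. Expanding along the first column then leaves a triangular matrix with $-p$ on the diagonal.
\item You instead view $2{\rm C}$ as a single-pair matrix and prove the general determinant formula by eliminating consecutive rows and inducting.
\end{itemize}
Your route is more structural and reusable. The answer appears as the boundary factor $u_1v_n=2$ times a product of the constant quantities $u_{a+1}v_a-u_av_{a+1}=2p$. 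The same single-pair structure is also what makes ${\rm C}^{-1}$ tridiagonal. The paper's route is shorter and never leaves $\mathbb{Z}$, since the multipliers $\frac{p-i}{2}$ are integers. With a few column operations it even shows that ${\rm C}$ has Smith normal form $\mathrm{diag}(1,p,\dots,p)$. Your elimination, by contrast, divides by $v_{n-1}$ and works over $\mathbb{Q}$.

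There are two small expository slips, neither of which affects the result.
\begin{itemize}
\item In Step 2 the number of admissible $k$ is $\min(j,p-i)=N_{ij}-\frac{i-j}{2}$, not $N_{ij}$. In the paper, $N_{ij}$ is the upper limit of the index $s=\frac{k+1}{2}$. So the sentence ``the sum equals $N_{ij}$ times the average term'' is false as written, but your two bullets use the correct counts $j$ and $p-i$.
\item In Step 3 no diagonal entry needs to be ``replaced.'' After the row operation, expanding along row $n$ leaves exactly the leading $(n-1)\times(n-1)$ block of $D$. That block is again single-pair with the same $u,v$. The $v_{n-1}$ in the denominator cancels against the factor $u_1v_{n-1}$ supplied by the induction hypothesis. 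The division is harmless because every $v_a\ge 2$.
\end{itemize}
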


\begin{proof}
(1) By definition, for every odd $1\le j\le i\le p-2$, we have
\begin{equation*}
\begin{split}
& {\rm C}_{ij}=\dim\Hom_{\mathscr{Z}(\Ver_p^+)}(I(L_j),I(L_i))=\dim\Hom_{\Ver_p^+}(L_j,F(I(L_i)))\\
& =\dim\Hom_{\Ver_p^+}\left(L_j,\left(\bigoplus_{s = 1}^{(p-1)/2}\frac{p-2s+1}{2}L_{2s - 1}\right)\otimes L_i\right)\\
& =\sum_{s=1}^{(p-1)/2}\frac{p-2s+1}{2}\dim\Hom_{\Ver_p^+}\left(L_j,L_{2s-1}\otimes L_i\right)\\
& =\sum_{s=1}^{(p-1)/2}\frac{p-2s+1}{2}\left(\sum_{r = 1}^{\min(i,j,p-i,p-j)} \dim\Hom_{\Ver_p^+}\left(L_{|i-j| + 2r - 1},L_{2s-1}\right)\right).
\end{split}
\end{equation*}
Thus, for every odd $1\le j\le i\le p-2$, we have
\begin{equation*}
\begin{split}
& {\rm C}_{ij}={\rm C}_{ji}=\sum_{s=1}^{(p-1)/2}\frac{p-2s+1}{2}\left(\sum_{r = 1}^{\min(j,p-i)} 
\dim\Hom_{\Ver_p^+}\left(L_{i-j + 2r - 1},L_{2s-1}\right)\right).
\end{split}
\end{equation*}
Note that we must have $i-j + 2r - 1=2s-1$ to get a contribution of $1$ in the second sum, so on one hand
$$s=\frac{i-j}{2}+r\le \frac{i-j}{2}+\min(j,p-i)=N_{ij},$$
and on the other hand
$$\frac{i-j}{2}+1\le \frac{i-j}{2}+r=s.$$
Thus, we must have $\frac{i-j}{2}+1\le s\le N_{ij}$, so it follows that
\begin{equation*}
\begin{split}
& {\rm C}_{ij}={\rm C}_{ji}\\
& =\sum_{s=\frac{i-j}{2}+1}^{N_{ij}}\frac{p-2s+1}{2}=\left(N_{ij}-\frac{i-j}{2}\right)\frac{p+1}{2}-\sum_{s=\frac{i-j}{2}+1}^{N_{ij}}s\\
& =\left(N_{ij}-\frac{i-j}{2}\right)\frac{p+1}{2}-\frac{\left(N_{ij}-\frac{i-j}{2}\right)\left(N_{ij}+\frac{i-j}{2}+1\right)}{2}\\
& =\left(N_{ij}-\frac{i-j}{2}\right)\left(\frac{p+1}{2}-\frac{N_{ij}+\frac{i-j}{2}+1}{2}\right)=\left(N_{ij}-\frac{i-j}{2}\right)\left(\frac{p-N_{ij}-\frac{i-j}{2}}{2}\right)\\
& =\frac{1}{2}\left(N_{ij}-\frac{i-j}{2}\right)\left(p-N_{ij}-\frac{i-j}{2}\right).
\end{split}
\end{equation*}
Thus, if $i+j<p$, then
$${\rm C}_{ij}={\rm C}_{ji}=\frac{1}{2}\left(\frac{i+j}{2}-\frac{i-j}{2}\right)\left(p-\frac{i+j}{2}-\frac{i-j}{2}\right)= \frac{j(p-i)}{2},$$
and if $i+j>p$, then
$${\rm C}_{ij}={\rm C}_{ji}=\frac{1}{2}\left(p-\frac{i+j}{2}-\frac{i-j}{2}\right)\left(p-\left(p-\frac{i+j}{2}\right)-\frac{i-j}{2}\right)= \frac{j(p-i)}{2},$$
as claimed.

(2) Note that ${\rm C}_{p-2,1}=1$. It is straightforward to verify that by adding to each row $i<p-2$ the multiple of the last row by $-{\rm C}_{i,1}=\frac{i-p}{2}$, the matrix ${\rm C}$ is row reduced to the matrix
\[
\begin{pmatrix}
0 & -p & -2p & \cdots & -\frac{p-5}{2}p & -\frac{p-3}{2}p\\
0 & 0 & -p & \cdots & -\frac{p-7}{2}p & -\frac{p-5}{2}p \\
\vdots & \vdots & \vdots & \vdots & \vdots & \vdots\\
0 & 0 & 0 & \cdots & 0 & -p\\
1 & 3 & 5 & \cdots & p-4 & p-2
\end{pmatrix}
\]
Thus, ${\rm det}({\rm C})=(-1)^{\frac{p-1}{2}+1}{\rm det}({\rm C}')=(-1)^{\frac{p+1}{2}}{\rm det}({\rm C}')$, where 
\[
{\rm C}'=
\begin{pmatrix}
-p & -2p & \cdots & -\frac{p-5}{2}p & -\frac{p-3}{2}p\\
0 & -p & \cdots & -\frac{p-7}{2}p & -\frac{p-5}{2}p \\
\vdots & \vdots & \vdots & \vdots & \vdots \\
0 & 0 & \cdots & 0 & -p
\end{pmatrix}
\]
is upper triangular with $-p$ on the diagonal, so we get
\begin{equation*}
\begin{split}
& {\rm det}({\rm C})=(-1)^{\frac{p+1}{2}}{\rm det}({\rm C}')\\
& =(-1)^{\frac{p+1}{2}}(-p)^{\frac{p-3}{2}}=(-1)^{\frac{2p-2}{2}}p^{\frac{p-3}{2}}=(-1)^{p-1}p^{\frac{p-3}{2}}=p^{\frac{p-3}{2}},
\end{split}
\end{equation*}
as claimed.
\end{proof}
%\ddots & 

\subsection{Minimal extensions of $\Ver_p^+$}\label{sec:Minimal extensions}
Let $\mathscr{D}$ be a finite {non-degenerate} braided tensor category over $\mathbf{k}$ that contains $\Ver_p^+$ as a Lagrangian subcategory, so that by \cite{S}, we have ${\rm FPdim}(\mathscr{D})={\rm FPdim}(\Ver_p^+)^2$.

\begin{question}\label{minext}
Is it true that $\mathscr{D}\simeq \mathscr{Z}(\Ver_p^+)$ as braided categories? In particular, 
would this follow if we assume that $\Ver_p^+$ is the semisimple part of $\mathscr{D}$? \qed
\end{question}

\subsection{The cohomology ring of $\mathscr{Z}(\Ver_p^+)$}\label{sec:The cohomology ring}
Set $\mathscr{Z}:=\mathscr{Z}(\Ver_p^+)$. Let
\begin{equation}\label{minprojres}
(P^{\bullet})\qquad\cdots \to P^3\xrightarrow{\partial^3} P^2\xrightarrow{\partial^2}P^1\xrightarrow{\partial^1}P^0\xrightarrow{\varepsilon}\1\to 0
\end{equation}
be a minimal projective resolution of $\1$ in $\mathscr{Z}$; that is, $P^0=P(\1)$, and for every $n\ge 1$, $P^n=P(\ker (\partial^{n-1}))$. 
Then for each simple $L_i\in \mathscr{Z}$, 
\begin{equation}\label{minprojress}
(P^{\bullet}\ot L_i)\qquad\cdots \to P^2\ot L_i\xrightarrow{\partial^2\ot \id}P^1\ot L_i\xrightarrow{\partial^1\ot \id}P^0\ot L_i\xrightarrow{\varepsilon\ot \id}L_i\to 0
\end{equation}
is a minimal projective resolution of $L_i$ in $\mathscr{Z}$. Recall that for each simple $L_j\in \mathscr{Z}$, we have
\begin{equation}\label{compextn}
{\rm Ext}^{n}(L_i,L_j)=\Hom(P^n\ot L_i,L_j).
\end{equation}
In particular, the cohomology ring of $\mathscr{Z}$ 
$$H^{\bullet}(\mathscr{Z}):=\bigoplus_{n\ge 0}{\rm Ext}^n(\1,\1)=\bigoplus_{n\ge 0}\Hom(P^n,\1)$$
is a graded commutative algebra, and ${\rm Ext}^{\bullet}(L_i,L_j):=\bigoplus_{n\ge 0}{\rm Ext}^n(L_i,L_j)$ is a graded $H^{\bullet}(\mathscr{Z})$-module for each $i,j$.

Recall that for every odd $i$, $1\le i\le p-2$, $P_i:=P(L_i)$ denotes the projective cover of $L_i$ in $\mathscr{Z}$. 

\begin{theorem}\label{cohomology ring}
The following hold:
\begin{enumerate}
\item
The unit object $\1$ (hence, every object) of $\mathscr{Z}$ admits a $4$-periodic minimal projective resolution $P^{\bullet}$, with 
$P^0=P^3=P_1$ and $P^1=P^2=P_3$.
\item
There is an isomorphism of graded commutative algebras 
$$H^{\bullet}(\mathscr{Z})\cong \mathbf{k}[x,y]/\langle y^2\rangle,$$
where $x$ sits in degree $4$ and $y$ in degree $3$.
\item
For every odd $i,j$, $1\le i,j\le p-2$, we have 
\[
{\rm Ext}^{n}(L_i,L_j)=
\left\{\begin{array}{ll}
\Hom(L_i,L_j)=\delta_{i,j}\mathbf{k}, &  n=0,3\, (\text{mod }4)\\
d_{ij}\mathbf{k}, & n=1,2\, (\text{mod }4)
\end{array}\right.,
\]
\end{enumerate}
where $d_{ij}:=\dim\Hom(L_3\ot L_i,L_j)\in\{0,1\}$.
\end{theorem}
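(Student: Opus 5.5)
We work in ${\rm Mod}(H)$ with $H={\rm Sym}(L_3)$, which we may do by Theorem \ref{main result}(1). The plan is to write down an explicit Koszul-type resolution of the trivial module $\1=H/\mathfrak{m}$ and show it closes up after four steps.

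The first two terms are forced. Since $P^0=P_1=H$, we have $\ker\varepsilon=\mathfrak{m}$, and $\mathfrak{m}/\mathfrak{m}^2\cong L_3$. Hence $P^1=H\ot L_3=P_3$, and $\partial^1$ is induced by the inclusion $L_3=S^1(L_3)\hookrightarrow H$ followed by multiplication.

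For the next term we compute $K_1:=\ker\partial^1$ and its top $K_1/\mathfrak{m}K_1$. In the commutative-algebra analogue, $\mathfrak{sl}_2$ is $3$-dimensional and the Koszul complex uses $\Lambda^0,\Lambda^1=V,\Lambda^2\cong V,\Lambda^3=\1$. This suggests the analogue here is $\Lambda^2(L_3)\cong L_3$; indeed ${\rm c}_{L_3,L_3}$ acts by $-1$ on the $L_3$ summand of $L_3\ot L_3$ (Remark \ref{remarkpge7}), so this summand is the exterior square. We therefore expect $P^2=H\ot L_3$ and $P^3=H\ot\1$. However, $H$ is not a polynomial ring: it is truncated, with $S^{p-3}(L_3)=\1$ as its socle. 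So the Koszul complex is not exact, and its failure should produce the periodicity.

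To make this precise, we will verify exactness by a dimension count together with the graded structure. All maps respect the grading on $H$, shifted appropriately. We check:
\begin{enumerate}
\item $\partial^1$ has image $\mathfrak{m}$, of FPdim $\dim H-1$;
\item $\partial^2:H\ot L_3\to H\ot L_3$ is induced by $L_3\to \mathfrak{m}\ot L_3$, with image $\ker\partial^1$;
\item $\partial^3:H\to H\ot L_3$ is the Koszul map, with image $\ker\partial^2$;
\item $\ker\partial^3$ is exactly the socle $S^{p-3}(L_3)=\1$ of $H$, so that $P^4=H$ and the complex repeats, with $\partial^4$ the map $H\to \1\hookrightarrow H$ onto the socle.
\end{enumerate}
Minimality holds because every differential lands in $\mathfrak{m}\cdot P$.

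The FPdim bookkeeping goes as follows. Write $d={\rm FPdim}\,H$ and $\ell={\rm FPdim}\,L_3$. The kernels then have FPdims $d-1$, $\ell d-(d-1)$, $\ell d-\ell d+d-1=d-1$, and $1$. The consistency $\ell d-(d-1)={\rm FPdim}(\ker\partial^2)$ must hold, and it will follow from the sequence being exact at each stage. To show exactness in the middle we compare graded pieces. Concretely, we will identify $\ker\partial^2/\mathfrak{m}\ker\partial^2$ with the relevant top by computing $\Hom(\ker\partial^{n-1},L_j)$ via the multiplication maps $S^1\ot S^k\to S^{k+1}$, whose structure is given by the decompositions $S^i\oplus S^{i-1}=L_{i+1}\ot L_{i+1}$.

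The main obstacle is exactness of $\partial^2$ and $\partial^3$, i.e.\ that the Koszul complex of this truncated Sym has homology only at the ends. Our first approach is to use the duality $H\cong H^*$: $H$ is Frobenius, since it is a Hopf algebra in a fusion category, with socle $S^{p-3}$. The complex $P^3\to P^2\to P^1\to P^0$ is self-dual under $\Hom(-,H)$ up to a shift, which reduces exactness at $P^2$ to exactness at $P^1$ on the dual side. Exactness at $P^1$ in turn follows from the FPdim count once the top of $\ker\partial^1$ is known to be $L_3$. To find that top, it suffices to know that the relations of $H$ in degree $2$ are exactly $\Lambda^2L_3\cong L_3$, which holds by the definition of Sym as the quotient by $\mathrm{im}(\id-{\rm c})$.

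Parts (2) and (3) then follow from \eqref{compextn}. We have $\Hom(H\ot L_3\ot L_i,L_j)=\Hom(L_3\ot L_i,L_j)$, which has dimension $d_{ij}$, and since the resolution is minimal all induced differentials on $\Hom(-,L_j)$ vanish. This gives (3). For (2), $\mathrm{Ext}^n(\1,\1)$ is $\mathbf{k}$ in degrees $n\equiv 0,3 \pmod 4$ and $0$ otherwise. The degree-$4$ class $x$ is the periodicity element, given by the Yoneda splice of the $4$-term exact sequence; multiplication by $x$ realizes the periodicity and is therefore an isomorphism in all degrees. Let $y$ be the degree-$3$ class. Graded commutativity forces $y^2=-y^2$, and in any case $y^2$ lies in degree $6\equiv 2$, where the Ext group vanishes, so $y^2=0$. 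Hence $H^\bullet(\mathscr{Z})\cong\mathbf{k}[x,y]/\langle y^2\rangle$.
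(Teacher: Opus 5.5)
Your proposal is correct and follows the paper's overall architecture: the Koszul complex $H\ot\Lambda^\bullet(L_3)$ of shape $H\to H\ot L_3\to H\ot L_3\to H$, periodic splicing, and reading off Ext from a minimal resolution with $\Hom(P_3,\1)=0$. Parts (2) and (3) are argued as in the paper. Your observation that Yoneda multiplication by the splice class $x$ is the periodicity isomorphism is the point the paper attributes to \cite[Corollary 5.6]{E}.

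The real difference is in part (1). The paper does not prove that the Koszul complex has cohomology $\1$ at the two ends and $0$ in the middle; it cites \cite[Proposition 5.1]{E}. This places $H$ in Etingof's general framework of $(p-3,3)$-Koszul algebras. You instead derive the exactness by hand, which works because $\Lambda^\bullet(L_3)$ has length $3$.

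\begin{itemize}
\item Exactness at $P^1$ is the standard presentation sequence $H\ot\Lambda^2L_3\to H\ot L_3\to H\to\1\to0$ of the quadratic algebra $\Sym(L_3)=T(L_3)/(\mathrm{im}(\id-{\rm c}))$. This is a generation statement, not an FPdim count.
\item The other end is handled by self-injectivity of $H$.
\end{itemize}

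The one step you assert rather than prove is the self-duality of the complex under $\underline{\Hom}_H(-,H)$. It requires that $\Lambda^\bullet(L_3)\cong\1\oplus L_3\oplus L_3\oplus\1$ is a Frobenius algebra, and that the transposed differential matches the Koszul one. You can bypass it entirely:

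\begin{itemize}
\item The map $\partial^3$ is adjoint to the multiplication $H\ot L_3\to H$, so $\ker\partial^3=\mathrm{Ann}_H(\mathfrak{m})=\mathrm{soc}(H)=\1$ directly.
\item Then $\mathrm{im}\,\partial^3\subseteq\ker\partial^2$, and both have FPdim $d-1$; the second value uses exactness at $P^1$. Hence they coincide.
\end{itemize}

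This makes your FPdim bookkeeping non-circular, whereas as written it presupposes exactness. There is also a slip there: $\ell d-(d-1)$ is the FPdim of $\ker\partial^1$, not of $\ker\partial^2$. The only remaining input is $\partial^2\partial^3=0$. This holds because $\1$ occurs in $L_3^{\ot 3}$ with multiplicity $\dim\Hom(L_3,L_3\ot L_3)=1$. So the copy of $\1$ used to define $\partial^3$ inside $L_3\ot\Lambda^2L_3$ automatically lies in $\Lambda^2L_3\ot L_3$ as well.

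Compared with the paper, your route is self-contained and elementary for this case. The citation to \cite{E} buys a general theory that also covers objects whose exterior algebra is longer, where quadraticity and duality alone would not control the middle cohomology.
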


\begin{proof}
(1) By Lemma \ref{haH}(2), $P_1=H={\rm Sym}(L_3)$, so we can consider the (graded) Koszul complex $K^{\bullet}(L_3)={\rm Sym}(L_3)\ot \Lambda^{\bullet}(L_3)=H\ot \Lambda^{\bullet}(L_3)$  
$$0\to H\xrightarrow{\partial^3}H\ot L_3\xrightarrow{\partial^2}H\ot L_3\xrightarrow{\partial^1}H\to 0$$
introduced in \cite[Definition 2.1]{E}. 
Then by \cite[Proposition 5.1]{E}, we have that $H^0(K^{\bullet}(L_3))=H^3(K^{\bullet}(L_3))=\1$ and $H^1(K^{\bullet}(L_3))=H^2(K^{\bullet}(L_3))=0$. This implies that we have an exact sequence
$$\1\to H\xrightarrow{\partial^3}H\ot L_3\xrightarrow{\partial^2}H\ot L_3\xrightarrow{\partial^1}H\xrightarrow{\varepsilon}\1,$$
so that $H$ is a $(p-3,3)$-Koszul algebra in the sense of \cite[Definition 5.3]{E}. 
Now this exact sequence can be extended periodically to give a $4$-periodic minimal projective resolution of $\1$ given by
\begin{equation}\label{perminres}
\cdots \xrightarrow{\partial^2}P_3\xrightarrow{\partial^1}P_{1}\xrightarrow{\varepsilon} P_1\xrightarrow{\partial^3}P_3\xrightarrow{\partial^2}P_3\xrightarrow{\partial^1}P_1\xrightarrow{\varepsilon}\1
\end{equation}
(as $H\ot L_3=P_1\ot L_3=P_3$), as claimed.
 
(2) Since $\Hom(P_1,\1)=\mathbf{k}$ and $\Hom(P_3,\1)=0$, the additive structure and grading follow from (1), so in particular, $y^2=0$ and $xy=yx$. The fact that $x$ has infinite order is obtained analogously to \cite[Corollary 5.6]{E}. 

(3) For every odd $i$, $1\le i\le p-2$, tensoring the $4$-periodic minimal projective resolution of $\1$ (\ref{perminres}) with $L_i$ yields a $4$-periodic minimal projective resolution of $L_i$ given by
\begin{equation}\label{perminresi}
\cdots \xrightarrow{} P_i\xrightarrow{}P_3\ot L_i\xrightarrow{}P_3\ot L_i\xrightarrow{}P_i\xrightarrow{\varepsilon_i}L_i.
\end{equation} 
Thus, the claim follows from (\ref{compextn}).
\end{proof}

\subsection{The bound quiver of $\mathscr{Z}(\Ver_p^+)$}\label{sec:The quiver} 
Specialize \S\ref{sec:Quivers associated to algebra objects} to the case $\C:=\Ver_p^+$ and $A=H={\rm Sym}(L_3)$.

\begin{corollary}\label{cor:symmetric}
Let
$\C=\Ver_p^+$ and $H=\Sym(L_3)$.
Then the bound quiver of the category
\[
\mathscr{Z}(\Ver_p^+)={\rm Mod}_{\Ver_p^+}(H)
\]
is determined by the algebra 
$F(H)$. Moreover, since
\[
\Sym(L_3)
=
T^\bullet(L_3)
/\langle\Lambda^2(L_3)\rangle,
\]
the quiver is generated by $F(L_3)$, 
all defining relations are homogeneous quadratic relations coming from the
copy of $
\Lambda^2(L_3)\subset L_3\otimes L_3$, and there is exactly one relation for each arrow in the quiver.
\end{corollary}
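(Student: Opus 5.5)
The plan is to combine Theorem \ref{main result}(1) with Proposition \ref{prop:quiver-monad}, then read off generators and relations from the presentation of $\Sym(L_3)$ as a quotient of the tensor algebra.

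\textbf{Step 1: identification of the quiver.} By Theorem \ref{main result}(1), $\mathscr{Z}(\Ver_p^+)\simeq {\rm Mod}_{\Ver_p^+}(H)$. Proposition \ref{prop:quiver-monad} applied to $A=H$ then says the bound quiver is that of the algebra $F(H)\in{\rm Bimod}(S)$. By Theorem \ref{main result}(3), the vertices are the simples $L_i$, $i$ odd.

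\textbf{Step 2: generators.} The functor $F:\C\to{\rm Bimod}(S)$ is a $\mathbf{k}$-linear, exact tensor functor. It therefore carries the quotient presentation
$$T^\bullet(L_3)\twoheadrightarrow T^\bullet(L_3)/\langle\Lambda^2(L_3)\rangle=\Sym(L_3)$$
to a presentation $F(H)=T_S(F(L_3))/\langle F(\Lambda^2L_3)\rangle$. Here $T_S$ denotes the tensor algebra over $S$, and we use $F(T^n(L_3))=F(L_3)^{\ot_S n}$. The presentation itself holds because, in a symmetric category with $\mathrm{char}\ne2$, $\Sym$ is by definition the quotient by the image of $\id-{\rm c}$, which equals $\Lambda^2$ (Remark \ref{remarkpge7}). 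The bimodule $F(L_3)$ has $e_iF(L_3)e_j=\Hom(L_j,L_3\ot L_i)$ up to the convention on arrow direction, of dimension $d_{ij}\in\{0,1\}$.

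To see that this is the arrow space, i.e.\ that $F(\mathfrak m/\mathfrak m^2)=F(L_3)$ and the presentation is minimal, use that $H$ is graded with $\mathfrak m=\bigoplus_{i\ge1}S^i(L_3)$ and $\mathfrak m^2=\bigoplus_{i\ge2}S^i(L_3)$. Alternatively, this is consistent with Theorem \ref{cohomology ring}(3): ${\rm Ext}^1(L_i,L_j)$ has dimension $d_{ij}$.

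\textbf{Step 3: relations are quadratic.} The ideal is generated by $F(\Lambda^2L_3)\subset F(L_3)\ot_S F(L_3)$, so all relations are homogeneous quadratic, of path length $2$.

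\textbf{Step 4: one relation per arrow (the main obstacle).} The number of minimal relations from vertex $i$ to vertex $j$ equals $\dim{\rm Ext}^2(L_i,L_j)$. It can also be read directly as $\dim e_jF(\Lambda^2L_3)e_i=\dim\Hom(L_j,\Lambda^2L_3\ot L_i)$.

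First I would note that $\Lambda^2(L_3)\cong L_3$ in $\Ver_p^+$. From Remark \ref{remarkpge7} for $p\ge7$, ${\rm c}$ acts by $-1$ exactly on the summand $L_3$ of $L_3\ot L_3$. For $p=5$, $S^2(L_3)=\1$ by Example \ref{example5}, so again $\Lambda^2=L_3$. Hence $\dim e_jF(\Lambda^2L_3)e_i=d_{ij}$, the number of arrows between $i$ and $j$.

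Second, I would check that these generators are minimal, i.e.\ that no relation is redundant. Here I would use Theorem \ref{cohomology ring}: the minimal resolution has $P^2=P_3\ot L_i$, so ${\rm Ext}^2(L_i,L_j)=d_{ij}\mathbf{k}$ in Theorem \ref{cohomology ring}(3). This matches exactly, which gives one relation for each arrow. Since $d_{ij}=d_{ji}$ by self-duality of $L_3$, the matching of relations to arrows is compatible with the direction of the arrows.
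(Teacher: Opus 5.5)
Your proposal is correct and follows essentially the same route as the paper: apply the exact tensor functor $F$ to the presentation $\Sym(L_3)=T^\bullet(L_3)/\langle\Lambda^2(L_3)\rangle$, and use $\Lambda^2(L_3)\cong L_3$ to see that the relation bimodule is a degree-$2$ copy of the arrow bimodule $F(L_3)$. Your extra minimality check via ${\rm Ext}^2(L_i,L_j)=d_{ij}\mathbf{k}$ from Theorem \ref{cohomology ring} is consistent but not needed: for a homogeneous quadratic presentation the degree-$2$ part of the ideal is exactly $F(\Lambda^2(L_3))$, so no relation can be redundant.
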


\begin{proof}
The algebra $H={\rm Sym}(L_3)$ is presented as a quotient of the tensor algebra $T^\bullet (L_3)$
by the ideal generated by 
$$L_3=\wedge^2(L_3)\subset T^2(L_3)=L_1\oplus L_3,\text{  for $p=5$},$$
and 
$$L_3=\wedge^2(L_3)\subset T^2(L_3)=L_1\oplus L_3\oplus L_5,\text{  for $p\ge 7$}.$$
Moreover the relations are precisely the intersection of the kernel of the canonical map
$T^\bullet (L_3)\to H$ with $T^2(L_3)$.
Thus, the algebra $F(H)$ is generated by $F(L_3)$ (in degree $1$) with relations which are also $F(L_3)$ but now 
in degree $2$; moreover we have exactly one relation for each arrow in the quiver $F(L_3)$.

Since the functor $F$ is exact (as $\C$
is semisimple) the result follows.
\end{proof}

By Corollary \ref{cor:symmetric}, the quiver $Q$ of $\mathscr{Z}(\Ver_p^+)$ is given by
\begin{equation}\label{quiverp}
\xymatrix{
\bullet_1\ar@/^.5pc/[r]^{b_{1}} & \bullet_3\ar@/^.5pc/[l]^{a_{1}}\ar@(ul,ur)^{d_{1}}\ar@/^.5pc/[r]^{b_{2}}&\bullet_5\ar@/^.5pc/[r]^{b_{3}} \ar@/^.5pc/[l]^{a_{2}}\ar@(ul,ur)^{d_{2}}
& \bullet_7 \ar@/^.5pc/[l]^{a_{3}}\ar@(ul,ur)^{d_{3}} & \cdots & \bullet_{p-4}\ar@/^.5pc/[r]^{b_{\frac{p-3}{2}}}\ar@(ul,ur)^{d_{\frac{p-5}{2}}} & \bullet_{p-2}\ar@/^.5pc/[l]^{a_{\frac{p-3}{2}}}
\ar@(ul,ur)^{d_{\frac{p-3}{2}}}}.
\end{equation}

Corollary \ref{cor:symmetric} implies the following result.

\begin{theorem}\label{reptype7}
For every prime $p\ge 7$, the category $\mathscr{Z}(\Ver_p^+)$ is wild.
\end{theorem}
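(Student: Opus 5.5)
Since $\mathscr{Z}(\Ver_p^+)$ is equivalent to the category of finite dimensional modules over the basic algebra $\Lambda:=kQ/I$ given by the bound quiver (\ref{quiverp}) (Proposition \ref{prop:quiver-monad} and Corollary \ref{cor:symmetric}), it suffices to show that $\Lambda$ is wild. The plan is to produce a quotient of $\Lambda$ (or an idempotent subalgebra $e\Lambda e$, or a full subcategory of modules of bounded Loewy length) whose module category is already known to be wild. Wildness passes from a quotient algebra $\Lambda/J$ to $\Lambda$, since $\Lambda/J$-modules form a full exact subcategory of $\Lambda$-modules closed under the relevant operations. The relations in $I$ are homogeneous quadratic, so $\Lambda$ is graded. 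The quotient $\Lambda/\mathrm{rad}^2\Lambda$ is therefore the radical-square-zero algebra of the same quiver $Q$, and it is a quotient of $\Lambda$.

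For radical-square-zero algebras there is a classical criterion of Gabriel. The category of modules over $kQ/\mathrm{rad}^2$ is stably equivalent to the category of representations of the separated quiver $Q^s$. Moreover, $kQ/\mathrm{rad}^2$ has finite, respectively tame, type iff $Q^s$ is a disjoint union of Dynkin, respectively Dynkin or Euclidean, quivers. Otherwise it is wild.

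The key step is to compute $Q^s$ for $p\ge 7$. The vertices of $Q^s$ are $i,i'$ for $i\in\{1,3,\dots,p-2\}$, with an arrow $i\to j'$ for each arrow $i\to j$ of $Q$. Take $p\ge 7$, so that vertices $3$ and $5$ both exist and both carry loops $d_1,d_2$, with arrows $a_1,b_1,a_2,b_2$ (and $b_3,a_3$ if $p\ge 9$). Then vertex $3$ of $Q^s$ has arrows to $1',3',5'$, and vertex $5$ has arrows to $3',5'$ and, when $p\ge 9$, to $7'$. The connected component containing $3$ thus contains the path $1'-3-3'-5-5'$, and in addition:
\begin{itemize}
\item the edge $1-1$...
\end{itemize}
Rather than fuss over this, I will just exhibit the non-Dynkin, non-Euclidean subgraph. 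In $Q^s$ the edges are $1\!-\!3'$, $3\!-\!1'$, $3\!-\!3'$, $3\!-\!5'$, $5\!-\!3'$, $5\!-\!5'$, and, for $p=7$, $5$ is the last vertex. The component therefore contains the vertices $1,3',5,5',3,1'$, with edges $1\!-\!3'$, $3'\!-\!3$, $3'\!-\!5$, $3\!-\!1'$, $3\!-\!5'$, $5\!-\!5'$. This gives the cycle $3'-3-5'-5-3'$ with extra branches $1$ at $3'$ and $1'$ at $3$. A cycle with attached branches is not Euclidean (the only Euclidean graph containing a cycle is $\tilde A_n$ itself) and not Dynkin, so the component is wild. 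For $p\ge 9$, the component of $Q^s$ is even larger and contains this graph as a subgraph, so wildness follows a fortiori. One can alternatively note that vertices $3,5$ with loops and the double arrows between them already force wildness.

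The main obstacle is bookkeeping rather than ideas. First, one must make sure the quiver (\ref{quiverp}) is accurate near vertex $p-2$, in particular whether a loop sits at $p-2$; for $p=7$ I use only loops at $3$ and $5$, which are present since $L_3\otimes L_5\ni L_5$ and $L_3\otimes L_3\ni L_3$. Second, one must correctly invoke Gabriel's theorem, with wild type transferring from $kQ/\mathrm{rad}^2$ to $\Lambda$ via the quotient $\Lambda\twoheadrightarrow\Lambda/\mathrm{rad}^2\Lambda$. Note that for $p=5$ the separated quiver is $1-3'-3-1'$ with $3-3'$ doubled only through the loop, giving $\tilde A$ or Dynkin, which is consistent with the finite type found later.
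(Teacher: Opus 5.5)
Your proposal is correct and follows the paper's argument: pass to the radical-square-zero quotient $\mathbf{k}Q/J^2$, apply the separated-quiver criterion (Gabriel, Dlab--Ringel), and observe that $Q_s$ is neither Dynkin nor Euclidean. Your explicit computation of the component (the $4$-cycle $3'\!-\!3\!-\!5'\!-\!5\!-\!3'$ with pendant vertices $1$ and $1'$, which properly contains $\tilde A_3$) supplies the verification the paper leaves implicit, and you were right to check the loop at $p-2=5$, since without it the graph would be the Euclidean $\tilde D_5$. Two cosmetic fixes: delete the unfinished itemized fragment, and note that for $p=5$ the separated quiver is simply the Dynkin graph $A_4$, with no doubled edge and no $\tilde A$ case.
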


\begin{proof}
Let $(Q,I)$ be the bound quiver of $\mathscr{Z}(\Ver_p^+)$. Let $J:={\rm Rad}(\mathbf{k}Q)$. Since there is an obvious surjective algebra map $\mathbf{k}Q/I\twoheadrightarrow \mathbf{k}Q/J^2$, it suffices to prove that the algebra $\mathbf{k}Q/J^2$ is wild.

Since $\mathbf{k}Q/J^2$ is a radical-square-zero basic algebra, it is sufficient to prove that the hereditary algebra $\mathbf{k}Q_s$ is wild, where $Q_s$ is the {\em separated} quiver of $Q$ (see \cite{DF,DR}). 
Since $Q_s$ is neither Dynkin nor Euclidean, it follows that $Q_s$ is wild, as desired.
\end{proof}

We now turn to describe the algebra $F(H)$, namely the admissible ideal of generators $I\subset J^2\subset \mathbf{k}Q$.
To do so, we need to

1) Choose a basis in the vector space $F(L_3)$;

2) Find one quadratic relation for each arrow in $F(H)$.

It is convenient to perform the computation in the category of tilting modules over $SL(2)$
(since there is a canonical full tensor functor from this category to $\Ver_p$, see  e.g. \cite{V}).
Thus, let $T(\lambda)$ be the indecomposable tilting $SL(2)$-module with highest weight 
$\lambda \in \mathbb{Z}_{\ge 0}$. We will need only $0\le \lambda \le p-1$ (since the image 
of such $T(\lambda)$ in $\Ver_p$ is precisely $L_{\lambda +1}$ or $0$ if $\lambda =p-1$);
moreover we can restrict ourselves to even $\lambda$ since we are interested in $\Ver_p^+$.

For $\lambda =2$, the module
$T(2)$ (corresponding to $L_3\in \Ver_p^+$) is the adjoint representation of $SL(2)$; we choose 
a standard $\mathfrak{sl}_2$-triple basis $\{ e,h,f\}$ of $T(2)$. 
For $0\le \lambda \le p-1$, the module $T(\lambda)$ is a simple highest weight module, and 
we choose a highest weight vector $v_\lambda \in T(\lambda)$, so that $ev_\lambda =0$ and
$hv_\lambda =\lambda v_\lambda$.

Any map of $SL(2)$-modules $T(\lambda)\to ?$ is completely determined by the image of the vector
$v_\lambda$ (this image must be a vector of weight $\lambda$ annihilated by $e$). We now choose the unique (up to scalar) highest weight $SL(2)$-module maps corresponding to the three summands of $T(2)\ot T(\lambda)$; that is, 
we choose the maps 
$$a=a_{\lambda -1}: T(\lambda)\to T(2)\ot T(\lambda -2),\quad v_\lambda\mapsto e\ot v_{\lambda -2}\quad (2\le \lambda \le p-3),$$
$$d=d_{\lambda}: T(\lambda)\to T(2)\ot T(\lambda),\quad v_\lambda\mapsto \lambda h\ot v_\lambda +2e\ot fv_\lambda\quad (2\le \lambda \le p-3),$$
and 
$$b=b_{\lambda +1}: T(\lambda)\to T(2)\ot T(\lambda +2),$$
$$v_\lambda\mapsto (\lambda+2)(\lambda+1)f\ot v_{\lambda +2}-(\lambda+1)h\ot fv_{\lambda +2}-e\ot f^2v_{\lambda +2}\quad (0\le \lambda \le p-5).$$

\begin{proposition} 
After choosing a nonzero basis vector in each $1$-dimensional arrow space, the bimodule $F(L_3)$ identifies with the arrow bimodule of the bound quiver attached to $\mathscr{Z}(\Ver_p^+)$. Equivalently, the degree-one generator of the algebra $F(H)$ is exactly the bimodule of arrows, and the quadratic relations come from the copy of $\Lambda^2(L_3)\subset L_3\otimes L_3$. 
\end{proposition}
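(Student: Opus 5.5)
The plan is to compute $F(L_3)$ explicitly as an $S$-bimodule, compare it with the quiver (\ref{quiverp}), and then read off degree one and the relations from Corollary \ref{cor:symmetric}. Under the equivalence (\ref{theequivF}), the $(i,j)$-component of the bimodule $F(L_3)=L_3\ot-$ is $e_iF(L_3)e_j=\Hom_{\Ver_p^+}(L_i,L_3\ot L_j)$, for $i,j$ odd in $\{1,\dots,p-2\}$. By the fusion rules of \S\ref{sec:The category Verp}, $L_3\ot L_j=L_{j-2}\oplus L_j\oplus L_{j+2}$, where $L_{-1}=0$, the summand $L_1$ is absent when $j=1$ (since $L_3\ot L_1=L_3$), and $L_{p}$ is truncated. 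So each space is at most one-dimensional, and it is nonzero exactly when $|i-j|=2$, or when $i=j\neq 1$. These are precisely the arrows $a_k,b_k,d_k$ of (\ref{quiverp}): there is no loop at vertex $1$, and the top vertex $p-2$ keeps its loop because $L_3\ot L_{p-2}=L_{p-4}\oplus L_{p-2}$. Choosing a nonzero vector in each one-dimensional space (for instance the images of the explicit tilting maps $a_{\lambda-1},d_\lambda,b_{\lambda+1}$ under the full functor from tilting $SL(2)$-modules to $\Ver_p$, which are nonzero highest weight maps) identifies $F(L_3)$ with the arrow bimodule $\mathbf{k}Q_1$.

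Next I would show that $F(H)$ is a basic algebra whose degree-zero part is $F(\1)=S$ and whose radical is $F(\mathfrak m)$. This holds because $H$ is local with $\mathfrak m=\bigoplus_{i\ge1}S^i(L_3)$ (Lemma \ref{haH}(3)), $F$ is exact and faithful, and $F(\mathfrak m)$ is a nilpotent graded ideal. Since $H$ is generated by $S^1(L_3)=L_3$, the functor $F$ carries the surjection $T^\bullet(L_3)\twoheadrightarrow H$ to a surjection $T_S(F(L_3))\twoheadrightarrow F(H)$. Here I use that $F$ is monoidal, so $F(L_3^{\ot n})=F(L_3)^{\ot_S n}$, which is the path space of length $n$. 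Hence $F(\mathfrak m)/F(\mathfrak m)^2=F(L_3)$, i.e. the arrow space of the Gabriel quiver of $F(H)$ is exactly $F(L_3)$, and the quiver is $Q$. The relations come from Corollary \ref{cor:symmetric}: the kernel of $T^\bullet(L_3)\to H$ is generated by $\Lambda^2(L_3)\cong L_3\subset L_3\ot L_3$, and exactness of $F$ gives $I=\langle F(\Lambda^2L_3)\rangle\subset \mathbf{k}Q_2$, with one relation per arrow because $F(\Lambda^2 L_3)\cong F(L_3)$ as bimodules.

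The main obstacle is the identification $\Lambda^2(L_3)\cong L_3$ inside $L_3\ot L_3$, and the check that the resulting ideal is admissible, i.e. that $J^N\subset I$. Admissibility follows from finite-dimensionality: $S^i(L_3)=0$ for $i>p-3$ by (\ref{symmalg}). The identification of $\Lambda^2$ uses the sign computation of Remark \ref{remarkpge7} for $p\ge7$ (and the analogous statement for $p=5$), which shows that the $(-1)$-eigenspace of $\mathrm{c}_{L_3,L_3}$ is exactly $L_3$. A further subtlety is basis independence: rescaling the chosen arrow vectors only rescales the relations, so the bound quiver is well defined up to isomorphism.
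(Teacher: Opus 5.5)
Your proposal is correct and follows essentially the same route as the paper: both rest on Corollary~\ref{cor:symmetric} (the algebra $F(H)$ is generated in degree one by $F(L_3)$, and its relations are given by $F(\Lambda^2 L_3)\cong F(L_3)$), and both use the tilting maps $a,b,d$ to fix basis vectors in the one-dimensional arrow spaces. Your version is more complete than the paper's sketch: you check the arrow spaces directly from the fusion rules, including the absence of a loop at vertex $1$ and the loop at $p-2$. You also justify $\mathrm{rad}\,F(H)=F(\mathfrak m)$ and $F(\mathfrak m)/F(\mathfrak m)^2\cong F(L_3)$, which the paper leaves implicit.
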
 

\begin{proof} 
By Corollary \ref{cor:symmetric}, the bound quiver of $\mathscr{Z}(\Ver_p^+)\cong {\rm Mod}_{\Ver_p^+}(H)$ is determined by the algebra $F(H)$, and since $H=\Sym(L_3)=T^\bullet(L_3)/\langle \Lambda^2(L_3)\rangle$, the quiver is generated in degree $1$ by $F(L_3)$, with homogeneous quadratic relations coming from the copy of $\Lambda^2(L_3)\subset L_3\otimes L_3$. In particular, the arrow bimodule is the degree-one piece of $F(H)$, namely $F(L_3)$. Now working with the adjoint tilting module $T(2)$ corresponding to $L_3$, and for each even highest weight $\lambda$ choosing a highest weight vector $v_\lambda\in T(\lambda)$, makes it explicit. The three direct summands in 
$$T(2)\otimes T(\lambda)\cong T(\lambda-2)\oplus T(\lambda)\oplus T(\lambda+2)$$
are then represented by the maps 
$a,b,d$ chosen above. These are exactly the chosen basis vectors in the three $1$-dimensional Hom-spaces corresponding to the three summands. Thus, after fixing the basis in each direct summand, the endofunctor $F(L_3)$ decomposes into the arrow spaces of the quiver, with the three components giving the arrows $a,b,d$ (and their translates at the other vertices). This is precisely the statement that $F(L_3)$ identifies with the arrow bimodule. 
\end{proof}

\begin{theorem}\label{idealofrelations}
The admissible ideal of relations $I\subset J^2$ for $\mathscr{Z}(\Ver_p^+)$ is generated by the following relations:
\begin{equation} \label{qrel1}
d_{\lambda-2} a_{\lambda -1}-a_{\lambda -1} d_\lambda\quad (\lambda =2,4,\dots ,p-3),
\end{equation}
\quad 
so $a_1d_2\in I$, 
\begin{equation} \label{qrel2}
d_{\lambda+2} b_{\lambda +1}-b_{\lambda +1} d_\lambda\quad (\lambda =0,2,\dots ,p-5),
\end{equation}
so $d_2b_1\in I$,
and 
\begin{equation} \label{qrel3}
d_{\lambda}^2-\frac{\lambda^2}{\lambda+1}a_{\lambda +1}b_{\lambda +1}+\frac{(\lambda+2)^2}{\lambda+1}b_{\lambda -1} a_{\lambda -1}\quad (\lambda =2,4,\dots ,p-3),
\end{equation}
so $d_{p-3}^2-\frac12b_{p-4}a_{p-4}\in I$.  
Note that \eqref{qrel1}-\eqref{qrel2} can be restated as follows: the element $d:=d_2+d_4+\ldots +d_{p-3}$ is central.

Thus, there is an equivalence $\mathscr{Z}(\Ver_p^+)\simeq {\rm Mod}(\mathbf{k}Q/I)$ of Abelian categories, where $Q$ is the quiver (\ref{quiverp}).
\end{theorem}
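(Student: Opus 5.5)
By Corollary \ref{cor:symmetric}, $\mathscr{Z}(\Ver_p^+)\simeq{\rm Mod}(\mathbf{k}Q/I)$, where $I$ is generated by exactly one quadratic relation per arrow. These relations are the image under $F$ of the copy $\Lambda^2(L_3)=L_3\subset L_3\otimes L_3$. So the plan is to write down, for each even $\lambda$ and each summand $T(\lambda\pm2)$, $T(\lambda)$ of $T(2)\otimes T(\lambda)$, the composite
$$T(\mu)\xrightarrow{\alpha}T(2)\otimes T(\nu)\xrightarrow{\id\otimes\beta}T(2)\otimes T(2)\otimes T(\lambda),$$
with $\alpha,\beta\in\{a,b,d\}$. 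We then project onto the antisymmetric summand $\Lambda^2T(2)\otimes T(\lambda)\cong T(2)\otimes T(\lambda)$ and demand that the resulting combination of length-two paths from $\mu$ to $\lambda$ vanish.

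Concretely, $\Lambda^2 T(2)\cong T(2)$ via the bracket, and the antisymmetric part is cut out by $\id-c$ with $c$ the flip. A combination $\sum\gamma_{\alpha\beta}(\id\otimes\beta)\alpha$ lies in the relation space $I_{\mu\lambda}$ exactly when it is killed by the symmetrization $T(2)\otimes T(2)\to S^2T(2)$. Equivalently, the relation space is the image of
$$\Hom(T(\mu),\Lambda^2T(2)\otimes T(\lambda))\cong\Hom(T(\mu),T(2)\otimes T(\lambda))$$
inside $\Hom(T(\mu),T(2)^{\otimes2}\otimes T(\lambda))$, whose dimension is $0$ or $1$. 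Since every such map is determined by the image of $v_\mu$, it suffices to compute the images of highest weight vectors. I will split into the three cases $\mu=\lambda\pm2$ and $\mu=\lambda$.

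\emph{Case $\mu=\lambda-2$.} The paths are $d_{\lambda-2}a_{\lambda-1}$ and $a_{\lambda-1}d_\lambda$, and the highest-weight vector images are explicit. We check that their difference is antisymmetric in the two $T(2)$ factors, i.e.\ lies in the image of $[\,,\,]^*$. In fact one can guess this conceptually: $d$ is, up to scalar, the map $v\mapsto\sum_i x_i\otimes x_i^*v$ (the Casimir-type coaction), and the Casimir is central. Hence $d$ commutes with $a$ and $b$ modulo antisymmetric terms, which gives \eqref{qrel1} and \eqref{qrel2}. The case $\mu=\lambda+2$ is symmetric. At the boundary, where $d_0$ or $d_{p-1}$ is absent, the same computation degenerates to a monomial relation such as $a_1d_2$ or $d_2b_1$.

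\emph{Case $\mu=\lambda$.} This is the main obstacle. There are three paths, $d_\lambda^2$, $a_{\lambda+1}b_{\lambda+1}$ and $b_{\lambda-1}a_{\lambda-1}$, and we need the unique linear combination lying in $\Lambda^2T(2)\otimes T(\lambda)$. I would first compute each composite applied to $v_\lambda$ in the weight-$\lambda$ space of $T(2)^{\otimes2}\otimes T(\lambda)$. I would then apply the symmetrizer and solve the resulting $2\times3$ (or so) linear system over $\mathbb{F}_p$, tracking the weight vectors $e\otimes f$, $h\otimes h$, $f\otimes e$, and so on. The nontrivial point is obtaining the coefficients $-\lambda^2/(\lambda+1)$ and $(\lambda+2)^2/(\lambda+1)$, using the normalizations of $a,b,d$ fixed above, together with the boundary specialization at $\lambda=p-3$ (where $\lambda+2\equiv-1$, giving $-\tfrac12 b_{p-4}a_{p-4}$).

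Finally, to conclude that these relations generate $I$, we count. We have one relation per arrow: $\frac{p-3}{2}$ relations of each of the first two types and $\frac{p-3}{2}$ of the third, matching the numbers of arrows $a$, $b$ and $d$. Each obtained relation is nonzero, so by Corollary \ref{cor:symmetric} they span the quadratic relation space.
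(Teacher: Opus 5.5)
Your plan is essentially the paper's proof. The paper also evaluates $da,ad$, $db,bd$ and $d^2,ab,ba$ on $v_\lambda$, reads off the combinations that vanish in $S^2(T(2))\otimes T(\lambda)$, and takes generation from the one-relation-per-arrow statement of Corollary~\ref{cor:symmetric}. Your third computation does close up: the coefficients of $h\otimes h\otimes v_\lambda$ and $e\otimes e\otimes f^2v_\lambda$ alone force $-\lambda^2/(\lambda+1)$ and $4+\lambda^2/(\lambda+1)=(\lambda+2)^2/(\lambda+1)$.

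One point needs to be stated explicitly: ``killed by symmetrization'' must be tested in $\Ver_p$, not among tilting modules. At $\lambda=p-3$ the tilting-level relation still contains $a_{p-2}b_{p-2}$ with coefficient $9/2\neq 0$. That term disappears only because the composite factors through the negligible $T(p-1)$, which is what the paper notes, and not because $\lambda+2\equiv -1$.
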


\begin{proof}
We compute some quadratic relations between the maps $a, b, d$ extended
to maps $T(\lambda)\to S^2(T(2))\ot T(\lambda)$; the images of these relations in ${\rm Fun}(\Ver_p^+,\Ver_p^+)$
will give us the relations in the bound quiver.

First we compute
$$da(v_\lambda)=d(e\ot v_{\lambda-2})=e\ot (\lambda -2)h\ot v_{\lambda -2}+2e\ot e\ot fv_{\lambda -2},$$
and
\begin{eqnarray*}
\lefteqn{
ad(v_\lambda)=a(\lambda h\ot v_\lambda +2e\ot fv_\lambda)}\\
& = &\lambda h\ot e\ot v_{\lambda -2}+2e\ot f(e\ot v_{\lambda -2})\\
& = &
\lambda h\ot e\ot v_{\lambda -2}+2e\ot (-h)\ot v_{\lambda -2}+2e\ot e \ot fv_{\lambda -2}.
\end{eqnarray*}
Thus, 
$$d_{\lambda-2} a_{\lambda -1}(v_\lambda)=a_{\lambda -1} d_\lambda(v_\lambda)\quad (2\le \lambda \le p-3).$$

Second we compute 
\begin{eqnarray*}
\lefteqn{
db(v_\lambda)=d((\lambda+2)(\lambda+1)f\ot v_{\lambda +2}-(\lambda+1)h\ot fv_{\lambda +2}-e\ot f^2v_{\lambda +2})}\\
& = & (\lambda+2)(\lambda+1)f\ot ((\lambda+2)h\ot v_{\lambda +2}+2e\ot fv_{\lambda+2})\\
& - & (\lambda+1)h\ot f((\lambda+2)h\ot v_{\lambda +2}+2e\ot fv_{\lambda+2})\\
& - & e\ot f^2((\lambda+2)h\ot v_{\lambda +2}+2e\ot fv_{\lambda+2})
\end{eqnarray*}
and
\begin{eqnarray*}
\lefteqn{
bd(v_\lambda)=b(\lambda h\ot v_\lambda +2e\ot fv_\lambda)}\\
& = & \lambda h\ot ((\lambda+2)(\lambda+1)f\ot v_{\lambda +2}-(\lambda+1)h\ot fv_{\lambda +2}-e\ot f^2v_{\lambda +2})\\
& + & 2e\ot f((\lambda+2)(\lambda+1)f\ot v_{\lambda +2}-(\lambda+1)h\ot fv_{\lambda +2}-e\ot f^2v_{\lambda +2}).
\end{eqnarray*}
Thus, 
$$d_{\lambda+2} b_{\lambda +1}(v_\lambda)=b_{\lambda +1} d_\lambda(v_\lambda)\quad (0\le \lambda \le p-5).$$

Finally we compute
\begin{eqnarray*}
\lefteqn{ab(v_\lambda)=a((\lambda+2)(\lambda+1)f\ot v_{\lambda +2}-(\lambda+1)h\ot fv_{\lambda +2}-e\ot f^2v_{\lambda +2})}\\
& = &(\lambda+2)(\lambda+1)f\ot e\ot v_{\lambda}-(\lambda+1)h\ot f(e\ot v_{\lambda})-e\ot f^2(e\ot v_{\lambda}),
\end{eqnarray*}
$$ba(v_\lambda)=b(e\ot v_{\lambda -2})=e\ot (\lambda (\lambda-1)f\ot v_{\lambda}-(\lambda-1)h\ot fv_{\lambda}-e\ot f^2v_{\lambda}),$$
and
\begin{eqnarray*}
\lefteqn{
d^2(v_\lambda)=d(\lambda h\ot v_\lambda +2e\ot fv_\lambda)}\\
& = & \lambda h\ot (\lambda h\ot v_\lambda +2e\ot fv_\lambda)+2e\ot f(\lambda h\ot v_\lambda +2e\ot fv_\lambda).
\end{eqnarray*}
Thus,  
$$d_{\lambda}^2(v_\lambda)-\frac{\lambda^2}{\lambda +1}a_{\lambda +1}b_{\lambda +1}(v_\lambda)+\frac{(\lambda+2)^2}{\lambda +1}b_{\lambda -1} a_{\lambda -1}(v_\lambda)=0\quad (2\le \lambda \le p-3)$$
(for $\lambda =p-3$ the summand $ab(v_\lambda)$ is negligible, so it is sent to zero by the functor to $\Ver_p$).
\end{proof} 

\begin{remark}
The relations of our quiver $Q$ admit obvious deformations obtained
by choosing the coefficients of the maps $a,b,d$ in \eqref{qrel1}-\eqref{qrel3} differently. It was shown to us by ChatGPT that for $p=7$ the resulting deformation of the algebra is not flat (see \S\ref{sec:Appendix}). However,
the deformation is flat if we keep relations \eqref{qrel1}-\eqref{qrel2} and deform
only relation \eqref{qrel3} (up to scaling of the generators this deformation is $1$-parameter). 
It would be interesting to investigate whether we have
similar behavior for $p>7$. \qed
\end{remark}

\begin{example}\label{quiver7}($p=7$)
Recall Example \ref{example7}, so that    
$\Ver_7^+=\langle \1, X, Y\rangle$, where $X:=L_3$ and $Y:=L_5$, and
$$X\ot X=\1\oplus X\oplus Y,\quad X\ot Y=X\oplus Y,\quad Y\ot Y=\1\oplus X.$$
By Theorem \ref{mainthm}, the Cartan matrix of $\mathscr{Z}(\Ver_7^+)$ is given by 
\[
{\rm C}=\left(
\begin{array}{ccc}
3 & 2 & 1\\ 2 & 6 & 3 \\ 1 & 3 & 5
\end{array}
\right);
\]
that is, we have
$$[P(\1):\1]=3,\,\,[P(\1):X]=2,\,\,[P(\1):Y]=1,\,\,[P(X):\1]=2,\,\,[P(X):X]=6,$$
$$[P(X):Y]=3,\,\,[P(Y):\1]=1,\,\,[P(Y):X]=3,\,\,[P(Y):Y]=5.$$
 
By Theorem \ref{idealofrelations},   
the quiver $Q$ of $\mathscr{Z}(\Ver_7^+)$ is given by
\[
\xymatrix{
\bullet_{\1}\ar@/^.5pc/[r]^{b_1} & \bullet_X\ar@/^.5pc/[l]^{a_1}\ar@(ul,ur)^{d_2}\ar@/^.5pc/[r]^{b_3}&\bullet_Y\ar@/^.5pc/[l]^{a_3}\ar@(ul,ur)^{d_4}
},
\]
and 
$
I=
\left\langle
-a_1d_2,\,d_2a_3-a_3d_4,\,d_2b_1,\,
d_4b_3-b_3d_2,\,
d_2^2-\frac{4}{3}a_3b_3+\frac{16}{3}b_1a_1,\,
d_4^2+\frac{36}{5}b_3a_3
\right\rangle$.
Equivalently, after renormalization and using that $p=7$, we have
$$
I=
\left\langle
a_1d_2,d_2a_3-a_3d_4,\,d_2b_1,\,
d_4b_3-b_3d_2,\,
d_2^2+a_3b_3+3b_1a_1,\,
d_4^2+3b_3a_3
\right\rangle.$$
In particular, the element $d:=d_2+d_4$ is central.

Note that translating into (\ref{quiverp}) yields the following bound quiver $(Q,I)$:
\[
\xymatrix{
\bullet_{\1}\ar@/^.5pc/[r]^{b_1} & \bullet_X\ar@/^.5pc/[l]^{a_1}\ar@(ul,ur)^{d_1}\ar@/^.5pc/[r]^{b_2}&\bullet_Y\ar@/^.5pc/[l]^{a_2}\ar@(ul,ur)^{d_2}
},
\]
$$
I=
\left\langle
a_1d_1,d_1a_2-a_2d_2,\,d_1b_1,\,
d_2b_2-b_2d_1,\,
d_1^2+a_2b_2+3b_1a_1,\,
d_2^2+3b_2a_2
\right\rangle.$$
In particular, the element $d:=d_1+d_2$ is central. \qed
\end{example}

\section{The category $\mathscr{Z}(\Ver_5^+)$}\label{sec:The category 5}
In this section we set $\mathscr{Z}:=\mathscr{Z}(\Ver_5^+)$. Recall Example \ref{example5}, so that $X:=L_3$,  
$\Ver_5^+=\langle \1, X\rangle$, and $X\ot X=\1\oplus X$.
By Theorem \ref{mainthm}, the Cartan matrix of $\mathscr{Z}$ is given by 
\[
{\rm C}=\left(
\begin{array}{cc}
2 & 1 \\ 1 & 3
\end{array}
\right);
\]
that is, we have
$$[P(\1):\1]=2,\,\,[P(\1):X]=1,\,\,[P(X):\1]=1,\,\,[P(X):X]=3.$$
The maximal ideal of $H={\rm Sym}(X)$ and its nonzero powers are 
$$\mathfrak{m}=X\oplus \1,\quad \mathfrak{m}^2=\1,$$
and we have
$$X\ot \mathfrak{m}=(X\ot X)\oplus X.$$
Then since the socle and cosocle of $H=P(\1)$ are $\1$, we have
\[
P(\1)=[\1\ X\ \1]^T,
\]  
and tensoring with the simple $X$ gives
\[
P(X)=[X\ (\1\oplus X)\ X]^T.
\]

\subsection{The bound quiver for $\mathscr{Z}(\Ver_5^+)$}\label{sec:The quiver 5} 
By Theorem \ref{idealofrelations},   
the quiver $Q$ of $\mathscr{Z}(\Ver_5^+)$ is given by
\[
\xymatrix{
\bullet_{\1}\ar@/^.5pc/[r]^{b} & \bullet_{X}\ar@/^.5pc/[l]^{a}\ar@(ul,ur)^{d}
}
\]
and the admissible ideal $I\subseteq J^2$ by $I=\langle ad,db,d^2+2ba\rangle$. In particular, the element $d$ is central.

\begin{corollary}\label{DB}
The category $\mathscr{Z}(\Ver_5^+)$ has exactly $10$ non-isomorphic indecomposable objects up to isomorphism.
\end{corollary}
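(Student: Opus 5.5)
We plan to identify the basic algebra $A:=\mathbf{k}Q/I$, with $I=\langle ad,db,d^2+2ba\rangle$, as a \emph{Brauer tree algebra}. The count of indecomposables for such algebras is classical: a Brauer tree algebra with $e$ edges and exceptional multiplicity $m$ has exactly $e(em+1)$ indecomposable modules (Gabriel--Riedtmann; it is stably equivalent to a Nakayama algebra). Here we expect $e=2$ and $m=2$, which gives $2(2\cdot 2+1)=10$. By Theorem \ref{idealofrelations} we have $\mathscr{Z}(\Ver_5^+)\simeq{\rm Mod}(A)$, so it suffices to count indecomposable $A$-modules.

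\textbf{Step 1: the candidate tree.} Take the Brauer tree $u - v - w$. The edge $uv$ corresponds to the vertex $\1$ and the edge $vw$ to the vertex $X$. The vertex $w$ is exceptional with multiplicity $2$, and the vertices $u,v$ have multiplicity $1$. The Brauer tree algebra has the following structure:
\begin{itemize}
\item the cyclic ordering at $v$ gives arrows $b:\1\to X$ and $a:X\to \1$;
\item the exceptional vertex $w$ gives a loop $d$ at $X$;
\item the vertex $u$ contributes nothing, since it has a single edge and multiplicity $1$.
\end{itemize}
Its standard relations are: the two maximal cycles at $X$ agree up to a nonzero scalar, i.e.\ $d^{2}=\lambda\, ba$; the ``mixed'' paths $ad$ and $db$ vanish; and the paths $aba$, $bab$ also vanish. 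Its Cartan matrix is $\begin{pmatrix}2&1\\1&1+2\end{pmatrix}$, which matches the Cartan matrix $\mathrm{C}$ of $\mathscr{Z}$ computed above from Theorem \ref{mainthm}. This is a good consistency check.

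\textbf{Step 2: matching the presentations.} Since $p=5\neq 2$, rescaling $d$ (or $a$) turns $d^2+2ba$ into $d^2-\lambda ba$ for any prescribed $\lambda\neq 0$. The remaining Brauer relations follow from those in $I$:
\begin{itemize}
\item $bab=-\tfrac12 d^2b=0$, using $db=0$;
\item $aba=-\tfrac12 ad^2=0$, using $ad=0$.
\end{itemize}
Conversely, every relation in $I$ is a Brauer relation. So $A$ is isomorphic to the Brauer tree algebra. As a sanity check, we should also confirm $\dim A=7=\sum_{i,j}\mathrm{C}_{ij}$ and compare with the Loewy structures $P(\1)=[\1\ X\ \1]^T$ and $P(X)=[X\ (\1\oplus X)\ X]^T$ listed above. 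This check rules out the possibility that $I$ generates further relations that would shrink the algebra.

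\textbf{Step 3: counting.} Apply the formula $e(em+1)$, or, to be self-contained, compute the Auslander--Reiten quiver directly; this is the main obstacle if we do not want to quote the formula. The plan for that is to go through the symmetric special biserial structure. After the change of variables, $A$ is a symmetric algebra. Its indecomposable non-projective modules correspond to strings in the special biserial quotient $A/\mathrm{soc}$, in which $d^2=0$ and $ba=0$; there are no bands, since the algebra is representation-finite. We would then:
\begin{itemize}
\item list the strings: $e_{\1}$, $e_X$, $a$, $b$, $d$, $da$-type combinations such as $bd^{-1}$ and $ad^{\pm}$, and so on;
\item check that exactly $8$ non-projective indecomposables arise;
\item add the $2$ projective-injectives $P(\1)$ and $P(X)$, giving $10$ in total.
\end{itemize}
The stable AR quiver should be $\mathbb{Z}A_{4}/\langle\tau^{2}\rangle$-like with $8$ vertices, consistent with $e\cdot em=8$.
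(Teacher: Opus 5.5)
Your proposal is correct and follows the paper's route: you identify $\mathbf{k}Q/I$ with the Brauer tree algebra of the tree with two edges and an exceptional leaf of multiplicity $2$, then apply the count $e(em+1)=10$ (the paper cites Janusz for this formula). The one difference is that you certify the Brauer-tree identification by matching the presentation relation by relation, which avoids the paper's appeal to a structural characterization based on the Loewy structure of $P(\1)$ and $P(X)$; in your optional Step 3, note also that $A/\mathrm{soc}(A)$ kills $ab$ as well, so it is the radical-square-zero algebra, and its eight strings are $e_{\1},e_X,a,b,d$ together with three alternating strings, which give $W_2$, $W_1$ and $S'$.
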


\begin{proof} 
We have just seen that the indecomposable projective objects of the category
$\mathscr{Z}(\Ver_5^+)$ have the following properties: their heads and socles are isomorphic, and
the quotient of the radical by the socle is either simple or a direct sum of two simple objects.
It follows that the algebra $\mathbf{k}Q/I$ is a Brauer tree algebra, see e.g. \cite[Definition 5.10.4]{Z}.
The associated tree is the unique tree with two edges and one exceptional vertex of multiplicity $2$
at one of the ends. Hence, $\mathscr{Z}(\Ver_5^+)$ is of finite representation type; that is, it has only
finitely many indecomposable objects up to isomorphism. Moreover, the number of indecomposable 
objects can be computed using formula of Janusz \cite[Theorem 6.1]{Ja}, which says that
there are $e(em+1)$ indecomposable objects, where $e$ is the number of edges and $m$ is the
multiplicity of the exceptional vertex. Since in our case $e=m=2$, we get that the number of indecomposable
objects is $10$.
\end{proof}

\begin{remark}
It was pointed out to us by D. Benson that the principal block of the alternating group $A_5$
(in characteristic $5$)
has the same Brauer tree, so the category $\mathscr{Z}(\Ver_5^+)$ is Morita equivalent
to the principal block of $A_5$ as an abelian category. \qed
\end{remark}

\subsection{The indecomposables in $\mathscr{Z}(\Ver_5^+)$}\label{sec:The indecomposables}
Let ${\rm Gr}(\mathscr{Z})$ be the Grothendieck ring of $\mathscr{Z}$,
and set 
\begin{equation}\label{firstind5}
V_1:=\mathfrak{m}^*,\,\,V_2:=\mathfrak{m},\,\,W_1:=X\ot \mathfrak{m}^*,\,\,W_2:=X\ot \mathfrak{m}.
\end{equation}

As the socle/cosocle assignment is additive with respect to direct sums, we have the following lemma.

\begin{lemma}\label{verifies}
If $V$ is an object in $\mathscr{Z}$ for which ${\rm Soc}(V)$ or ${\rm Cosoc}(V)$ is simple, then $V$ is indecomposable.  Furthermore, if ${\rm Soc}(V)$ is simple then any subobject $0\ne V'\subset V$ is also indecomposable, and the dual statement holds when ${\rm Cosoc}(V)$ is simple and $V\twoheadrightarrow V'\ne 0$ is a quotient. \qed
\end{lemma}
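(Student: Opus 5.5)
The plan is to use that $\mathscr{Z}$ is a Krull--Schmidt abelian category of finite length, and that ${\rm Soc}$ and ${\rm Cosoc}$ are additive: ${\rm Soc}(A\oplus B)={\rm Soc}(A)\oplus{\rm Soc}(B)$, and similarly for ${\rm Cosoc}$. Both facts hold in any finite abelian category. For the second, ${\rm Cosoc}(V)=V/{\rm Rad}(V)$ and ${\rm Rad}(A\oplus B)={\rm Rad}(A)\oplus{\rm Rad}(B)$.

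For the first claim, suppose $V=A\oplus B$ with $A,B\neq 0$. Every nonzero object of finite length has a nonzero socle, because it contains a simple subobject. It also has a nonzero cosocle, because it has a simple quotient. Hence ${\rm Soc}(V)={\rm Soc}(A)\oplus{\rm Soc}(B)$ is a direct sum of two nonzero semisimple objects, so its length is at least $2$ and it is not simple. The same argument applies to ${\rm Cosoc}(V)$. So if either one is simple, $V$ is indecomposable. This is the step I would write most carefully: one has to observe that nonzero finite-length objects have nonzero socle and cosocle. That is immediate here since $\mathscr{Z}$ is a finite category, and the rest is formal.

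For the second claim, let $0\neq V'\subset V$. Then ${\rm Soc}(V')={\rm Soc}(V)\cap V'$ is a nonzero subobject of the simple object ${\rm Soc}(V)$, so ${\rm Soc}(V')={\rm Soc}(V)$ is simple. By the first claim, $V'$ is indecomposable.

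Dually, let $V\twoheadrightarrow V'\neq 0$. Composing with $V'\twoheadrightarrow{\rm Cosoc}(V')$ gives a surjection from $V$ onto a semisimple object, which factors through ${\rm Cosoc}(V)$. So ${\rm Cosoc}(V')$ is a nonzero quotient of the simple object ${\rm Cosoc}(V)$, hence equal to it and simple. The first claim then gives indecomposability. Alternatively, one can apply the socle statement in $\mathscr{Z}^{\rm op}$, or use duality $V\mapsto V^*$, which exchanges socle and cosocle and sub- and quotient objects in the rigid category $\mathscr{Z}$.
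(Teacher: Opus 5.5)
Your proof is correct and follows the paper's one-line justification: socle and cosocle are additive under direct sums, and nonzero finite-length objects have nonzero socle and cosocle. Your arguments for subobjects, via ${\rm Soc}(V')={\rm Soc}(V)\cap V'$, and for quotients, by factoring through ${\rm Cosoc}(V)$, supply details that the paper leaves implicit.
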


\begin{lemma}\label{lem:XV}
$X\ot V_1\cong W_1$ and $X\ot V_2\cong W_2$
\end{lemma}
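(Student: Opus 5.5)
The plan is to show that the isomorphism is essentially tautological once we make explicit which $H$-module structure is meant on each side. The only content is that tensoring in ${\rm Mod}(H)$ with the trivial module $T(X)$ from the proof of Theorem \ref{main result}(2) agrees with the naive operation of tensoring the underlying object with $X$ and letting $H$ act on the $\mathfrak{m}^{\pm}$ factor. So I would first fix conventions. In $X\ot V_1$ (resp. $X\ot V_2$), $X$ means the simple object $T(X)$ of $\mathscr{Z}\simeq{\rm Mod}(H)$ (Theorem \ref{main result}(1),(2)), and $\ot$ is the tensor product of ${\rm Mod}(H)$ induced by the coproduct $\Delta$ of $H$. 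In $W_1:=X\ot\mathfrak{m}^*$ (resp. $W_2$), $H$ acts on the object $X\ot\mathfrak{m}^*$ of $\Ver_5^+$ through the second factor, the $H$-factor being moved past $X$ by the symmetry of $\Ver_5^+$. This is the same recipe by which $P(X)=X\ot H$ was obtained from $P(\1)$ above.

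With these conventions, the action of $H$ on $T(X)\ot M$ (for $M=\mathfrak{m}^*$ or $\mathfrak{m}$) is the composite
\[
H\ot X\ot M\xrightarrow{\Delta\ot\id}H\ot H\ot X\ot M\xrightarrow{\id\ot c_{H,X}\ot\id}H\ot X\ot H\ot M\xrightarrow{\mu_X\ot\mu_M}X\ot M,
\]
where $\mu_X=\varepsilon\ot\id$. Using naturality of the symmetry to slide the first $H$ past $X$, this becomes $(\id_X\ot\mu_M)\circ(c_{H,X}\ot\id_M)$ precomposed with the map $(\varepsilon\ot\id_H)\circ\Delta$ on the $H$-factor. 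The counit axiom $(\varepsilon\ot\id)\Delta=\id_H$ then collapses this to $(\id_X\ot\mu_M)(c_{H,X}\ot\id_M)$, which is exactly the action defining $W_1$ (resp. $W_2$). Hence the identity morphism of $X\ot M$ in $\Ver_5^+$ is an $H$-module isomorphism, and it is compatible with the comodule/central structure because $E$ commutes with the forgetful functors.

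If instead the convention for $W_i$ puts $X$ on the right with $H$ acting on the left factor, I would compose with the braiding ${\rm c}_{T(X),M}$ of $\mathscr{Z}$, which is an isomorphism in $\mathscr{Z}$. By the formula for ${\rm c}$ in Proposition \ref{etsn}, this braiding reduces to the symmetry of $\Ver_5^+$ on a trivial module. Consequently the same counit argument identifies $M\ot T(X)$ with the right-handed version. The only real obstacle is bookkeeping: checking that the symmetry moves in the displayed composite are consistent, i.e.\ that $c_{H\ot H,X}=(c_{H,X}\ot\id)(\id\ot c_{H,X})$. This is immediate from the hexagon axiom in the symmetric category $\Ver_5^+$.
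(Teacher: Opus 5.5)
Your computation is correct. Since $T(X)$ acts through $\varepsilon$, the counit axiom collapses the ${\rm Mod}(H)$-action on $T(X)\ot M$ to $(\id_X\ot\mu_M)(c_{H,X}\ot\id_M)$. So, reading (\ref{firstind5}) literally, the lemma holds on the nose. No sliding or hexagon check is needed: the tensor factor hit by $\varepsilon$ never crosses $X$, and the interchange law suffices.

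However, this is a genuinely different and much thinner argument than the paper's. The paper tensors $0\to\1\to P(\1)\to V_1\to 0$ with $X$ and uses $X\ot P(\1)\cong P(X)$ (Theorem \ref{main result}(3)). Since $P(X)$ is the injective hull of $X$, its socle is $X$, and so $X\ot V_1\cong P(X)/{\rm Soc}(P(X))$. Dualizing, with $X^*\cong X$ and $P(X)^*\cong P(X)$, identifies $X\ot V_2\cong (X\ot V_1)^*$ with the radical of $P(X)$.

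In other words, the paper reads $W_1$ and $W_2$ as the modules $[X\ \1\oplus X]^T$ and $[\1\oplus X\ X]^T$ of Theorem \ref{complete list}. It is this Loewy-structure information that gets used later:
\begin{itemize}
\item $\dim\Hom(S,W_1)=1$ in Lemma \ref{S'ind};
\item $\dim\Hom(W_1,V_1)=1$ in the computation of $V_1\ot V_2$;
\item $\ker(q)=W_2$ in \S\ref{sec:The cohomology ring 5}.
\end{itemize}

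Your argument does buy something: a clean justification that the notation $X\ot\mathfrak{m}$ and $X\ot\mathfrak{m}^*$ in (\ref{firstind5}) is unambiguous, which the paper takes for granted. But it says nothing about the structure of $X\ot V_1$, so on its own it cannot support those downstream uses. For that you still need the exact-sequence step.
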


\begin{proof}
Tensoring the exact sequence $0\to \1\to P(\1)\to V_1\to 0$ with $X$ provides an exact sequence
\[
0\to X\to P(X)\to X\ot V_1\to 0,
\]
so that $X\ot V_1$ is the quotient of $P(X)$ by its socle, which is $W_1$.  By duality $X\ot V_2\cong (X\ot V_1)^\ast\cong W_1^\ast=W_2$.
\end{proof}

Let $S$ be a basis for ${\rm Ext}^1(X,X)$, or more precisely, let $S$ be the middle term of a non-split self-extension
\[
0\to X\to S\to X\to 0,
\]
representing a nonzero element of the one-dimensional space ${\rm Ext}^1(X,X)$. By Lemma \ref{verifies}, $S$ is indecomposable. Consider $S':=X\ot S$.  We calculate
\[
\dim\Hom(\1,S')=\dim\Hom(X,S)=1,
\]
\[
\dim\Hom(X,S')=\dim\Hom(\1\oplus X,S)=1,
\]
so that ${\rm Soc}(S')=\1\oplus X$.  Self-duality gives that ${\rm Cosoc}(S')=\1\oplus X$ as well.

\begin{lemma}\label{S'ind}
The object $S':=X\ot S$ is indecomposable.
\end{lemma}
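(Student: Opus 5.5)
The plan is to argue by contradiction using Krull--Schmidt, the self-duality of the tensor product with $X$, and a bookkeeping of composition factors (``contents''). For an object $M$ of $\mathscr{Z}$ write $c(M)=(n_{\1},n_X)$ for its composition multiplicities. Since $X\ot\1=X$ and $X\ot X=\1\oplus X$, and $X\ot-$ is exact, we have
$$c(X\ot M)=(n_X,\ n_{\1}+n_X).$$
In particular $c(S)=(0,2)$ and $c(S')=(2,2)$. The key identity is
$$X\ot S'=X\ot X\ot S\cong S\oplus S',$$
which holds because $X\ot X\cong \1\oplus X$.

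First I bound the number of summands. Suppose $S'$ is decomposable. Since ${\rm Soc}(S')=\1\oplus X$ is multiplicity free and the socle is additive, $S'=A\oplus B$ with $A,B$ nonzero and indecomposable, and ${\rm Soc}(A)$, ${\rm Soc}(B)$ are $\1$ and $X$ in some order. Hence $c(A)+c(B)=(2,2)$. The case $c(A)=(2,0)$ (or, symmetrically, $c(B)=(2,0)$) is excluded as follows. Such an object has only $\1$ as composition factor, and ${\rm Ext}^1(\1,\1)=0$ by Theorem \ref{cohomology ring}(3), since $d_{11}=0$. So it would be semisimple, namely $\1\oplus\1$, contradicting that the socle of $S'$ contains $\1$ only once. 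So, up to swapping $A$ and $B$, $c(A)\in\{(1,0),(0,1),(1,1)\}$.

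Next I apply $X\ot-$ and Krull--Schmidt. From $X\ot A\oplus X\ot B\cong S\oplus A\oplus B$, where $S$, $A$, $B$ are indecomposable ($S$ by Lemma \ref{verifies}), the object $X\ot A$ must be a direct sum of a sub-multiset of $\{S,A,B\}$. Hence $c(X\ot A)$ must be a sum of a subset of $\{(0,2),\,c(A),\,c(B)\}$. I then check the three remaining cases.
\begin{enumerate}
\item If $c(A)=(1,0)$, then $c(X\ot A)=(0,1)$, while the available contents are $(0,2),(1,0),(1,2)$, so no subset sums to $(0,1)$.
\item If $c(A)=(0,1)$, then $c(X\ot A)=(1,1)$, while the available contents are $(0,2),(0,1),(2,1)$, so again this is impossible.
\item If $c(A)=(1,1)$, then $c(B)=(1,1)$ and $c(X\ot A)=(1,2)$, while the subset sums of $(0,2),(1,1),(1,1)$ are $(0,2),(1,1),(1,3),(2,2),(2,4)$, none of which is $(1,2)$.
\end{enumerate}
Each case gives a contradiction, so $S'$ is indecomposable.

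The step I expect to need the most care is the initial reduction: that at most two summands occur, and that a summand with content $(2,0)$ is impossible. This relies on ${\rm Soc}(S')=\1\oplus X$, which was computed just before the lemma via $\dim\Hom(\1,S')=\dim\Hom(X,S')=1$, together with the vanishing ${\rm Ext}^1(\1,\1)=0$. I must also verify the isomorphism $X\ot X\ot S\cong S\oplus S'$ together with Krull--Schmidt uniqueness, to be sure that $X\ot A$ really is a sum of some of the summands $S,A,B$. This holds because $X\ot A$ is a direct summand of $S\oplus A\oplus B$ and $\mathscr{Z}$ is a Krull--Schmidt category.
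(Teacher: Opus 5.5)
Your argument is correct, and it takes a different route from the paper's.

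The paper also bounds the number of summands by the socle length. It then asserts that any decomposition must be one of two kinds:
\begin{itemize}
\item $S\oplus V'$, with $V'$ a self-extension of $\1$, excluded by ${\rm Ext}^1(\1,\1)=0$;
\item $V_1\oplus V_2$, excluded by comparing $\dim\Hom(S',V_1)=\dim\Hom(S,W_1)=1$ with $\dim\Hom(V_1\oplus V_2,V_1)=2$.
\end{itemize}

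You instead feed the Fibonacci rule back in as $X\ot S'\cong S\oplus S'$ and apply Krull--Schmidt. This reduces everything to bookkeeping of the composition factors of $X\ot A$. Your only homological input is ${\rm Ext}^1(\1,\1)=0$, and it is needed exactly for the split $(2,0)+(0,2)$, which contents alone cannot rule out.

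What your approach gains is completeness. The paper's two-case list does not explicitly address two further splits:
\begin{itemize}
\item $\1\oplus B$ with $B$ of content $(1,2)$;
\item $X\oplus B$ with $B$ of content $(2,1)$, for instance $X\oplus P(\1)$, which has the same socle, cosocle and composition factors as $S'$.
\end{itemize}
Your cases (1) and (2) dispose of both at once. You also never need to identify the content-$(1,1)$ summands as $V_1,V_2$, or to compute $\Hom(S,W_1)$.

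The paper's Hom-dimension argument is shorter and more explicit once its enumeration is granted. To be complete, though, it needs an extra step, such as yours, to handle the omitted splits.
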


\begin{proof}
Since the (co)socle of $S'$ is length $2$, we see that $S'$ decomposes into at most $2$ indecomposables.  Thus, since $[S']=2\1\oplus 2X$ in ${\rm Gr}(\mathscr{Z})$, if $S'$ is decomposable then 
\[
S'=S\oplus V',\ \text{or}\ S'=V_1\oplus V_2,
\]
where $V'$ is a non-split extension of $\1$.  Since by Theorem \ref{cohomology ring}(3), no such thing occurs, the first option cannot occur.  So the only option of decomposability is $S'=V_1\oplus V_2$.  We calculate
\[
\dim\Hom(S',V_1)=\dim\Hom(S,W_1)=1
\]
while $\dim\Hom(V_1\oplus V_2,V_1)=2$.  So the second option is also impossible, and we conclude that $S'$ is indecomposable.
\end{proof}

\begin{theorem}\label{complete list}
The following is a complete list of the indecomposables in $\mathscr{Z}(\Ver_5^+)$:
\begin{equation*}
\begin{array}{c}
{\1},\ \ X,\ \  P(\1)={[\1\ X\ \1]}^T,\ \  P(X)={[X\ \1\oplus X\ X]}^T,\\
V_1={[\1\ X]}^T,\ \  V_2=V_1^*= {[X\ \1]}^T,\ \ W_1={[X\ \1\oplus X]}^T,\ \ W_2=W_1^*={[\1\oplus X\ X]}^T,\\
S={[X\ X]}^T,\quad and \quad S'={[\1\oplus X\ \1\oplus X]}^T.
\end{array}
\end{equation*}
\end{theorem}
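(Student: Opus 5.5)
By Corollary \ref{DB}, $\mathscr{Z}(\Ver_5^+)$ has exactly $10$ indecomposable objects up to isomorphism. So the plan is to show that the ten listed objects are indecomposable, pairwise non-isomorphic, and have the stated Loewy structures. The count then forces the list to be complete.

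\textbf{Structures.} I would use the Loewy structure $P(\1)=[\1\ X\ \1]^T$ and $P(X)=[X\ (\1\oplus X)\ X]^T$ computed at the start of this section.
\begin{itemize}
\item Since $H=P(\1)$ is local with $\mathfrak{m}^2=\1$, the ideal $\mathfrak{m}=V_2$ is $\mathrm{rad}\,P(\1)=[X\ \1]^T$.
\item By duality (using $P(\1)^*\cong P(\1)$, as $\mathscr{Z}$ is Frobenius), $V_1=\mathfrak{m}^*=P(\1)/\mathrm{Soc}=[\1\ X]^T$.
\item By Lemma \ref{lem:XV} and its proof, $W_1=X\ot V_1=P(X)/\mathrm{Soc}=[X\ \1\oplus X]^T$, and dually $W_2=\mathrm{rad}\,P(X)=[\1\oplus X\ X]^T$.
\item $S=[X\ X]^T$ holds by construction.
\item For $S'$, the computation preceding Lemma \ref{S'ind} gives socle and cosocle $\1\oplus X$. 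Its class in ${\rm Gr}(\mathscr{Z})$ is $2\1+2X$, so the radical layers are forced to be $[\1\oplus X\ \1\oplus X]^T$.
\end{itemize}

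\textbf{Indecomposability.} I would check each object in turn.
\begin{itemize}
\item $\1$ and $X$ are simple.
\item $P(\1)$ and $P(X)$ are projective covers of simples, hence indecomposable.
\item $V_1$, $V_2$, $S$ each have simple cosocle or simple socle ($V_2$ and $W_2$ as radicals of local projectives have simple cosocle; $V_1$ and $W_1$ as quotients of projectives with simple socle have simple cosocle). Lemma \ref{verifies} then applies.
\item $W_1$ and $W_2$ are handled the same way.
\item $S'$ is indecomposable by Lemma \ref{S'ind}.
\end{itemize}

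\textbf{Pairwise non-isomorphism.} Comparing composition factors in ${\rm Gr}(\mathscr{Z})$ together with Loewy length separates all pairs except those with equal class. The remaining pairs are:
\begin{itemize}
\item $V_1$ versus $V_2$, both of class $\1+X$. They differ in their socle ($X$ versus $\1$).
\item $W_1$ versus $W_2$, both of class $\1+2X$. They also differ in their socle.
\item $P(X)$ and $S'$ have different classes ($\1+3X$ versus $2\1+2X$), as do $P(\1)$ and $S'$ ($2\1+X$ versus $2\1+2X$).
\end{itemize}
Socles are isomorphism invariants, so this suffices. Ten pairwise non-isomorphic indecomposables, combined with Corollary \ref{DB}, gives completeness.

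\textbf{Main obstacle.} The delicate step is confirming the radical layers of $V_2$ and $W_2$, which requires the identification $\mathrm{rad}\,P(\1)=\mathfrak{m}$. I would verify it directly: $\mathfrak{m}$ is the unique maximal $H$-submodule of $H$ because $H$ is local (Lemma \ref{haH}(2)).

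The other point needing care is the Loewy layers of $S'$. If I am unsure the layers are forced by the class count, I can avoid that claim entirely: $S'$ is the only listed object of class $2\1+2X$, so non-isomorphism with the others needs only its class.
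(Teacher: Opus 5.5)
Your proposal is correct and follows the paper's argument: the ten objects are indecomposable by Lemma~\ref{verifies} and Lemma~\ref{S'ind}, and completeness follows from the count in Corollary~\ref{DB}; you additionally make explicit the pairwise non-isomorphism via classes and socles, which the paper leaves implicit. One slip to fix: $W_2=\mathrm{rad}\,P(X)=[\1\oplus X\ X]^T$ does not have simple cosocle (as your own structure computation shows), so its indecomposability should instead come from the second clause of Lemma~\ref{verifies}, since $W_2$ is a nonzero subobject of $P(X)$, whose socle $X$ is simple.
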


\begin{proof}
By Lemmas \ref{verifies} and \ref{S'ind}, these are indecomposables, and by Corollary \ref{DB}, these are all the indecomposables (up to isomorphism).
\end{proof}

\subsection{The Green ring for $\mathscr{Z}(\Ver_5^+)$}\label{sec:The Green ring}

Retain the notation of \S\ref{sec:The indecomposables}.

\subsubsection{The projectives}\label{sec:The projectives}

\begin{lemma}\label{lem:VP}
If $[V]=n\1\oplus mX$ in ${\rm Gr}(\mathscr{Z})$, then
\[
V\ot P(\1)=nP(\1)\oplus mP(X)\ \ \text{and}\ \ V\ot P(X)=mP(\1)\oplus (m+n)P(X).
\]
\end{lemma}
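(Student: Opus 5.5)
The plan is to use that $P(\1)$ and $P(X)$ are projective, so $V\ot P(\1)$ and $V\ot P(X)$ are projective too: projectives form a tensor ideal in a finite tensor category. By Theorem \ref{main result}(3) the only indecomposable projectives are $P(\1)=H$ and $P(X)=H\ot X$. A projective object is therefore determined up to isomorphism by its head, or equivalently by its class in ${\rm Gr}(\mathscr{Z})$, since the Cartan matrix ${\rm C}=\begin{pmatrix}2&1\\1&3\end{pmatrix}$ is invertible over $\mathbb{Q}$ (its determinant is $5$). Hence it suffices to compute the multiplicities of $P(\1)$ and $P(X)$ in $V\ot P(\1)$ via Hom-dimensions, and these depend only on $[V]$.

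\textbf{Step 1: the case $V\ot P(\1)$.} For the projective object $Q:=V\ot P(\1)$, the multiplicity of $P(L)$ in $Q$ equals $\dim\Hom(Q,L)$ for $L\in\{\1,X\}$. By rigidity,
\[
\Hom(V\ot P(\1),L)\cong\Hom(P(\1),V^*\ot L).
\]
Because $P(\1)$ is the projective cover of $\1$, the functor $\Hom(P(\1),-)$ is exact and counts the multiplicity $[\,\cdot\,:\1]$. We have $[V^*]=n\1+mX$, since the simples are self-dual. Then $[V^*\ot \1:\1]=n$ and $[V^*\ot X:\1]=m$, using $X\ot X=\1\oplus X$ in the Grothendieck ring (exactness of $\ot$ makes the class computation valid). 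This yields $V\ot P(\1)\cong nP(\1)\oplus mP(X)$.

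An alternative that avoids duals is to use Theorem \ref{main result}(3): $P(\1)=I(\1)$ and $V\ot I(\1)\cong I(F(V))$, the standard projection-formula property of the induction functor for the center. In ${\rm Ver}_5^+$ we have $F(V)\cong n\1\oplus mX$, since $\Ver_5^+$ is semisimple and $F$ is exact. Then $I(F(V))=nI(\1)\oplus mI(X)=nP(\1)\oplus mP(X)$. I would mention this as the cleaner route.

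\textbf{Step 2: the case $V\ot P(X)$.} Here $V\ot P(X)=V\ot P(\1)\ot X=nP(X)\oplus mP(\1)\ot X\ot X$. Since $P(\1)\ot(X\ot X)=P(\1)\ot(\1\oplus X)=P(\1)\oplus P(X)$, the total is $mP(\1)\oplus(m+n)P(X)$, as claimed.

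I expect no real obstacle here. The only point requiring care is justifying that tensor products with projectives depend only on Grothendieck classes, either through Hom-counting with exactness of $\Hom(P,-)$ or through the projection formula for $I$, so that non-split extensions in $V$ play no role.
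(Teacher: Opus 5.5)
Your proof is correct, but it is organized differently from the paper's. The paper's entire proof is ``induction on the length of $V$''. Since $-\ot P$ is exact and $V''\ot P$ is projective, any short exact sequence $0\to V'\to V\to V''\to 0$ gives a split sequence $0\to V'\ot P\to V\ot P\to V''\ot P\to 0$. Hence $V\ot P\cong (V'\ot P)\oplus (V''\ot P)$, and everything reduces to the simple cases $X\ot P(\1)=P(X)$ and $X\ot P(X)=P(\1)\ot (X\ot X)=P(\1)\oplus P(X)$.

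You never filter $V$. Instead you identify the projective $V\ot P(\1)$ directly by its head, in one of two ways:
\begin{itemize}
\item by counting $\dim\Hom(V\ot P(\1),L)$ through rigidity and exactness of $\Hom(P(\1),-)$;
\item in one step, via the projection formula $V\ot I(\1)\cong I(F(V))$ combined with $I(L_i)\cong P_i$ from Theorem~\ref{main result}(3).
\end{itemize}
The $P(X)$ case then follows, as in the paper's base case, by tensoring with $X$.

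The induction is the more elementary route: it uses only exactness of $\ot$ and the splitting of sequences with projective cokernel, with no duals or adjoints. Your projection-formula version makes the independence from non-split extensions transparent. It also gives the closed formula $V\ot P_1\cong\bigoplus_i [V:L_i]\,P_i$ for every $p$ in a single line.
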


\begin{proof}
Induction on the length of $V$.
\end{proof}

\subsubsection{Multiplication by $X$}\label{sec:Multiplication by X}

\begin{lemma}
In the Green ring we have
\[
X\ot X=X\oplus\1,\ X\ot V_1=W_1,\ X\ot V_2=W_2, \ X\ot S=S',
\]
\[
X\ot W_1=W_1\oplus V_1,\ X\ot W_2=W_2\oplus V_2,\ \text{and}\ X\ot S'=S'\oplus S.
\]
\end{lemma}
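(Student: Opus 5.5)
The first three identities are already in hand. $X\ot X=\1\oplus X$ is the fusion rule of $\Ver_5^+$, transported to $\mathscr{Z}$ through the tensor embedding of Theorem \ref{main result}(2). $X\ot V_i\cong W_i$ is Lemma \ref{lem:XV}. $X\ot S=S'$ is the definition of $S'$, and $S'$ is indecomposable by Lemma \ref{S'ind}.

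For the remaining three products the plan is to use associativity together with these. We have
\[
X\ot W_1\cong X\ot X\ot V_1\cong(\1\oplus X)\ot V_1=V_1\oplus W_1,
\]
and in the same way $X\ot W_2\cong V_2\oplus W_2$ and
\[
X\ot S'\cong X\ot X\ot S\cong(\1\oplus X)\ot S=S\oplus S'.
\]
Each step uses only that $\ot$ is biexact and associative, that it distributes over $\oplus$, and the isomorphism $X\ot X\cong\1\oplus X$ in $\mathscr{Z}$. That last isomorphism holds because the image of $\Ver_5^+$ in $\mathscr{Z}$ is a semisimple tensor subcategory.

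As a sanity check I would compare classes in ${\rm Gr}(\mathscr{Z})$. For instance $[W_1]=2X+\1$, so $[X\ot W_1]=2(\1+X)+X=2\1+3X$, and this equals $[W_1]+[V_1]$. The same consistency can be checked for the composition series listed in Theorem \ref{complete list}.

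There is no genuine obstacle. The only point needing care is the identification $W_1=X\ot\mathfrak{m}^*$ in \eqref{firstind5}, which is used through Lemma \ref{lem:XV}. With that definition, $X\ot V_1$ is literally $W_1$, so the associativity computation applies directly without invoking the Krull–Schmidt classification.
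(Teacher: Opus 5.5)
Your proposal is correct and follows essentially the same route as the paper. The paper reads off the first line from the fusion rule, Lemma \ref{lem:XV} and the definition $S'=X\ot S$. It obtains the second line from the generic computation $X\ot(X\ot V)\cong(\1\oplus X)\ot V\cong V\oplus(X\ot V)$, applied to $V=V_1,V_2,S$.
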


\begin{proof}
The first centered line of formulas follows from what we have already calculated. As for the second one, 
suppose $V'$ is such that $V'\cong X\ot V$.  Then
\[
X\ot V'\cong (X\ot X)\ot V\cong (X\oplus \1)\ot V=(X\ot V)\oplus V\cong V'\oplus V. 
\]
From this generic calculation, and the expressions $W_i=X\ot V_i$ and $S'=X\ot S$, we obtain the second centered line of prescribed formulas.
\end{proof}

\subsubsection{Multiplication by $V_1$ and $V_2$}\label{sec:Multiplication by V1 and V2}

\begin{lemma}
$V_1\ot V_2=\1\oplus P(X)$.
\end{lemma}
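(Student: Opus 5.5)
We prove the isomorphism $V_1\ot V_2\cong \1\oplus P(X)$ by tensoring the defining short exact sequence of $V_1$ with $V_2$ and then using injectivity of projectives to split off $P(X)$.

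Recall $V_1=\mathfrak{m}^*$ is the quotient of $P(\1)=H$ by its socle $\1$. So we have a short exact sequence $0\to \1\to P(\1)\to V_1\to 0$. Tensoring it with $V_2$, which is exact in each variable, gives
\[
0\to V_2\xrightarrow{\ \iota\ } P(\1)\ot V_2\to V_1\ot V_2\to 0 .
\]
Since $[V_2]=\1+X$ in ${\rm Gr}(\mathscr{Z})$, Lemma \ref{lem:VP} with $n=m=1$ gives $P(\1)\ot V_2\cong P(\1)\oplus P(X)$. Thus $V_1\ot V_2$ is the cokernel of a monomorphism $\iota=(f,g)\colon V_2\to P(\1)\oplus P(X)$.

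Next I analyze the two components of $\iota$. Recall $V_2=\mathfrak{m}=[X\ \1]^T$ has simple socle $\1$, while ${\rm Soc}(P(X))=X$.

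First consider $g\colon V_2\to P(X)$. If $g$ were nonzero on ${\rm Soc}(V_2)=\1$, then $g$ would be injective, because every nonzero subobject of $V_2$ contains its simple socle. It would then embed $\1$ into ${\rm Soc}(P(X))=X$, which is impossible. Hence $g$ kills $\1$.

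Since $\iota$ is injective and $g$ kills ${\rm Soc}(V_2)$, the component $f$ must be nonzero on ${\rm Soc}(V_2)$. By the same socle argument, $f\colon V_2\to P(\1)$ is then a monomorphism.

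By Theorem \ref{main result}(3), $P(X)$ is injective in ${\rm Mod}(H)\simeq\mathscr{Z}$. Therefore $g$ extends along the monomorphism $f$: there is $h\colon P(\1)\to P(X)$ with $g=h\circ f$. The automorphism $(x,y)\mapsto (x,\,y-h(x))$ of $P(\1)\oplus P(X)$ carries $\iota$ to $(f,0)$. Hence
\[
V_1\ot V_2\;\cong\; P(\1)/f(V_2)\,\oplus\, P(X).
\]

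It remains to identify $P(\1)/f(V_2)$. Since $P(\1)=[\1\ X\ \1]^T$ and $[V_2]=\1+X$, the quotient has class $\1$, so it is isomorphic to the simple object $\1$. As a consistency check, $[V_1\ot V_2]=(\1+X)^2=2\cdot\1+3X=[\1]+[P(X)]$, which agrees with the Cartan data.

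The only delicate point is the socle argument that $f$ is a monomorphism. It relies on ${\rm Soc}(V_2)$ being simple and equal to $\1$, which follows from $V_2=\mathfrak{m}=V_1^*$ being the dual of $V_1=P(\1)/\1$. An alternative route is to split $\1$ off $V_1\ot V_1^*$ using $\dim V_1=1+3=4\neq 0$ in $\mathbf{k}$. One would then still need to identify the complement with $P(X)$, which is exactly what the argument above accomplishes directly.
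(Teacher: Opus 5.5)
Your argument is correct, but it takes a different route from the paper.

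\textbf{The paper's route.} The paper first computes ${\rm Soc}(V_1\ot V_2)=\1\oplus X$ from $\Hom$-dimensions and gets the cosocle by self-duality. Using length $5$ and the complete list of indecomposables (Theorem \ref{complete list}, which rests on the Brauer tree description and Janusz's count in Corollary \ref{DB}), it narrows the possibilities to $P(\1)\oplus S$ or $P(X)\oplus \1$. It then excludes the first option because $\dim V_1=4\neq 0$ in characteristic $5$, which forces $\1$ to split off.

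\textbf{Your route.} You realize $V_1\ot V_2$ as the cokernel of $V_2\hookrightarrow P(\1)\ot V_2\cong P(\1)\oplus P(X)$. The socle argument forces the $P(\1)$-component to be a monomorphism and kills the socle in the $P(X)$-component. Injectivity of $P(X)$ then lets you shear the $P(X)$-component away. In effect you prove the Heller-shift identity
$\Omega^{-1}(\1)\ot\Omega(\1)\cong \Omega^{-1}\Omega(\1)\oplus(\text{projective})=\1\oplus P(X)$, with $V_1=\Omega^{-1}(\1)$ and $V_2=\Omega(\1)$.

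\textbf{Comparison.} Your approach uses only Lemma \ref{lem:VP}, the Cartan matrix, the simple socles of $V_2$ and $P(X)$, and the fact that projectives are injective. It therefore needs neither the finite-type classification nor the characteristic-dependent fact $\dim V_1\neq 0$. The same mechanism computes $V_1\ot M$ as $\Omega^{-1}(M)$ plus projectives for any $M$. One small point: Lemma \ref{lem:VP} is stated for $V\ot P(\1)$, so you need the braiding to pass to $P(\1)\ot V_2$, which is harmless. The paper's argument, in turn, follows the uniform template it uses for all the Green ring products: socle and cosocle counts matched against the list of $10$ indecomposables.
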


\begin{proof}
Since $\dim\Hom(\1,V_1\ot V_2)=1$ and 
\[
\dim\Hom(X,V_1\ot V_2)=\dim\Hom(W_1,V_1)=1,
\]
the socle of $V_1\ot V_2$ is $\1\oplus X$.  Since this object is self-dual, the cosocle is $\1\oplus X$ as well. Since  $[V_1\ot V_2]=2\1\oplus 3X$ in ${\rm Gr}(\mathscr{Z})$, the product is length $5$.  These length and socle constraints demand that $V_1\ot V_2$ has exactly two indecomposable summands
\[
V_1\ot V_2=V\oplus V'
\]
with both the socle and cosocle of $V$ and $V'$ of length $1$.  It follows that either $V=P(\1)$ and $V'=S$, or $V=P(X)$ and $V'=\1$.
But since $\dim(V_1),\dim(V_2)\ne 0$, $\1$ appears as a summand in $V_1\ot V_2$, so we get the result.
\end{proof}

\begin{lemma}
$V_1\ot V_1=V_2\ot V_2=P(\1)\oplus S$.
\end{lemma}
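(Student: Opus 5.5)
We will prove $V_1\ot V_1\cong P(\1)\oplus S$ by the same socle-counting pattern as in the proof of $V_1\ot V_2=\1\oplus P(X)$, and then obtain $V_2\ot V_2$ by duality. Since $V_2=V_1^*$ and duality is a contravariant monoidal equivalence, we have $V_2\ot V_2\cong (V_1\ot V_1)^*$. Moreover $P(\1)^*\cong P(\1)$ (it is self-dual, being the projective cover and injective hull of $\1$ by Theorem \ref{main result}(3)), and $S^*$ is again a non-split self-extension of $X$, hence $S^*\cong S$. So it suffices to treat $V_1\ot V_1$.

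The first step is to compute the composition factors. In ${\rm Gr}(\mathscr{Z})$ we have $[V_1]=\1+X$, so $[V_1\ot V_1]=\1+2X+X^2=2\cdot\1+3X$, which has length $5$.

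The second step is to compute the socle. By adjunction, $\Hom(\1,V_1\ot V_1)\cong\Hom(V_1^*,V_1)=\Hom(V_2,V_1)$. The head of $V_2=[X\ \1]^T$ is $X$ and the socle of $V_1=[\1\ X]^T$ is $\1$, so any morphism $V_2\to V_1$ must have image in the socle $\1$ of $V_1$ while coming from the head $X$ of $V_2$. This forces the morphism to be zero, and $\Hom(\1,V_1\ot V_1)=0$. Similarly, $\Hom(X,V_1\ot V_1)\cong\Hom(X\ot V_2,V_1)=\Hom(W_2,V_1)$ by Lemma \ref{lem:XV}. Here $W_2=[\1\oplus X\ X]^T$ has head $\1\oplus X$ and socle $X$. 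A map $W_2\to V_1$ can either kill the radical of $W_2$, giving a map $\1\oplus X\to V_1$ whose image lands in the socle $\1$, which is one-dimensional; or it can be surjective onto $V_1$, which would require $V_1$ to be a quotient of $W_2$. I expect this dimension count to be the main obstacle. The plan is to count using the projective cover instead, writing $\Hom(W_2,V_1)$ via $\Hom(P(\1),-)$ and $\Hom(P(X),-)$ multiplicities, or to use the alternative adjunction $\Hom(X,V_1\ot V_1)=\Hom(X\ot V_2, V_1)$ together with exactness. The aim is to show that the total is $2$, so that ${\rm Soc}(V_1\ot V_1)=2X$. Self-duality of the pattern then gives that the cosocle has the same shape; concretely, $\Hom(V_1\ot V_1,Y)\cong \Hom(V_1,Y\ot V_2)$, and we compute this in the same way.

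The third step is to decompose. With socle $2X$ and length $5$, $V_1\ot V_1$ has at most two indecomposable summands, each with simple socle. We compare with the list in Theorem \ref{complete list}. The summands with socle $X$ are $X$, $P(X)$, $V_2$, $W_1$ and $S$, and their classes must add up to $2\cdot\1+3X$. However, $P(\1)$ has socle $\1$, so if $P(\1)$ is a summand the socle computation above must be corrected. We therefore redo it carefully: $\Hom(\1,V_1\ot V_1)=\Hom(V_2,V_1)$, and $V_2$ has head $\1$ (note $V_2=[X\ \1]^T$ lists the socle $X$ at the top under the paper's convention), so a map $V_2\to\1\subset V_1$ exists and this Hom is $1$-dimensional. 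We will fix the convention at the outset, writing $[\text{socle}\ \dots\ \text{head}]$, consistent with $V_1=\mathfrak{m}^*$. With this convention the socle comes out as $\1\oplus X$. The only combination of listed indecomposables with simple socles and classes summing to $2\cdot\1+3X$ is $P(\1)\oplus S$; the alternatives $P(X)\oplus\1$ and $W_i\oplus V_j$ would be excluded by the cosocle count or by the class.

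Finally, to rule out $P(X)\oplus\1$ we use that $\dim V_1=\dim\mathfrak{m}=1+\dim X$ with $\dim X=3\equiv -2$ in characteristic $5$, so $\dim V_1=-1$ and $\dim(V_1\ot V_1)=1\neq0$. This alone does not rule out a $\1$ summand. Instead we argue via Hom: a summand $\1$ would require a split map $\1\to V_1\ot V_1\to\1$, i.e. that $\mathrm{id}_{V_1}$ factors through $V_1\cong V_1^{**}$ in a way forcing $V_1\cong V_2$, which is false since their socles differ. Hence $V_1\ot V_1=P(\1)\oplus S$.
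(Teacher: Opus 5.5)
Your overall plan (composition factors, socle and cosocle by adjunction, then matching against the list in Theorem \ref{complete list}) is the paper's. However, the proposal never establishes the facts that make the matching work. You flag $\dim\Hom(X,V_1\ot V_1)=\dim\Hom(W_2,V_1)$ as the main obstacle and then only assert that the socle ``comes out as'' $\1\oplus X$. You take the cosocle from ``self-duality of the pattern'', but $(V_1\ot V_1)^*\cong V_2\ot V_2$, and knowing that this is isomorphic to $V_1\ot V_1$ is part of what the lemma asserts, so that step is circular.

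The cosocle is not a formality. The object $V_2\oplus W_2$ has two summands with simple socles $\1$ and $X$, class $2\cdot\1+3X$, and no summand $\1$. So neither the class count nor your final step excludes it. Only the bound $\dim\Hom(V_1\ot V_1,X)=\dim\Hom(V_1,W_2)\le 1$ (against $2$ for $V_2\oplus W_2$) does. You also need to explain why $V_1\ot V_1$ is decomposable at all: the indecomposables with socle $\1\oplus X$ are $W_1$ and $S'$, of lengths $3$ and $4$, not $5$.

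Your ``corrected'' convention is backwards. $V_2=\mathfrak m$ has socle $\mathfrak m^2=\1$ and head $\mathfrak m/\mathfrak m^2=X$, so $V_1=\mathfrak m^*\cong P(\1)/\1$ has head $\1$ and socle $X$; the paper's diagrams put the head on top. The dimensions happen to come out the same, because flipping amounts to dualizing everything, but your justifications describe the wrong modules. With the correct structure the missing counts are short:
\begin{itemize}
\item a nonzero map $V_2\to V_1$ is not an isomorphism, so it factors through $\mathrm{head}(V_2)=X=\mathrm{soc}(V_1)$;
\item a map $W_2\to V_1$ that does not kill the simple essential socle $\mathrm{soc}(W_2)=X$ would be injective, which is impossible by length; so it factors through $W_2/X=\1\oplus X$ and lands in $\mathrm{soc}(V_1)=X$;
\item $\Hom(V_1,W_2)$ embeds in $\Hom(X,W_2)=\mathbf{k}$, because $\Hom(\1,W_2)=0$.
\end{itemize}

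Your way of excluding $P(X)\oplus\1$ differs from the paper's and is sound in spirit. As written, though (``$\mathrm{id}_{V_1}$ factors through $V_1\cong V_1^{**}$''), it does not parse. The rigorous version runs as follows. By adjunction and the pivotal structure, any $\iota\colon\1\to V_1\ot V_1$ and $\pi\colon V_1\ot V_1\to\1$ come from some $f\colon V_2\to V_1$ and $g\colon V_1\to V_2$, and $\pi\circ\iota$ is the categorical trace of $g\circ f\in\End(V_2)$. Here $f$ factors through $X$ and $g$ through $\1$, so $g\circ f$ factors through a morphism $X\to\1$ and is $0$. Hence $\pi\circ\iota=0$ and $\1$ is not a summand. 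In general one needs that the endomorphism ring of an indecomposable is local and that nilpotent endomorphisms have trace zero; this is the Benson--Carlson argument, and it is what makes your ``forcing $V_1\cong V_2$'' correct.

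The paper instead uses the preceding lemma $V_1\ot V_2\cong\1\oplus P(X)$ to get $\dim\Hom(V_1,V_1\ot V_1)=\dim\Hom(\1\oplus P(X),V_1)=1$, whereas $\dim\Hom(V_1,\1\oplus P(X))=2$. Your route is self-contained and does not need that lemma; the paper's is a pure Hom count. Either finishes the proof, but only once the socle and cosocle have actually been computed.
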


\begin{proof}
We calculate
\[
\dim\Hom(\1,V_1\ot V_1)=\dim\Hom(V_2,V_1)=1,
\]
\[\ \dim\Hom(X,V_1\ot V_1)=\dim\Hom(W_2,V_1)=1,
\]
\[
\dim\Hom(V_1\ot V_1,\1)=1,\,\, and\,\, \dim\Hom(V_1\ot V_1,X)=\dim\Hom(V_1,W_2)=1.
\]
Therefore the socle and cosocle of $V_1\ot V_1$ are both $\1\oplus X$.  Also $[V_1\ot V_1]=2\1\oplus 3X$ in ${\rm Gr}(\mathscr{Z})$, so $V_1\ot V_1$ has at most $2$ indecomposables in its decomposition, and by considering the length of the indecomposables with socle $\1\oplus X$, we see that $V_1\ot V_1$ is decomposable.  Hence
\[
V_1\ot V_1=V\oplus V'
\]
with $V$ having socle $\1$ and $V'$ having socle $X$, and both the socles and cosocles of $V$ and $V'$ must be length $1$.  Since $W_1$, $W_2$ and $S'$ have too large socle or cosocle to appear in this decomposition, and the fact that $V_1\ot V_1$ is length $5$, it follows that $V=P(\1)$ and $V'=S$ or $V=P(X)$ and $V'=\1$.
But we have
\[
\dim\Hom(V_1,V_1\ot V_1)=\dim\Hom(\1\oplus P(X),V_1)=1
\]
while $\dim\Hom(V_1,\1\oplus P(X))=2$, so that the second option is eliminated.  Thus $V_1\ot V_1=P(\1)\oplus S$.  Since $P(\1)\oplus S$ is self-dual, we find that 
$$V_2\ot V_2=(V_1\ot V_1)^\ast=P(\1)\oplus S$$
as well.
\end{proof}

\begin{lemma}
$V_1\ot W_1=V_2\ot W_2=P(X)\oplus S'$, and 
$$V_1\ot W_2=V_2\ot W_1=P(\1)\oplus P(X)\oplus X.$$
\end{lemma}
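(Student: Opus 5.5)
The plan is to reduce everything to the previous two lemmas by associativity, using $W_i=X\ot V_i$ (Lemma \ref{lem:XV}) and the formulas for multiplication by $X$. First,
\[
V_1\ot W_1\cong V_1\ot X\ot V_1\cong X\ot (V_1\ot V_1)=X\ot (P(\1)\oplus S),
\]
where braidedness gives $V_1\ot X\cong X\ot V_1$. Next, Lemma \ref{lem:VP} applied to $V=X$ (so $n=0$, $m=1$) gives $X\ot P(\1)=P(X)$, and the Green ring lemma gives $X\ot S=S'$. Hence $V_1\ot W_1=P(X)\oplus S'$.

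The same argument with $V_2$ in place of $V_1$ uses $V_2\ot V_2=P(\1)\oplus S$ and yields $V_2\ot W_2=P(X)\oplus S'$. Alternatively, this follows by taking duals, since $P(X)$ and $S'$ are self-dual ($S'=X\ot S$, and $X$ and $S$ are self-dual). The one point to check is that $S$ is self-dual. This holds because $S^*$ is again a nonsplit self-extension of $X$, and ${\rm Ext}^1(X,X)$ is one-dimensional by Theorem \ref{cohomology ring}(3) with $d_{33}=1$; so $S^*\cong S$.

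For the second formula,
\[
V_1\ot W_2\cong X\ot (V_1\ot V_2)=X\ot(\1\oplus P(X))=X\oplus X\ot P(X).
\]
Lemma \ref{lem:VP} with $n=0$, $m=1$ gives $X\ot P(X)=P(\1)\oplus P(X)$, so $V_1\ot W_2=P(\1)\oplus P(X)\oplus X$. Similarly, $V_2\ot W_1\cong X\ot(V_2\ot V_1)$, and $V_2\ot V_1\cong V_1\ot V_2$ by the braiding, so the same answer follows.

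There is no real obstacle here. The only points needing care are the use of the braiding to commute $X$ past $V_i$, and the self-duality of $S$ if one argues via duals.
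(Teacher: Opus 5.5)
Your proposal is correct and matches the paper's proof. The paper likewise deduces all four products from $V_i\ot W_j\cong X\ot(V_i\ot V_j)$, combined with the two preceding lemmas and the multiplication-by-$X$ rules ($X\ot S=S'$, and $X\ot P(\1)=P(X)$, $X\ot P(X)=P(\1)\oplus P(X)$ from Lemma \ref{lem:VP}). Your optional duality argument via self-duality of $S$ is also valid but not needed.
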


\begin{proof}
This follows from the previous two lemmas and the fact that 
$$V_i\ot W_j=X\ot(V_i\ot V_j)$$
for every $1\le i,j\le 2$.
\end{proof}

\begin{lemma}
$V_1\ot S=P(X)\oplus V_2$, and $V_2\ot S=P(X)\oplus V_1$.
\end{lemma}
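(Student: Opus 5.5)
We prove $V_1\ot S\cong P(X)\oplus V_2$ first and obtain the second identity by duality. Since $S$ is self-dual ($S^*$ is again a non-split self-extension of $X$, and ${\rm Ext}^1(X,X)$ is one-dimensional), and $V_1^*=V_2$, we get
\[
V_2\ot S\cong (V_1\ot S)^*\cong P(X)^*\oplus V_2^*=P(X)\oplus V_1,
\]
using that $P(X)$ is self-dual as the projective cover and injective envelope of the self-dual simple $X$.

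\textbf{Composition factors.} Since $[V_1]=\1+X$ and $[S]=2X$, we have
\[
[V_1\ot S]=2X+2(\1+X)=2\1+4X.
\]
This is exactly $[P(X)]+[V_2]=(\1+3X)+(\1+X)$, so the claimed answer is consistent at the level of ${\rm Gr}(\mathscr{Z})$.

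\textbf{Socle and cosocle.} By adjunction,
\[
\dim\Hom(\1,V_1\ot S)=\dim\Hom(V_2,S).
\]
This is $0$: the head of $V_2$ is $X$ and its socle is $\1$, but $S$ has no composition factor $\1$, so any map $V_2\to S$ must kill the socle $\1$ and hence factor through a map from a quotient of $V_2$. Checking more carefully, $\Hom(V_2,S)$ consists of maps $V_2\to X={\rm Soc}(S)$ through the head, so the dimension is $1$. Similarly,
\[
\dim\Hom(X,V_1\ot S)=\dim\Hom(X\ot V_2,S)=\dim\Hom(W_2,S).
\]
The head of $W_2=[\1\oplus X\ X]^T$ is $\1\oplus X$, which maps onto ${\rm Soc}(S)=X$. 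A map onto all of $S$ would require the quotient $W_2/{\rm rad}$-structure to contain $S$, which we would rule out by a length and Loewy argument, giving dimension $1$. So ${\rm Soc}(V_1\ot S)=\1\oplus X$. Dually, using the self-duality of $S$ and $(V_1\ot S)^*=V_2\ot S$, the cosocle is computed in the same way.

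\textbf{Reading off the decomposition.} With socle of length $2$, the object $V_1\ot S$ has at most two indecomposable summands. It cannot be indecomposable, since no object in the list of Theorem~\ref{complete list} has length $6$. So
\[
V_1\ot S=V\oplus V'
\]
with ${\rm Soc}(V)=\1$ and ${\rm Soc}(V')=X$, and $[V]+[V']=2\1+4X$. From the list of Theorem~\ref{complete list}, the indecomposables with simple socle $\1$ are $\1$, $V_2$, and $P(\1)$, and those with socle $X$ are $X$, $V_1$, $S$, $W_1$, and $P(X)$. The only pairs with the right class are $(V_2,P(X))$ and $(P(\1),W_1)$; note that $[P(\1)]+[W_1]=2\1+4X$ as well.

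\textbf{Main obstacle.} The remaining step is to rule out $P(\1)\oplus W_1$. For this I would use projectivity. Since $S$ is not projective and $V_1\ot S$ contains a projective summand $P$, we have $\Hom(P,V_1\ot S)$ large. More concretely, I would compute
\[
\dim\Hom(V_1\ot S,\1)=\dim\Hom(S,V_2)=\dim\Hom(S,V_1^*).
\]
The head of $S$ is $X$ and the socle of $V_2$ is $\1$, so every map $S\to V_2$ lands in $\1$ or factors through $X\subset$, and one checks this dimension is $0$. However, $P(\1)\oplus W_1$ has $\1$ in its cosocle, so it would give dimension at least $1$. Hence the decomposition must be $P(X)\oplus V_2$, whose cosocle is $X\oplus X$. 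This cosocle computation is the step I expect needs the most care.
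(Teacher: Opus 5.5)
Your route is valid and differs from the paper's, but several slips in the write-up need fixing.

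\textbf{Corrections.}

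First, $\dim\Hom(V_2,S)=1$, because every nonzero map factors as $V_2\twoheadrightarrow X\hookrightarrow S$. Drop the initial ``this is $0$''.

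Second, the claim $\dim\Hom(W_2,S)=1$ is easy to justify. A surjection $W_2\twoheadrightarrow S$ would have kernel of class $[\1]$, so $\1$ would be a subobject of $W_2$. That is impossible, since ${\rm Soc}(W_2)=X$.

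Third, and more substantively, your enumeration misreads Theorem~\ref{complete list}. $W_1=[X\ \ \1\oplus X]^T$ has socle $\1\oplus X$, so the indecomposables with socle $X$ are $X,V_1,S,W_2,P(X)$. Also, $[P(\1)]+[W_1]=3\1+3X$, not $2\1+4X$ (and the same holds for $[P(\1)]+[W_2]$).

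With the correct list, the class count alone finishes the argument. The summand with socle $\1$ is $\1$, $V_2$ or $P(\1)$. These would need partners of class $\1+4X$, $\1+3X$ and $3X$ respectively, and only the second exists, namely $P(X)$. So the ``main obstacle'' does not exist. Your computation $\Hom(V_1\otimes S,\1)\cong\Hom(S,V_2)=0$ is correct but unnecessary.

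\textbf{Comparison with the paper.} The paper argues purely formally. It evaluates the triple product $V_1\otimes V_2\otimes V_2$ (labelled $V_1^{\otimes 3}$ in the text) in two ways. One way gives $(\1\oplus P(X))\otimes V_2=P(\1)\oplus 2P(X)\oplus V_2$, using the preceding lemma and Lemma~\ref{lem:VP}. The other gives $V_1\otimes(P(\1)\oplus S)=P(\1)\oplus P(X)\oplus(V_1\otimes S)$. It then cancels by Krull--Schmidt.

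The paper's route needs no new Hom computations, but it depends on the already established decompositions of $V_1\otimes V_2$ and $V_2\otimes V_2$. Your socle-counting argument is self-contained. It uses only the classification and Loewy structures in Theorem~\ref{complete list} plus two adjunction computations, which is the technique the paper itself uses for $V_1\otimes V_2$ and $V_1\otimes V_1$. Both arguments finish with the same duality step for $V_2\otimes S$.
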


\begin{proof}
We have
\[
V_1^{\ot 3}=V_1\ot V_2\ot V_2=(\1\oplus P(X))\ot V_2=P(\1)\oplus 2P(X)\oplus V_2,
\]
and also
\[
V_1^{\ot 3}=V_1\ot (P(\1)\oplus S)=P(\1)\oplus P(X)\oplus V_1\ot S.
\]
Thus, $V_1\ot S=P(X)\oplus V_2$, and taking the dual gives $V_2\ot S=P(X)\oplus V_1$.
\end{proof}

\begin{corollary}
$V_1\ot S'=P(\1)\oplus P(X)\oplus W_2$ and $V_2\ot S'=P(\1)\oplus P(X)\oplus W_1$.
\end{corollary}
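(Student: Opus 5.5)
We reduce to products already computed, using $S'=X\ot S$ from Lemma \ref{S'ind} (equivalently, the formula $X\ot S=S'$ from the lemma on multiplication by $X$). Since $\mathscr{Z}$ is braided, the tensor product is commutative up to isomorphism. Hence
\[
V_1\ot S'\cong V_1\ot X\ot S\cong X\ot (V_1\ot S),
\]
and likewise $V_2\ot S'\cong X\ot (V_2\ot S)$.

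Next we substitute the previous lemma, $V_1\ot S=P(X)\oplus V_2$, which gives
\[
V_1\ot S'\cong (X\ot P(X))\oplus (X\ot V_2).
\]
The second summand is $W_2$ by Lemma \ref{lem:XV}. For the first, we apply Lemma \ref{lem:VP} with $V=X$, so $n=0$ and $m=1$. This yields $X\ot P(X)=P(\1)\oplus P(X)$. Therefore $V_1\ot S'=P(\1)\oplus P(X)\oplus W_2$.

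For the second identity we could repeat the computation with $V_2\ot S=P(X)\oplus V_1$ and $X\ot V_1=W_1$. Alternatively, we dualize. Here $S'$ is self-dual, since $X$ is self-dual and $S^*\cong S$ (the unique non-split self-extension of $X$, as $\mathrm{Ext}^1(X,X)$ is one-dimensional). Also $P(\1)$ and $P(X)$ are self-dual, and $V_1^*=V_2$, $W_2^*=W_1$. So duality gives $V_2\ot S'=P(\1)\oplus P(X)\oplus W_1$.

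There is no real obstacle. The only point needing care is the use of the braiding to commute $V_1$ past $X$, together with the values read off from Lemma \ref{lem:VP}. We take the direct computation as the primary route, so that the self-duality of $S$ is not needed.
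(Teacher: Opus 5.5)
Your proposal is correct and is the paper's argument. The paper's one-line proof ("considering $S'=X\ot S$ and applying the braiding") is exactly your reduction $V_i\ot S'\cong X\ot(V_i\ot S)$, followed by the values $V_1\ot S=P(X)\oplus V_2$ and $V_2\ot S=P(X)\oplus V_1$, Lemma \ref{lem:VP} (giving $X\ot P(X)=P(\1)\oplus P(X)$) and Lemma \ref{lem:XV}.
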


\begin{proof}
Considering $S'=X\ot S$ and applying the braiding gives the result.
\end{proof}

\subsubsection{Multiplication by $S$}\label{sec:Multiplication by S}

\begin{lemma}
$S\ot S=\1\oplus P(\1)\oplus P(X)$.
\end{lemma}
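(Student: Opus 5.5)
We have to prove $S\ot S=\1\oplus P(\1)\oplus P(X)$. The plan is to imitate the trick used for $V_1\ot S$: compute a triple product in two different ways and cancel, relying on Krull--Schmidt in $\mathscr{Z}$. The natural product to use is $V_1\ot V_1\ot S$.

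First, by the lemma $V_1\ot V_1=P(\1)\oplus S$,
\[
V_1\ot V_1\ot S=(P(\1)\oplus S)\ot S=(P(\1)\ot S)\oplus (S\ot S).
\]
Lemma \ref{lem:VP} applied with $[S]=2X$ (so $n=0$, $m=2$) gives $P(\1)\ot S=2P(X)$.

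Second, using $V_1\ot S=P(X)\oplus V_2$ and then $V_1\ot V_2=\1\oplus P(X)$, we get
\[
V_1\ot(V_1\ot S)=(V_1\ot P(X))\oplus (V_1\ot V_2)=(V_1\ot P(X))\oplus \1\oplus P(X).
\]
Since $[V_1]=\1+X$, Lemma \ref{lem:VP} gives $V_1\ot P(X)=P(\1)\oplus 2P(X)$. Hence
\[
V_1\ot V_1\ot S=\1\oplus P(\1)\oplus 3P(X).
\]

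Comparing the two expressions,
\[
2P(X)\oplus (S\ot S)\cong \1\oplus P(\1)\oplus 3P(X).
\]
Krull--Schmidt (the category is finite abelian, so Hom-spaces are finite dimensional) lets us cancel $2P(X)$, yielding $S\ot S=\1\oplus P(\1)\oplus P(X)$.

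As a sanity check, $[S\ot S]=4[X\ot X]=4\,\1+4X$ in ${\rm Gr}(\mathscr{Z})$, while $[\1\oplus P(\1)\oplus P(X)]=(1+2+1)\1+(0+1+3)X=4\,\1+4X$, which agrees.

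The only step needing care is the legitimacy of the rebracketing $(V_1\ot V_1)\ot S\cong V_1\ot(V_1\ot S)$, which is just associativity, and the cancellation, which is Krull--Schmidt. Neither is a real obstacle; the main thing is to apply Lemma \ref{lem:VP} correctly, with the tensor factor on the appropriate side. The braiding makes $V\ot P\cong P\ot V$, so the side does not matter.
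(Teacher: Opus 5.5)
Your proof is correct, but it takes a different route from the paper's.

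The paper argues through socles. It computes $\dim\Hom(\mathbf{1},S\otimes S)=2$ and $\dim\Hom(X,S\otimes S)=1$, so the socle of $S\otimes S$ is $2\cdot\mathbf{1}\oplus X$, and by self-duality so is the cosocle. Combined with $[S\otimes S]=4\cdot\mathbf{1}+4X$ and the list of ten indecomposables, this forces three summands, each with simple socle and cosocle. That leaves only $\mathbf{1}\oplus P(\mathbf{1})\oplus P(X)$ or $P(\mathbf{1})\oplus P(\mathbf{1})\oplus S$. The second is excluded because $\dim(S)\neq 0$ in $\mathbf{k}$ forces $\mathbf{1}$ to split off from $S\otimes S^*=S\otimes S$.

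You instead do pure Green-ring bookkeeping. You evaluate $V_1\otimes V_1\otimes S$ in two ways and cancel $2P(X)$ by Krull--Schmidt, which is the same device the paper uses for $V_1\otimes S$. All your inputs are established earlier in the paper: $V_1\otimes V_1=P(\mathbf{1})\oplus S$, $V_1\otimes S=P(X)\oplus V_2$, $V_1\otimes V_2=\mathbf{1}\oplus P(X)$, and Lemma \ref{lem:VP}, with the braiding used to move $P(\mathbf{1})$ to the other side. Your arithmetic checks out.

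Your route needs no Hom-space computation, no case analysis over the indecomposables, and no appeal to $\dim(S)\neq 0$. The price is that it depends on the whole chain of earlier product lemmas, which ultimately rest on socle arguments of the paper's kind. By contrast, the paper's proof of this particular lemma uses only the classification of indecomposables and two Hom dimensions.
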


\begin{proof}
Since
$
\dim\Hom(\1,S\ot S)=2$ and $\dim\Hom(X,S\ot S)=1$,
the socle of $S\ot S$ is $2\1\oplus X$.  Self-duality provides the cosocle as $2\1\oplus X$ as well.  Then $[S\ot S]=4\1\oplus 4X$ in ${\rm Gr}(\mathscr{Z})$.  So we have a decomposition into three indecomposables, necessarily of total length 8.  The socle and cosocle of each indecomposable in this decomposition must be simple, so that our options are
\[
\1\oplus P(\1)\oplus P(X)\quad or \quad P(\1)\oplus P(\1)\oplus S.
\]
Since $\dim(S)\ne 0$, $\1$ must appear as a summand in the indecomposable decomposition of $S\ot S$, which implies the claim.
\end{proof}

\subsubsection{A complete presentation}\label{sec:prod}

We omit the multiplication with the projectives, as they are clear from Lemma \ref{lem:VP}. The following hold:
\[
X\ot-\left\{\begin{array}{l}
X= \1\oplus X\\
V_1= W_1\\
V_2= W_2\\
W_1= W_1\oplus V_1\\
W_2= W_2\oplus V_2\\
S= S'\\
S'= S'\oplus S
\end{array}\right.
\]
\[
\begin{array}{ll}
V_1\ot-\left\{\begin{array}{l}
V_1= P(\1)\oplus S\\
V_2= \1\oplus P(X)\\
W_1= P(X)\oplus S'\\
W_2= X\oplus P(\1)\oplus P(X)\\
S= P(X)\oplus V_2\\
S'= P(\1)\oplus P(X)\oplus W_2\\
\end{array}\right.
& \qquad
V_2\ot-\left\{\begin{array}{l}
V_2= P(\1)\oplus S\\
W_1= X\oplus P(\1)\oplus P(X)\\
W_2= P(X)\oplus S'\\
S= P(X)\oplus V_1\\
S'= P(\1)\oplus P(X)\oplus W_1
\end{array}\right.
\end{array}
\]
\[
\begin{array}{ll}
W_1\ot-\left\{\begin{array}{l}
W_1= P(\1)\oplus P(X)\oplus S'\oplus S\\
W_2= \1\oplus X\oplus P(\1)\oplus 2P(X)\\
S= P(\1)\oplus P(X)\oplus W_2\\
S'= P(\1)\oplus 2P(X)\oplus W_2\oplus V_2
\end{array}\right.
&
W_2\ot-\left\{\begin{array}{l}
W_2= P(\1)\oplus P(X)\oplus S'\oplus S\\
S= P(\1)\oplus P(X)\oplus W_1\\
S'= P(\1)\oplus 2P(X)\oplus W_1\oplus V_1
\end{array}\right.
\end{array}
\]
\[
S\ot S= \1\oplus P(\1)\oplus P(X),\,\,
S\ot S'= X\oplus P(\1)\oplus 2P(X),\quad and
\]
\[
S'\ot S'= \1 \oplus X\oplus 2P(\1)\oplus 3P(X).
\]

\subsection{The cohomology ring}\label{sec:The cohomology ring 5}
Let $\mathscr{Z}:=\mathscr{Z}(\Ver_5^+)$, set $d:={\rm FPdim}(X)$, and let $P^{\bullet}$ be as in (\ref{minprojres}).

Let us use the above results to see directly that 
there is an isomorphism of graded commutative algebras 
$$H^{\bullet}(\mathscr{Z})\cong \mathbf{k}[x,y]/\langle y^2\rangle,$$
where $x$ has degree $4$ and $y$ has degree $3$, as given by Theorem \ref{cohomology ring}.

We have
$$\bullet \qquad \ker(\partial^{0})=V_2,\quad P^1=P(V_2)=P(X),\quad and\quad \partial^1:P(X)\twoheadrightarrow V_2$$ 
is a lifting of $q:P(X)\twoheadrightarrow X$ along $V_2\twoheadrightarrow X$. Thus, $\ker (\partial^1)\subseteq \ker (q)=W_2$. Since ${\rm FPdim}(V_2)=1+d$, ${\rm FPdim}(W_2)=1+2d$, and  
$${\rm FPdim}(X)+{\rm FPdim}(W_2)={\rm FPdim}(P(X))={\rm FPdim}(V_2)+{\rm FPdim}(\ker (\partial^1)),$$
we get that ${\rm FPdim}(\ker (\partial^1))=2d$. Hence, by Theorem \ref{complete list}, $\ker (\partial^1)=S$, so
$$\bullet \qquad \ker (\partial^1)=S,\quad
P^2=P(S)=P(X),\quad and\quad 
\partial^2:P(X)\twoheadrightarrow S$$
is a lifting of $q:P(X)\twoheadrightarrow X$ along $S\twoheadrightarrow X$. Thus, $\ker (\partial^2)\subseteq \ker (q)=W_2$. Since
$$1+3d={\rm FPdim}(P(X))={\rm FPdim}(S)+{\rm FPdim}(\ker (\partial^2))=2d+{\rm FPdim}(\ker (\partial^2)),$$
we get that ${\rm FPdim}(\ker (\partial^2))=1+d$. Hence, by Theorem \ref{complete list}, $\ker (\partial^2)=V_1$, so
$$\bullet \qquad \ker (\partial^2)=V_1,\quad
P^3=P(V_1)=P(\1),\quad and\quad 
\partial^3:P(\1)\twoheadrightarrow V_1$$
is a lifting of $\varepsilon:P(\1)\twoheadrightarrow \1$ along $V_1\twoheadrightarrow \1$, so $\ker (\partial^3)\subseteq \ker (\varepsilon)=V_2$. Since 
$$2+d={\rm FPdim}(P(\1))={\rm FPdim}(V_1)+{\rm FPdim}(\ker (\partial^3))=1+d+{\rm FPdim}(\ker (\partial^3)),$$
we get that ${\rm FPdim}(\ker (\partial^3))=1$.
Thus, we have
$$\bullet \qquad \ker (\partial^3)=\1,\quad
P^4=P^0=P(\1),\quad and\quad \partial^4=\partial^0=\varepsilon:P(\1)\twoheadrightarrow \1.$$
Therefore, we see that the resolution $P^{\bullet}$ is periodic with period $4$, $\Hom(P^1,\1)=0$, $\Hom(P^2,\1)=0$, $\Hom(P^3,\1)=\mathbf{k}$ and $\Hom(P^4,\1)=\mathbf{k}$, as claimed in Theorem \ref{cohomology ring}.

Consider next the graded algebra ${\rm Ext}^{\bullet}(X,X)$. We have for each $n\ge 0$, 
\[
{\rm Ext}^{n}(X,X)=\Hom(P^n\ot X,X)=\mathbf{k}.
\]
We know that ${\rm Ext}^{1}(X,X)={\rm sp}\{S\}$. 

\begin{question}\label{extxx}
What is the algebra structure of ${\rm Ext}^{\bullet}(X,X)$? In particular, is the Yoneda algebra ${\rm Ext}^{\bullet}(X,X)$ generated in degree $1$? \qed 
\end{question}

\subsection{The semisimplification of $\mathscr{Z}(\Ver_5^+)$}\label{sec:The semisimplification 5}
Let $\overline{\mathscr{Z}}:=\overline{\mathscr{Z}(\Ver_5^+)}$. Since 
$$\dim(\1)=1\quad and \quad \dim(X)=3,$$
it follows from Theorem \ref{complete list} that $P(\1)$ and $P(X)$ are the only indecomposables of dimension $0$, so the braided fusion category $\overline{\mathscr{Z}}$ has exactly $8$ simple objects
$$\1,\,X,\,V_1,\,V_2=V_1^*,\,W_1,\,W_2=W_1^*,\,S=S^*,\,S'=(S')^*,$$
with categorical dimensions
$$\dim(V_1)=\dim(V_2)=-1,\,\dim(S)=1,\,\dim(W_1)=\dim(W_2)=2,\,\dim(S')=3,$$
and by \S\ref{sec:prod}, they satisfy the following fusion rules:
\[
\begin{array}{ll}
X\ot-\left\{\begin{array}{l}
X= \1\oplus X\\
V_1= W_1\\
V_2= W_2\\
W_1= W_1\oplus V_1\\
W_2= W_2\oplus V_2\\
S= S'\\
S'= S'\oplus S
\end{array}\right.
& \qquad
V_1\ot-\left\{\begin{array}{l}
V_1= S\\
V_2= \1\\
W_1= S'\\
W_2=X\\
S= V_2\\
S'= W_2\\
\end{array}\right.
\end{array}
\]
\[
\begin{array}{ll}
V_2\ot-\left\{\begin{array}{l}
V_2= S\\
W_1= X\\
W_2=S'\\
S= V_1\\
S'= W_1
\end{array}\right.
& \qquad \quad
W_1\ot-\left\{\begin{array}{l}
W_1=S'\oplus S\\
W_2= \1\oplus X\\
S= W_2\\
S'=W_2\oplus V_2
\end{array}\right.
\end{array}
\]
\[
W_2\ot W_2=S'\oplus S,\,\, W_2\ot S= W_1,\,\, W_2\ot S'= W_1\oplus V_1,
\]
\[
S\ot S=\1,\,\, S\ot S'=X,\,\, and \,\, 
S'\ot S'=\1\oplus X.
\]
 
In particular, the invertible objects of $\overline{\mathscr{\mathscr{Z}}}$ are $\1,S,V_1,V_2$, and they form a pointed fusion subcategory $\mathscr{E}:=\langle \1,S,V_1,V_2\rangle \subseteq \overline{\mathscr{Z}}$. Also, $\Ver_5^+=\langle \1,X\rangle \subseteq \overline{\mathscr{Z}}$ is a symmetric fusion subcategory.

\begin{theorem}\label{semisimp}
The following hold:
\begin{enumerate}
\item
$\mathscr{E}$ and $\Ver_5^+$ centralize each other.
\item
The braided pointed fusion subcategory $\mathscr{E}\subseteq \overline{\mathscr{Z}}$ is equivalent to $\Rep(\mathbb{Z}/4\mathbb{Z},z)$ as symmetric categories, where $z$ is the unique element of order two.
\item
$\overline{\mathscr{Z}}$ is symmetric, and 
$\overline{\mathscr{Z}}\simeq \mathscr{E} \boxtimes \Ver_5^+$ as symmetric fusion categories.  
\end{enumerate}
\end{theorem}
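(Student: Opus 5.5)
The plan is to use unipotence of the braiding (Theorem \ref{main result}(4)) together with the fusion rules of $\overline{\mathscr{Z}}$ listed above. The semisimplification functor $\mathscr{Z}\to\overline{\mathscr{Z}}$ is a braided tensor functor. So for indecomposables $V,W$, the double braiding ${\rm c}^2_{V,W}$ in $\overline{\mathscr{Z}}$ is the image of the corresponding morphism in $\mathscr{Z}$. By Theorem \ref{main result}(4), ${\rm c}^2_{V,W}$ is unipotent in $\mathscr{Z}$, so its image is again unipotent.

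Now take $g\in\{\1,S,V_1,V_2\}$ invertible in $\overline{\mathscr{Z}}$ and $W$ any simple. Then $g\ot W$ is simple, so $\End_{\overline{\mathscr{Z}}}(g\ot W)=\mathbf{k}$. A unipotent scalar equals $1$, hence ${\rm c}^2_{g,W}=\id$. This shows at once that $\mathscr{E}$ centralizes all of $\overline{\mathscr{Z}}$. In particular $\mathscr{E}$ centralizes itself and centralizes $\Ver_5^+$, which gives (1). (For non-invertible pairs this argument fails, since unipotents in a matrix algebra need not be trivial; that is why we route everything through $\mathscr{E}$.)

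For (3), the fusion rules in \S\ref{sec:The semisimplification 5} give
$$W_1=X\ot V_1,\quad W_2=X\ot V_2,\quad S'=X\ot S.$$
Hence every simple object has the form $Y\ot g$ with $Y\in\{\1,X\}$ and $g\in\mathscr{E}$. The double braiding is multiplicative in each argument by the hexagon axioms. Since it is trivial on $\Ver_5^+\times\Ver_5^+$ (symmetry of $\Ver_5^+$) and on all pairs involving $\mathscr{E}$, it is trivial everywhere, so $\overline{\mathscr{Z}}$ is symmetric.

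Because $\mathscr{E}$ and $\Ver_5^+$ centralize each other, the tensor product gives a braided tensor functor $\mathscr{E}\boxtimes\Ver_5^+\to\overline{\mathscr{Z}}$, and this functor is surjective. Moreover $\mathscr{E}\cap\Ver_5^+={\rm Vect}$, since the only invertible object of $\Ver_5^+$ is $\1$. Comparing Frobenius--Perron dimensions, with ${\rm FPdim}(X)={\rm FPdim}(W_i)={\rm FPdim}(S')=\phi$ (the golden ratio), gives
$${\rm FPdim}(\overline{\mathscr{Z}})=4(1+\phi^2)={\rm FPdim}(\mathscr{E}){\rm FPdim}(\Ver_5^+).$$
This shows the functor is an equivalence, e.g.\ by \cite[Proposition 6.3.4]{EGNO}-type arguments for surjective functors of equal dimension.

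For (2), the fusion rules $V_1\ot V_1=S$, $S\ot S=\1$ and $V_1\ot V_2=\1$ show that the group of invertibles is $\mathbb{Z}/4\mathbb{Z}$, generated by $V_1$, with $S$ the element of order two. A symmetric pointed fusion category in characteristic $\ne 2$ is determined by its group together with the parity homomorphism $g\mapsto {\rm c}_{g,g}\in\{\pm1\}$. Moreover, with the pivotal structure inherited from $\mathscr{Z}$, one has $\dim(g)={\rm c}_{g,g}$ for invertible $g$ in a symmetric category whose twist is trivial. Since $\dim V_1=-1$ and $\dim S=1$, the generator is odd and $S$ is even. This identifies $\mathscr{E}$ with $\Rep(\mathbb{Z}/4\mathbb{Z},z)$, the super-Tannakian category whose grading element $z$ is the element of order two.

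The main obstacle is this last identification ${\rm c}_{g,g}=\dim g$. It requires checking that the twist on $\overline{\mathscr{Z}}$ induced from the canonical ribbon structure of $\mathscr{Z}(\Ver_5^+)$ is trivial on $\mathscr{E}$. I would first try to deduce this from unipotence: the twist is a unipotent automorphism of a simple object, hence trivial. It then remains to rule out a nontrivial associator class. For $\mathbb{Z}/4\mathbb{Z}$ with a symmetric braiding this is forced, since the braiding determines the quadratic form and the abelian 3-cocycle.
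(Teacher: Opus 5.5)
Your proof is correct, but it is organized differently from the paper's.

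\textbf{Part (1).} The paper uses commutator theory. Since $\mathscr{E}_{\rm ad}={\rm Vec}$, it invokes \cite[Proposition 8.22.6]{EGNO} to get $(\mathscr{E}')^{\rm co}=\overline{\mathscr{Z}}$. Hence $X\ot X^*=\1\oplus X\in\mathscr{E}'$, and so $X\in\mathscr{E}'$. The ``unipotent scalar equals $1$'' argument appears in the paper only in (2), and only for pairs $A,B\in\mathscr{E}$. You apply that same argument to pairs $(g,W)$ with $W$ an arbitrary simple. This gives at once (1), the symmetry of $\mathscr{E}$, and the stronger fact that $\mathscr{E}$ lies in the M\"uger center of $\overline{\mathscr{Z}}$, without any commutator machinery.

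\textbf{Part (3).} The paper obtains $\overline{\mathscr{Z}}\simeq\mathscr{E}\boxtimes\Ver_5^+$ only as fusion categories, via exact factorizations \cite[Corollary 3.9]{G1}, and then upgrades using (1) and (2). You build the braided functor from mutual centralization and conclude with the FPdim count. This gives the symmetric equivalence directly and uses only standard facts.

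\textbf{Part (2), the parity.} The paper simply appeals to $\dim(V_1)=-1$. You correctly note that this pins down ${\rm c}_{V_1,V_1}$ only once the twist is known to be trivial on $\mathscr{E}$. Indeed, the Tannakian $\Rep(\mathbb{Z}/4\mathbb{Z})$, with its spherical structure twisted by the character of its group of invertibles sending a generator to $-1$, also has a generator of dimension $-1$.

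\textbf{The one unjustified step.} You assert that the twist $\theta$ of $\mathscr{Z}$ is unipotent without proof. This is easy to supply:
\begin{itemize}
\item $\theta$ is a natural automorphism of the identity functor.
\item It is trivial on the simples $\1$ and $X$. The balancing axiom on $X\ot X\cong\1\oplus X$, where ${\rm c}^2_{X,X}=\id$, forces $\theta_X^2=1$ and $\theta_X^2=\theta_X$.
\item By naturality it preserves every composition series and acts trivially on its layers, so it is unipotent.
\end{itemize}
Hence its image on each simple of $\overline{\mathscr{Z}}$ is $1$, which is exactly what you need.
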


\begin{proof}
(1) Since $\mathscr{E}_{{\rm ad}}={\rm Vec}$, $(\mathscr{E}_{{\rm ad}})'=\overline{\mathscr{Z}}$, and since by \cite[Proposition 8.22.6]{EGNO}, $(\mathscr{E}_{{\rm ad}})'=(\mathscr{E}')^{{\rm co}}$, it follows that $(\mathscr{E}')^{{\rm co}}=\overline{\mathscr{Z}}$. In particular, $X\in (\mathscr{E}')^{{\rm co}}$; that is, $X\ot X^*=\1\oplus X\in \mathscr{E}'$, so $X\in \mathscr{E}'$. Thus, $\mathscr{E}$ and $\Ver_5^+$ centralize each other.

(2) By the results in \cite[Section 8.4]{EGNO}, it suffices to show that $\mathscr{E}$ is symmetric. To this end, let $A,B$ be two simples in $\mathscr{E}$. Since $A\ot B$ is simple, $\bar{{\rm c}}^2_{A,B}=\lambda\id_{A\ot B}$ for some $\lambda\in \mathbf{k}^{\times}$. Since by Theorem \ref{mainthm}, $\id - {\rm c}^2_{A,B}$ is nilpotent, it follows that so is $\id - \bar{{\rm c}}^2_{A,B}$, which implies that $\lambda=1$.

(3) The fusion rules of $\overline{\mathscr{Z}}$ imply that $\mathscr{E}$ and $\Ver_5^+$ determine an exact factorization of $\overline{\mathscr{Z}}$ in the sense of \cite{G1}. Since $\overline{\mathscr{Z}}$ is braided, \cite[Corollary 3.9]{G1} implies that $\overline{\mathscr{Z}}\simeq \mathscr{E} \boxtimes \Ver_5^+$ as fusion categories. Thus, the claim follows from (1) and (2) and the fact that $\dim(V_1)=-1$.
\end{proof}

\subsection{Minimal extensions}\label{sec:Minimal extensions 5}
Let $\mathscr{D}$ be a finite {non-degenerate} braided tensor category over $\mathbf{k}$ that contains $\Ver_5^+$ as a Lagrangian subcategory, so that by \cite{S}, we have ${\rm FPdim}(\mathscr{D})={\rm FPdim}(\Ver_5^+)^2$.
It is not hard to verify that $\Ver_5^+$ must be the semisimple part of $\mathscr{D}$, and consequently that $\mathscr{D}$ and $\mathscr{Z}(\Ver_5^+)$ have the same Grothendieck and Green rings. 

\begin{question}\label{minext5}
Does the above imply that $\mathscr{D}\simeq \mathscr{Z}(\Ver_5^+)$ as braided tensor categories? In other words, is it true that both the associativity and braiding of $\mathscr{Z}(\Ver_5^+)$ cannot be non-trivially deformed? \qed
\end{question}

\section{Appendix by ChatGPT under guidance from Pavel Etingof}\label{sec:Appendix}

Throughout, $\mathbf{k}$ is an algebraically closed field of characteristic $7$.  Label the
three vertices of the quiver by $0,1,2$, corresponding respectively to
$\mathbf 1,X,Y$.  I use the usual path-algebra convention that in a product
$pq$ the path $q$ is traversed first.  Thus
\[
 b_1:0\longrightarrow 1,\qquad a_1:1\longrightarrow 0,
 \qquad b_2:1\longrightarrow 2,\qquad a_2:2\longrightarrow 1,
\]
and $d_1,d_2$ are loops at $1,2$, respectively.

The algebra in Example \ref{quiver7} is
\begin{equation}\label{eq:original}
 \mathbf{k} Q\Big/\left\langle
 a_1d_1,\ d_1a_2-a_2d_2,\ d_1b_1,\ d_2b_2-b_2d_1,
 \ d_1^2+a_2b_2+3b_1a_1,\ d_2^2+3b_2a_2
 \right\rangle .
\end{equation}

\subsection{Parameters modulo renormalization of the arrows}
After dividing each relation by one of its nonzero coefficients, the most general
coefficient deformation in a formal neighborhood of \eqref{eq:original} is
\begin{equation}\label{eq:general}
 A(\alpha,\beta,\gamma,\delta,\varepsilon)
 :=\mathbf{k} Q\Big/\left\langle
 \begin{aligned}
 &a_1d_1,\qquad d_1a_2-\alpha a_2d_2,\qquad d_1b_1,\qquad
 d_2b_2-\beta b_2d_1,\\
 &d_1^2+\gamma a_2b_2+\delta b_1a_1,
 \qquad d_2^2+\varepsilon b_2a_2
 \end{aligned}
 \right\rangle ,
\end{equation}
where all five parameters are nonzero.

Renormalize the arrows by
\[
 d_1'=s d_1,\quad d_2'=t d_2,\quad
 a_2'=u a_2,\quad b_2'=v b_2,\quad
 a_1'=x a_1,\quad b_1'=y b_1.
\]
Then the parameters transform as
\begin{equation}\label{eq:scaling}
 \alpha'=\alpha\frac{s}{t},\qquad
 \beta'=\beta\frac{t}{s},\qquad
 \gamma'=\gamma\frac{s^2}{uv},\qquad
 \delta'=\delta\frac{s^2}{xy},\qquad
 \varepsilon'=\varepsilon\frac{t^2}{uv}.
\end{equation}
Consequently the two quantities
\begin{equation}\label{eq:invariants}
 h:=\alpha\beta,
 \qquad
 \lambda:=\frac{\alpha^2\varepsilon}{\gamma}
\end{equation}
are invariant under renormalization.  Conversely, they are a complete set of
invariants for the action of arrow rescalings: taking $t=\alpha s$,
$uv=\gamma s^2$, and $xy=\delta s^2$ reduces \eqref{eq:general} to
\begin{equation}\label{eq:canonical}
 A_{h,\lambda}:=\mathbf{k} Q\Big/\left\langle
 \begin{aligned}
 &a_1d_1,\qquad d_1a_2-a_2d_2,\qquad d_1b_1,\qquad
 d_2b_2-hb_2d_1,\\
 &d_1^2+a_2b_2+b_1a_1,
 \qquad d_2^2+\lambda b_2a_2
 \end{aligned}
 \right\rangle .
\end{equation}
The original algebra has
\[
 (h,\lambda)=(1,3).
\]
Indeed, the coefficient $3$ of $b_1a_1$ in \eqref{eq:original} is removed by
renormalizing, for example, $a_1' =3a_1$.

\subsection{The flat locus}

\begin{proposition}\label{prop:classification}
Assume that $h,\lambda\in\mathbf{k}^\times$.
\begin{enumerate}
 \item If $h=1$ and $\lambda\ne1$, then $A_{1,\lambda}$ has dimension $26$,
 is concentrated in degrees $0,\ldots,4$, and has Hilbert series
 \[
  3+6z+8z^2+6z^3+3z^4.
 \]
 Its Cartan matrix, with the vertices ordered as $0,1,2$, is
 \[
  \begin{pmatrix}
   3&2&1\\
   2&6&3\\
   1&3&5
  \end{pmatrix}.
 \]
 Moreover, these algebras form a finite free family over
 \[
  R:=\mathbf{k}[\lambda,\lambda^{-1},(\lambda-1)^{-1}].
 \]

 \item If $h\ne1$, then $A_{h,\lambda}$ has dimension $22$ and Cartan matrix
 \[
  \begin{pmatrix}
   3&2&1\\
   2&4&3\\
   1&3&3
  \end{pmatrix}.
 \]
 Thus varying $h$ away from $1$ is not a flat deformation of the original
 algebra.

 \item If $h=1$ and $\lambda=1$, then $A_{1,1}$ is infinite-dimensional.
\end{enumerate}
\end{proposition}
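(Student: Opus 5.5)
The plan is a direct computation of $A_{h,\lambda}$ vertex-by-vertex: compute $e_jA_{h,\lambda}e_i$ degree by degree as a quotient of paths. All relations are homogeneous quadratic, so $A_{h,\lambda}=\bigoplus_n A_n$ is graded by path length. I will compute each graded piece $e_jA_ne_i$ as paths modulo the degree-$n$ part of the ideal.

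In degree $2$ there are $12$ paths of length two and $6$ independent relations, so $\dim A_2=6$ is wrong. I must count more carefully: the quadratic paths are
\begin{itemize}
\item at vertex $0$: $a_1b_1$;
\item $1\to1$: $b_1a_1$, $a_2b_2$, $d_1^2$;
\item $0\to1$: $d_1b_1$;
\item $1\to0$: $a_1d_1$;
\item $1\to2$: $b_2d_1$, $d_2b_2$;
\item $2\to1$: $d_1a_2$, $a_2d_2$;
\item $2\to2$: $b_2a_2$, $d_2^2$;
\item $0\to2$: $b_2b_1$;
\item $2\to0$: $a_1a_2$.
\end{itemize}
That gives $14$ paths, and $14-6=8$, matching the claimed $8z^2$.

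For degree $3$ and beyond, the cleanest route is a noncommutative Gröbner basis computation. Fix a deglex order in which the leading terms of the relations are $a_1d_1$, $d_1a_2$, $d_1b_1$, $d_2b_2$, $d_1^2$, and $d_2^2$. Then run Buchberger/Bergman overlap resolution on all overlaps: $d_1^2\cdot d_1$, $d_1\cdot d_1a_2$, $d_1\cdot d_1b_1$, $a_1d_1\cdot d_1$, $d_2b_2\cdot d_1$, $d_2^2\cdot b_2$, and so on. The resolutions produce the new degree-$3$ and degree-$4$ relations, with coefficients polynomial in $h$ and $\lambda$.

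The key structural point is to track how $h$ enters. Combining $d_1a_2=a_2d_2$ with $d_2b_2=hb_2d_1$ gives
\[
d_1a_2b_2=a_2d_2b_2=h\,a_2b_2d_1 .
\]
Since $d_1$ commutes with $d_1^2=-a_2b_2-b_1a_1$ up to the relations $a_1d_1=d_1b_1=0$, we also get $d_1a_2b_2=a_2b_2d_1$ modulo the ideal. Hence $(h-1)a_2b_2d_1=0$. When $h\neq1$ this kills extra degree-$3$ elements at vertex $1$ and, propagating, at vertex $2$. That should account for the drop from $26$ to $22$, namely $C_{11}:6\to4$ and $C_{22}:5\to3$.

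Similarly, $d_2^2b_2$ computed two ways gives a relation involving $\lambda$ and the element $b_2a_2b_2$. When $\lambda=1$ the resulting coefficient $(\lambda-1)$ vanishes, so a cycle such as $(b_2d_1)^n$ or $(d_2)^n$ is never killed, and $A_{1,1}$ is infinite-dimensional. To prove part (3), I would exhibit an explicit infinite family of nonzero monomials, normal words avoiding all leading terms, once one shows no new relation is forced. Alternatively, I would construct an infinite-dimensional module, or a surjection onto $\mathbf{k}[t]$ sending a suitable cycle to $t$.

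For part (1), I then compute normal words in each degree for generic $\lambda\neq0,1$. This yields $3+6z+8z^2+6z^3+3z^4$, and reading off the vertex components gives the stated Cartan matrix. Its total is $26$, agreeing with Theorem \ref{mainthm} at $h=1$, $\lambda=3$.

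Freeness over $R$ follows because the Gröbner basis has leading coefficients that are units in $R$, namely powers of $\lambda$ and $\lambda-1$. The normal words then form a common $R$-basis, giving a free $R$-module of rank $26$. The main obstacle is making the overlap computation complete and correct: I must ensure that every overlap has been resolved, especially in degrees $4$ and $5$, where I need to show $A_5=0$. In practice I would organize this by the central-like element $d=d_1+d_2$, using that $d^2$ acts on each $e_iAe_i$ and shifts the socle. I would also check the answer against the $h=1$, $\lambda=3$ case, whose Cartan matrix is known independently from Theorem \ref{mainthm}.
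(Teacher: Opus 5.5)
Your method for (1)--(2) is the paper's: a Diamond lemma / Gröbner basis computation. Your derivation of $(h-1)a_2b_2d_1=0$, by comparing $d_1\cdot d_1^2$ with $d_1^2\cdot d_1$, is exactly the paper's ambiguity at $svu$/$vus$. The vertex-$2$ analogue is $d_2(b_2a_2)=(d_2b_2)a_2$, which gives $(h-1)d_2^3=0$.

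But the proposal stops there. The reduction system is never completed, so several things are asserted rather than proved:
the Hilbert series, the dimensions $26$ and $22$, the vanishing of $A_5$, and the claim that all leading coefficients are units of $R$. Your freeness argument rests on that last claim, and it depends on the order you chose.

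You also misplace where $\lambda-1$ enters. The degree-$3$ relation $(1-\lambda)b_2a_2b_2+b_2b_1a_1=0$ coming from $d_2^2b_2$ changes no dimension. The $1\to 2$ part of $A_3$ is one-dimensional for every $\lambda$, and at $h=1$ all of $A_{\le 3}$ is independent of $\lambda$. The factor $\lambda-1$ first matters in degree $4$, through $(\lambda-1)b_2d_1^3=0$ and $(\lambda-1)a_2d_2^3=0$. The paper's order makes $b_1a_1$ and $b_2a_2$ the leading terms of the last two relations, so that $d_1^n,d_2^n$ stay normal. This keeps the completion short: besides the six quadratic rules one only needs $a_2b_2d_1\mapsto -d_1^3$, $a_1a_2d_2\mapsto 0$, the two $(\lambda-1)$-rules, and $d_1^5,d_2^5\mapsto 0$.

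Part (3) has a genuine gap. That one coefficient vanishes at $\lambda=1$ in one overlap says nothing by itself about $\dim A_{1,1}$. Your other suggestions also fail:
\begin{itemize}
\item $(b_2d_1)^n$ is not a cycle: $b_2d_1$ runs from vertex $1$ to vertex $2$, so $(b_2d_1)^2=0$ already in $\mathbf{k}Q$.
\item A surjection onto $\mathbf{k}[t]$ cannot exist. In any commutative quotient the $e_i$ are central orthogonal idempotents, every arrow between distinct vertices dies, and the last two relations become $d_1^2=0=d_2^2$. So the abelianization of $A_{h,\lambda}$ is the $5$-dimensional algebra $\mathbf{k}\times\mathbf{k}[d_1]/(d_1^2)\times\mathbf{k}[d_2]/(d_2^2)$.
\item An infinite-dimensional module proves nothing unless it is finitely generated.
\end{itemize}

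What is missing is a noncommutative witness supported on vertices $1$ and $2$. The paper uses the following representation in $M_2(\mathbf{k}[z])$:
\[
e_1\mapsto E_{11},\ e_2\mapsto E_{22},\ d_1\mapsto zE_{11},\ d_2\mapsto zE_{22},\ b_2\mapsto zE_{21},\ a_2\mapsto -zE_{12},\ e_0,a_1,b_1\mapsto 0 .
\]
Under it, $d_2b_2-hb_2d_1\mapsto (1-h)z^2E_{21}$ and $d_2^2+\lambda b_2a_2\mapsto (1-\lambda)z^2E_{22}$, while the other four relations map to $0$. So this is a representation exactly when $h=\lambda=1$. Its image contains the images $z^nE_{11}$ of $d_1^n$ for all $n$, hence is infinite-dimensional.
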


\begin{proof}
We first treat $h=1$.  For the Diamond-lemma computation only, write a path as a
word in the order in which its arrows are traversed, and put
\[
 u=b_1,\qquad v=a_1,\qquad x=b_2,\qquad y=a_2,
 \qquad s=d_1,\qquad t=d_2.
\]
Use the degree-lexicographic order induced by
\[
 v>y>x>s>t>u.
\]
Over $R$, the six quadratic relations in \eqref{eq:canonical} have the following
oriented form:
\begin{equation}\label{eq:quadratic-rules}
 us\mapsto0,\quad sv\mapsto0,\quad
 ys\mapsto ty,\quad xt\mapsto sx,
 \quad vu\mapsto-ss-xy,\quad
 yx\mapsto-\lambda^{-1}tt.
\end{equation}
Completing the reduction system adds
\begin{equation}\label{eq:higher-rules}
 sxy\mapsto-sss,\qquad tyv\mapsto0,
 \qquad sssx\mapsto0,\qquad ttty\mapsto0,
 \qquad s^5\mapsto0,\qquad t^5\mapsto0.
\end{equation}
For clarity, the nontrivial ambiguity calculations producing these additional
rules are, up to multiplication by units,
\[
\begin{array}{c|c}
 \text{ambiguous word}&\text{new consequence}\\ \hline
 svu\quad(\text{equivalently }vus)&sxy+sss=0\\
 ysv&tyv=0\\
 sxyx&(\lambda-1)sssx=0\\
 tyvu&(\lambda-1)ttty=0\\
 sssxy&s^5=0\\
 tttyx&t^5=0.
\end{array}
\]
Here one uses that both $\lambda$ and $\lambda-1$ are units.  A direct check of
the remaining overlaps shows that they all resolve.  Hence the Diamond lemma
implies that the standard words form an $R$-basis.

Returning to the usual path notation, this basis is
\begin{align*}
\mathcal B={}&
 e_0,e_1,e_2;\\
&b_1,a_1,d_1,b_2,a_2,d_2;\\
&a_1b_1,\ b_2b_1,\ a_2b_2,\ d_1^2,\ b_2d_1,
 \ a_1a_2,\ a_2d_2,\ d_2^2;\\
&a_2b_2b_1,\ a_1a_2b_2,\ d_1^3,\ b_2d_1^2,
 \ a_2d_2^2,\ d_2^3;\\
&a_1a_2b_2b_1,\ d_1^4,\ d_2^4.
\end{align*}
Thus the family is free of rank $3+6+8+6+3=26$ over $R$.  Grouping the basis
paths by source and target gives
\[
\begin{array}{c|c|c|c}
 \text{source}\backslash\text{target}&0&1&2\\ \hline
0& e_0,\ a_1b_1,\ a_1a_2b_2b_1
 &b_1,\ a_2b_2b_1
 &b_2b_1\\[2pt]
1&a_1,\ a_1a_2b_2
 &e_1,\ d_1,\ a_2b_2,\ d_1^2,\ d_1^3,\ d_1^4
 &b_2,\ b_2d_1,\ b_2d_1^2\\[2pt]
2&a_1a_2
 &a_2,\ a_2d_2,\ a_2d_2^2
 &e_2,\ d_2,\ d_2^2,\ d_2^3,\ d_2^4.
\end{array}
\]
This proves the first assertion and the stated Cartan matrix.

Now suppose $h\ne1$.  In traversal notation, the quadratic rule
$xt\mapsto sx$ is replaced by
\[
 xt\mapsto h sx.
\]
The two ambiguity calculations at the words $svu$ and $vus$ give, respectively,
\[
 sss+sxy=0,
 \qquad
 sss+h sxy=0.
\]
Since $h-1$ is invertible, this forces
\[
 sxy=sss=0.
\]
Likewise, the ambiguity at $yxt$ gives $(h-1)ttt=0$, hence $ttt=0$, while $ysv$
gives $tyv=0$.  The resulting confluent reduction system has as standard words
exactly
\[
 \mathcal B\setminus\{d_1^3,d_2^3,d_1^4,d_2^4\}.
\]
This is a basis of cardinality $22$, and grouping it by endpoints gives the
second Cartan matrix.

Finally, take $h=\lambda=1$.  There is a quotient representation in
$M_2(\mathbf{k}[z])$ given by
\[
 e_0\mapsto0,\quad e_1\mapsto E_{11},\quad e_2\mapsto E_{22},
\]
\[
 d_1\mapsto zE_{11},\qquad d_2\mapsto zE_{22},\qquad
 b_2\mapsto zE_{21},\qquad a_2\mapsto-zE_{12},
 \qquad a_1,b_1\mapsto0.
\]
All defining relations are satisfied, and the image contains $z^nE_{11}$ for
all $n\ge0$.  Hence $A_{1,1}$ is infinite-dimensional.
\end{proof}

\begin{corollary}\label{cor:answer}
In a formal neighborhood of the algebra in Example 3.12, the locus of flat
coefficient deformations is
\[
 h=\alpha\beta=1.
\]
Modulo renormalization of the generators, it is one-dimensional, with parameter
\[
 \lambda=\frac{\alpha^2\varepsilon}{\gamma},
 \qquad \lambda(0)=3.
\]
Equivalently, a representative of the universal one-parameter coefficient
deformation is
\begin{equation}\label{eq:formal-family}
 \mathbf{k}[[\tau]]Q\Big/\left\langle
 \begin{aligned}
 &a_1d_1,\quad d_1a_2-a_2d_2,\quad d_1b_1,
 \quad d_2b_2-b_2d_1,\\
 &d_1^2+a_2b_2+3b_1a_1,
 \quad d_2^2+(3+\tau)b_2a_2
 \end{aligned}
 \right\rangle .
\end{equation}
It is free of rank $26$ over $\mathbf{k}[[\tau]]$ and has the same Cartan matrix as the
original algebra.  The parameter $\tau$ cannot be removed by rescaling the arrows.
\end{corollary}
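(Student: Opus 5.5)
The plan is to read the corollary off Proposition \ref{prop:classification}, after first reducing the general five-parameter family \eqref{eq:general} to the two-parameter normal form \eqref{eq:canonical}.

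\emph{Reduction to the normal form.} In a formal neighborhood of the original algebra all parameters are units. The rescaling formulas \eqref{eq:scaling} show that $h=\alpha\beta$ and $\lambda=\alpha^2\varepsilon/\gamma$ are invariant, and that every $A(\alpha,\beta,\gamma,\delta,\varepsilon)$ is isomorphic to $A_{h,\lambda}$. This already shows that $\tau$ cannot be removed by rescaling in \eqref{eq:formal-family}: there $\lambda=(3+\tau)/3\cdot 3=3+\tau$, up to the normalization $a_1'=3a_1$, which does not affect $\lambda$. So $\lambda$ moves off its value $3$ at $\tau=0$ while all rescalings preserve it.

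\emph{Identifying the flat locus.} A coefficient deformation is flat exactly when the fibre dimension is constant and equal to $26$, the dimension at $(h,\lambda)=(1,3)$. Near $(1,3)$ we have $\lambda\neq 1$, since $3\neq 1$ in characteristic $7$.
\begin{itemize}
\item By Proposition \ref{prop:classification}(2), every fibre with $h\neq1$ has dimension $22$. So any flat formal family must satisfy $h\equiv 1$: writing $h=1+\eta$ with $\eta$ nonzero would give generic fibres over the fraction field of dimension $22\ne26$.
\item Conversely, by Proposition \ref{prop:classification}(1), on $h=1$ the algebras form a free family of rank $26$ over $R=\mathbf{k}[\lambda^{\pm1},(\lambda-1)^{-1}]$, with constant Cartan matrix. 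Base change along $R\to\mathbf{k}[[\tau]]$, $\lambda\mapsto3+\tau$ (legitimate because $3+\tau$ and $2+\tau$ are units in $\mathbf{k}[[\tau]]$), yields freeness of rank $26$ for \eqref{eq:formal-family}.
\end{itemize}
Hence the flat locus is $h=1$, it is one-dimensional modulo rescaling, and it is parametrized by $\lambda$.

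\emph{Main obstacle.} The delicate point is the first bullet: a flat formal family with $h$ a nonconstant series. I would handle it by running the Diamond-lemma argument of Proposition \ref{prop:classification}(2) over $\mathbf{k}[[\tau]][(h-1)^{-1}]$, that is, over the generic point, and then using that rank is upper semicontinuous but constant in a flat family. If $h-1$ is a nonzero non-unit, this argument gives torsion rather than outright dimension $22$. I would then argue via the ambiguity $svu/vus$, which gives $(h-1)sss=0$ together with $sss+sxy=0$. Flatness means $(h-1)$-torsion-freeness, so this forces $sss=0$ in the family. But $sss=d_1^3$ is a basis element at $\tau=0$, which is a contradiction.
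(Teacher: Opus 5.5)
Your proposal is correct and follows essentially the paper's route. Like the paper, you reduce to the normal form $A_{h,\lambda}$, base-change the free family of Proposition~\ref{prop:classification}(1) to $\lambda=3+\tau$, and use the $svu/vus$ overlap to get an $(h-1)$-torsion relation on an element that is nonzero in the original fibre. The only difference is minor: the paper uses $(h-1)sxy=0$ together with ``free over a local base, so an element with nonzero reduction has zero annihilator,'' which also covers non-domain bases such as the dual numbers in the subsequent remark. You instead use $(h-1)d_1^3=0$ and torsion-freeness, which requires the base to be a domain such as $\mathbf{k}[[\tau]]$.
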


\begin{proof}
At the original point, both $\lambda=3$ and $\lambda-1=2$ are units, so every
formal variation of $\lambda$ remains in the open set of Proposition
\ref{prop:classification}(1).  The overlap computation used in part (2) gives the relation
\[
 (h-1)sxy=0
\]
over any coefficient base.  In the original fiber, $sxy=-s^3$ is nonzero.
If the deformation algebra is flat over a local base, it is free, and an
element whose reduction is nonzero has zero annihilator.  Hence $h-1=0$.
After imposing $h=1$, the only remaining renormalization invariant is
$\lambda$.  Formula \eqref{eq:formal-family} is the specialization
$\lambda=3+\tau$.
\end{proof}

\medskip
\noindent
\textbf{Remark.}
For $\lambda\in\mathbf{k}^\times\setminus\{1\}$, the parameter $\lambda$ in fact
also distinguishes the resulting algebras up to arbitrary algebra isomorphism,
not merely up to diagonal renormalization of the arrows.  Indeed, the positive
path-length part is the Jacobson radical, so the path-length grading is the
radical grading.  Any algebra isomorphism therefore induces a graded
isomorphism.  The diagonal Cartan entries $3,6,5$ force such an isomorphism to
fix the three vertices, and every degree-one arrow space is one-dimensional.
Thus the induced graded isomorphism acts by arrow rescalings, under which
$\lambda$ is invariant.

For completeness, over the dual numbers $\mathbf{k}[\eta]/(\eta^2)$ write
\[
 \alpha=1+A\eta,\quad \beta=1+B\eta,\quad
 \gamma=1+G\eta,\quad \delta=3+D\eta,\quad
 \varepsilon=3+E\eta.
\]
Then flatness is equivalent to
\[
 A+B=0,
\]
and, modulo infinitesimal renormalization, the unique deformation parameter is
\[
 \lambda-3=(E+6A-3G)\eta.
\]
The coefficient $D$ is pure gauge: it is removed by rescaling $a_1$ and $b_1$.

\medskip
\noindent
\textbf{Conclusion.}
Changing the coefficients does give a nontrivial flat deformation, but not with
all coefficients varying independently.  After quotienting by renormalization,
there is exactly one coefficient modulus near the original point.  It may be
taken to be the coefficient of $b_2a_2$ in the last relation, while the product
of the two coefficients in the transport relations must remain equal to $1$. \qed


\begin{thebibliography}{EGNO}
\bibitem
[CEO]{CEO} 
K.~Coulembier, P.~Etingof, V.~Ostrik. On Frobenius exact symmetric tensor categories.
{\em Annals of Mathematics} {\bf 197} (2023), no. 3, p. 1235–1279. 

\bibitem
[D]{D} 
P. Deligne. Categories Tannakiennes. In ``The Grothendieck Festschrift, Vol.II,” {\em Progress in Mathematics} {\bf 87} (1990), 111–195.

\bibitem
[DF]{DF}
P. Donovan and M.R. Freislich. The Representation Theory of Finite Graphs and Associated
Algebras. {\em Carleton Mathematical Lecture Notes} {\bf No. 5}. Carleton University, Ottawa, ON, 1973.
iii+83 pp.

\bibitem
[DR]{DR}
V. Dlab and C.M. Ringel. Indecomposable representations of graphs and algebras. {\em Mem. Amer.
Math. Soc.} {\bf 6} (1976), no. 173.

\bibitem
[E]{E} 
P. Etingof. Koszul duality and the PBW theorem in symmetric tensor categories in positive characteristic. {Advances in Mathematics} {\bf 327} (2018), 128–160.

\bibitem
[EG]{EG} 
P. Etingof and S. Gelaki. Quasisymmetric and unipotent tensor categories. 
{\em Mathematical Research Letters} {\bf 15} (2008), no. 5, 857–866.

\bibitem
[ES]{ES} 
P. Etingof and A. Snowden. The Drinfeld center of an oligomorphic tensor category. {\em arXiv:2604.00290}.

\bibitem
[EOV]{EOV} 
P. Etingof, V. Ostrik, and S. Venkatesh. Computations in symmetric fusion categories in characteristic $p$. 
{\em International Mathematics Research Notices} (2017) (2), 468–489.

\bibitem
[EGNO]{EGNO} 
P. Etingof, S. Gelaki, D. Nikshych, and V. Ostrik. Tensor categories. {\em AMS Mathematical Surveys and Monographs book series} {\bf 205} (2015), 362 pp.

\bibitem 
[G]{G1} 
S. Gelaki. Exact factorizations and extensions of fusion categories. {\em Journal of Algebra} {\bf 480} (2017), 505–518.

\bibitem[GK]
{GK} S.~Gelfand and  D.~Kazhdan.
Examples of tensor categories.
{\em Inventiones Mathematicae} \textbf{109} (1992), no. 3, 595–617.

\bibitem
[GM]{GM} G.~Georgiev and O.~Mathieu. 
Fusion rings for modular representations of Chevalley groups.
{\em Contemporary Mathematics} \textbf{175} (1994), 89–100.

\bibitem
[J]{Ja} G.~J.~Janusz. Indecomposable modules for finite groups. {\em Ann. of Math.} 
(2) 89 (1969), 209-241.

\bibitem
[LZ]{LZ} Z. Liu and S. Zhu. Centers of braided tensor categories. {\em J. Algebra} {\bf 614} (2023), 115–153.

\bibitem
[MOV]{MOV} Quiver varieties and Lusztig’s algebra. {\em Advances in Mathematics} {\bf 203} (2006) 514-536.

\bibitem 
[O]{O} 
V. Ostrik. On symmetric fusion categories in positive characteristic. {\em Selecta Mathematica} {\bf 26} (3), 36 (2020).
		
\bibitem 
[S]{S} 
K. Shimizu. The monoidal center and the character algebra. {\em Journal of Pure and Applied Algebra} 
{\bf 221} (2017) 2338–2371.

\bibitem 
[V1]{V} 
S. Venkatesh. Representations of general linear groups in the Verlinde
category. {\em arXiv: 2203.03158}.

\bibitem 
[V2]{V2} 
S. Venkatesh. Harish-Chandra pairs in the Verlinde
category in positive characteristic. {\em International Mathematics Research Notices} (2023) no. 18, 15475–15536.

\bibitem
[Z]{Z} A.~Zimmermann. Representation theory. A homological algebra point of view. {\em Algebra and Applications, 19}. Springer, Cham, 2014. xx+707 pp.

\end{thebibliography}
\end{document}